\def\Xint#1{\mathchoice
{\XXint\displaystyle\textstyle{#1}}%
{\XXint\textstyle\scriptstyle{#1}}%
{\XXint\scriptstyle\scriptscriptstyle{#1}}%
{\XXint\scriptscriptstyle\scriptscriptstyle{#1}}%
\!\int}
\def\XXint#1#2#3{{\setbox0=\hbox{$#1{#2#3}{\int}$ }
\vcenter{\hbox{$#2#3$ }}\kern-.6\wd0}}
\def\dashint{\Xint-}
\newtheorem{theorem}{Theorem}[section]
\newtheorem{corollary}[theorem]{Corollary}
\newtheorem{lemma}[theorem]{Lemma}
\newtheorem{prop}[theorem]{Proposition}
\newtheorem{rem}[theorem]{Remark}
\newtheorem{claim}[theorem]{Claim}
\numberwithin{equation}{section}
\author{Anudeep K. Arora}
\address{Department of Mathematics, Statistics \& Computer Science, University of Illinois at Chicago, Chicago, IL 60607, USA}
\email{anudeep@uic.edu}
\author{Oscar Ria\~no }
\address{Department of Mathematics \& Statistics, Florida International University, Miami, FL 33199, USA}\email{orianoca@fiu.edu}
\author{Svetlana Roudenko}
\address{Department of Mathematics \& Statistics, Florida International University, Miami, FL 33199, USA}\email{sroudenko@fiu.edu}
\date{}
\title[Well-posedness in generalized Hartree, $p<2$ ]{Well-posedness in weighted spaces\\ for the generalized Hartree equation with $p<2$}
\subjclass[2010]{Primary: 35Q55, 35A01, 35B40; secondary: 42B37}
\keywords{Hartree equation, nonlocal potential, convolution nonlinearity, well-posedness, global existence, scattering, blow-up}
\begin{document}

\begin{abstract} 
We investigate the well-posedness in the generalized Hartree equation $iu_t + \Delta u  + (|x|^{-(N-\gamma)} \ast |u|^p)|u|^{p-2}u=0$, $x \in \mathbb{R}^N$, $0<\gamma<N$, for low powers of  nonlinearity, $p<2$. 
We establish the local well-posedness for a class of data in weighted Sobolev spaces, following ideas of Cazenave and Naumkin \cite{CazNaum2016}. This crucially relies on the boundedness of the Riesz transform in weighted Lebesgue spaces. 
As a consequence, we obtain a class of data that exists globally, moreover, scatters in positive time. Furthermore, in the focusing case in the $L^2$-supercritical setting we obtain a subset of locally well-posed data with positive energy, which blows up in finite time.  
\end{abstract}

\maketitle

\section{Introduction}

We consider the initial value problem associated to the generalized Hartree (gHartree), or Schr\"odinger-Hartree equation, 
\begin{equation}\label{gHartree}
\left\{\begin{aligned}
    &i u_t+\Delta u+\mu\Big(\frac{1}{|x|^{N-\gamma}}\ast |u|^{p}\Big)|u|^{p-2}u=0, \, \,  x \in \mathbb{R}^N, \, \, t\in \mathbb{R},\\
    & u(x,0)=u_0(x), 
\end{aligned}\right.
\end{equation}
where $0<\gamma<N$, $p<2$, $\mu\in \mathbb{C}\setminus\{0\}$ and $u=u(x,t)$ is a complex-valued function.

For general nonlinearity $p\geq 2$, the local well-posedness in $H^1$ 
%$H^1(\mathbb{R}^N)$ 
was established in \cite{2019gHAnudRoud}. In this regard, our aim is to extend the well-posedness theory for nonlinearity with $p<2$. % in the range $1< p <2$. 
Such low powers appear, for example, when studying collapse or finite time blow-up in the focusing generalized Hartree equation, e.g., see \cite{YRZ2020}. The equation \eqref{gHartree} is a generalization of the standard Hartree equation with $p=2$, which shows in a number of physical models, for further details see the introduction in \cite{2019gHAnudRoud} or review \cite{ARY2020}. A generalized version of the Hartree equation allows more flexibility in approaches, thus, we are interested in the powers of nonlinearities when $1<p<2$. The typical methods for well-posedness do not work in these low nonlinearities (see discussion on that in \cite{CazNaum2016}), however, using the weighted Sobolev spaces, we are able to obtain the local well-posedness in a certain range of $p<2$. The range that we obtain is not optimal, but rather technical, since this is the first such study in the context of the Hartree-type equation, where we use well-known in harmonic analysis weighted estimates for the Riesz transform. In fact, this approach can be helpful in establishing well-posedness in equations with a potential that can be expressed as a Calderon-Zygmund operator.  
Inspired by the results in \cite{CazNaum2016} (see also \cite{CazHauNaum2020,CazNaum2018,LinaresMiyaGus,LinaresI,LinaresII,Miyazaki2020}), we %aim to 
introduce a class of initial data, which guarantees existence of local solutions of \eqref{gHartree} for nonlinearity with power $p<2$. Main difficulties arise in our analysis due to the presence of the Riesz potential operator and the lack of regularity of the term $\big(\frac{1}{|x|^{N-\gamma}}\ast |u|^{p}\big)|u|^{p-2}u$ when $p<2$.      
Before stating our results, we recall a few invariances %and symmetries, 
and introduce some notation.

The equation \eqref{gHartree} formally conserves several quantities, in particular, if $\mu \in \mathbb{R}\setminus\{0\}$, solutions of \eqref{gHartree} satisfy the \emph{mass conservation} 
\begin{equation}\label{masscon}
M[u(t)]=\int_{\mathbb{R}^N} |u(x,t)|^2\, dx=M[u_0],
\end{equation}
the \emph{energy conservation}
\begin{equation}\label{energycon}
E[u(t)]=\frac{1}{2}\int_{\mathbb{R}^N} |\nabla u(x,t)|^2\, dx-\frac{\mu}{2p}\int_{\mathbb{R}^N}\Big(\frac{1}{|\cdot|^{N-\gamma}}\ast |u(\cdot,t)|^{p}\Big)(x,t)|u(x,t)|^p\, dx=E[u_0],
\end{equation}
and the \emph{momentum conservation}
\begin{equation}\label{momem}
P[u(t)]=\operatorname{Im} \Big( \int_{\mathbb{R}^N} \overline{u(x,t)}\nabla u(x,t) \, dx \Big)=P[u_0].
\end{equation}

%Before stating our results, let us introduce some notation. 
Next, we fix $\frac{4}{3}<p<2$ and take $0<\gamma<\frac{N(3p-4)}{2p}$. We choose $m\in \mathbb{R}^{+}$ such that 
\begin{equation}\label{condmainT1}  
\max\Big\{ \frac{2\gamma+N}{4(p-1)},\frac{N}{2}\Big\}<m<\frac{N-2\gamma}{2(2-p)}.
\end{equation}
Additionally, we consider two positive integers $M_0$ and $M$ satisfying
\begin{equation}\label{condmainT2}
\begin{aligned}
M_0> \max\Big\{\frac{(N-\gamma)(2mp-N)}{4m(p-1)-N},N+m \Big\}
\end{aligned}
\end{equation}
and 
\begin{equation}\label{condmainT3}   
M > \max\Big\{M_0-N+2\lfloor \frac{N}{2}\rfloor+2,4\lfloor \frac{N}{2}\rfloor+5+m\Big\}.
\end{equation}
(Here, $\lfloor x \rfloor$ denotes the floor function, i.e., the greatest integer that is less than or equal to $x$).

We define the space $\mathfrak{X}=\mathfrak{X}(m,M,M_0)$ as follows
\begin{equation}\label{spacedef} 
\begin{aligned}
\mathfrak{X}=\big\{f \in & H^{M+M_0-N}(\mathbb{R}^N) : \\
&\langle x \rangle^{m}\partial^{\alpha}f \in L^{\infty}(\mathbb{R}^N), \, \, \text{ for each multi-index } \, \, |\alpha|\leq \lfloor \frac{N}{2}\rfloor, \\
& \langle x \rangle^{m}\partial^{\alpha}f \in L^{2}(\mathbb{R}^N), \, \,\text{ for each multi-index } \, \, \lfloor \frac{N}{2}\rfloor  <|\alpha|\leq M \big\},
 \end{aligned}
\end{equation}  
equipped with the norm 
\begin{equation}\label{normspace}   
\|f\|_{\mathfrak{X}}=\sum_{|\alpha|\leq \lfloor \frac{N}{2}\rfloor }\|\langle x \rangle^{m}\partial^{\alpha}f\|_{L^{\infty}_x}+\sum_{\lfloor \frac{N}{2}\rfloor< |\alpha|\leq M}\|\langle x \rangle^{m}\partial^{\alpha}f\|_{L^{2}_x}+\sum_{M<|\alpha|\leq M+M_0-N}\|\partial^{\alpha}f\|_{L^{2}},
\end{equation}
where $\langle x \rangle=(1+|x|^2)^{1/2}$. An example of such a space in 1D is given in \eqref{functspa1D}. 
\medskip

Our %first 
main result is the following local well-posedness.

\begin{theorem}\label{mainTHM}
Let $\frac{4}{3}<p<2$, $0<\gamma<\frac{N(3p-4)}{2p}$ and $\mu \in \mathbb{C}\setminus\{0\}$. Assume \eqref{condmainT1}-\eqref{condmainT3} and let $\mathfrak{X}$ be defined by \eqref{spacedef} and \eqref{normspace}. %{condi1}. 
If $u_0 \in \mathfrak{X}$ with
\begin{equation}\label{condi1}
\|u_0\|_{\mathfrak{X}}=\eta,
\end{equation}
and 
\begin{equation}\label{condi2}
\inf_{x\in \mathbb{R}^N}|\langle x \rangle^m u_0(x)|\geq \lambda>0,
\end{equation}
then there exist $T=T(m,M,M_0,\eta, \lambda)>0$ and a unique solution of $u\in C([0,T];\mathfrak{X})$ of the initial value problem \eqref{gHartree}. Moreover, the map data-solution
\begin{equation}
u_0\mapsto u(\cdot,t)
\end{equation}
from a neighborhood of the datum $u_0\in \mathfrak{X}$ satisfying \eqref{condi1}-\eqref{condi2} into the class $ C([0,T];\mathfrak{X})$ is locally continuous.
\end{theorem}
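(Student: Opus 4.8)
The plan is to recast the initial value problem \eqref{gHartree} via Duhamel's formula as the fixed-point equation
\[
\Phi(u)(t) = e^{it\Delta}u_0 + i\mu\int_0^t e^{i(t-s)\Delta}\Big[\big(\tfrac{1}{|x|^{N-\gamma}}\ast|u|^p\big)|u|^{p-2}u\Big](s)\,ds,
\]
and to solve it by a contraction argument in a complete metric subset of $C([0,T];\mathfrak{X})$. The essential structural point, following \cite{CazNaum2016}, is that the admissible set must encode not only the norm bound $\sup_{t\in[0,T]}\|u(t)\|_{\mathfrak{X}}\leq 2\eta$ but also the pointwise lower bound $\inf_{x,\,t}|\langle x\rangle^m u(x,t)|\geq \lambda/2$ inherited from \eqref{condi2}. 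This lower bound is exactly what tames the low-power nonlinearity: on the admissible set $|u|^{p-2}\leq (\lambda/2)^{p-2}\langle x\rangle^{m(2-p)}$, so the otherwise singular factor $|u|^{p-2}$ becomes a controlled power of the weight and the nonlinearity behaves as a smooth map along the iteration.

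First I would prove the core nonlinear estimate $\|F[u]\|_{\mathfrak{X}}\lesssim_{\lambda} \|u\|_{\mathfrak{X}}$, where $F[u]=\big(\tfrac{1}{|x|^{N-\gamma}}\ast|u|^p\big)|u|^{p-2}u$. This separates into two mechanisms. For the nonlocal (Riesz-potential) factor I would use the Hardy--Littlewood--Sobolev inequality together with the weighted $L^2$ and $L^\infty$ bounds furnished by \eqref{normspace}; the hypotheses $0<\gamma<\tfrac{N(3p-4)}{2p}$ and the lower threshold on $m$ in \eqref{condmainT1} are precisely what force convergence of the relevant integrals and balance the weight exponents. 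When distributing derivatives $\partial^\alpha$ (for $|\alpha|$ up to $M+M_0-N$) across $F[u]$ via the Leibniz and Fa\`a di Bruno rules, each derivative falling on the convolution factor produces a derivative of the Riesz potential, i.e.\ a Riesz-transform--type singular integral, which I would control in weighted Lebesgue spaces using the $A_p$-weighted boundedness of the Riesz transform; this is the harmonic-analytic input highlighted in the abstract. Each derivative falling on $|u|^{p-2}u$ generates a negative power $|u|^{p-2-k}$, which the lower bound $|u|\geq \tfrac{\lambda}{2}\langle x\rangle^{-m}$ converts into a positive power of $\langle x\rangle$ that the decay built into $\mathfrak{X}$ must absorb; the upper threshold $m<\tfrac{N-2\gamma}{2(2-p)}$ in \eqref{condmainT1} and the gap conditions \eqref{condmainT2}--\eqref{condmainT3} relating $M_0$, $M$, $m$ and $\lfloor N/2\rfloor$ are what make this bookkeeping close.

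Next I would show that $\Phi$ preserves the lower bound for short time. Splitting $\Phi(u)(t)-u_0 = (e^{it\Delta}-\mathrm{Id})u_0 + (\text{Duhamel term})$, the Duhamel term is $O(t)$ in the weighted $L^\infty$ norm by the estimate above, while $\|\langle x\rangle^m(e^{it\Delta}-\mathrm{Id})u_0\|_{L^\infty}\to0$ as $t\to0$ follows from the smoothness and weighted decay of $u_0\in\mathfrak{X}$ together with the propagator commutation $x\,e^{is\Delta}=e^{is\Delta}(x-2is\nabla)$, which trades the weight against the smoothing operator $J(s)=x-2is\nabla$ acting on the regular datum. Choosing $T=T(m,M,M_0,\eta,\lambda)$ small then keeps $\langle x\rangle^m|\Phi(u)|\geq \lambda/2$, so $\Phi$ maps the admissible set into itself. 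A parallel difference estimate $\|\Phi(u)-\Phi(v)\|\leq \tfrac12\|u-v\|$ — again using the lower bound to make $|u|^{p-2}u-|v|^{p-2}v$ Lipschitz on the set — gives the contraction, hence existence and uniqueness in $C([0,T];\mathfrak{X})$; applying the same difference estimates to two data yields the local continuity of the data-to-solution map.

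I expect the principal obstacle to be the weighted estimate of $F[u]$ in $\mathfrak{X}$, where one must simultaneously (i) route every derivative landing on the nonlocal factor through the weighted boundedness of the Riesz transform, (ii) absorb the negative powers of $|u|$ produced by differentiating $|u|^{p-2}u$ using the lower bound, and (iii) keep the resulting powers of $\langle x\rangle$ within the decay budget encoded in $\mathfrak{X}$. The tightness of the parameter windows \eqref{condmainT1}--\eqref{condmainT3} is a direct signal that this three-way balancing is delicate, and it constitutes the technical heart of the argument; by comparison, the contraction and continuous-dependence steps are routine once these estimates are established.
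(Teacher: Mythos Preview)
Your proposal is correct and follows essentially the same approach as the paper: a contraction argument for the Duhamel map on a ball in $C([0,T];\mathfrak{X})$ that additionally carries the lower bound $\inf_{x,t}|\langle x\rangle^m u|\geq \lambda/2$, with the nonlinear estimate assembled from (i) weighted Hardy--Littlewood--Sobolev/Riesz-potential bounds for the convolution factor, (ii) $A_p$-weighted Riesz-transform boundedness when derivatives land on the nonlocal factor, and (iii) the Fa\`a di Bruno expansion of $|u|^{p-2}u$ combined with the lower bound to trade negative powers of $|u|$ for powers of $\langle x\rangle$. The only minor deviation is that the paper handles the linear smallness $\|\langle x\rangle^m(e^{it\Delta}-\mathrm{Id})u_0\|_{L^\infty}=O(t)$ via a Taylor expansion of $e^{it\Delta}$ in $t$ (Proposition~\ref{propinfnorm}) rather than the commutation $x\,e^{is\Delta}=e^{is\Delta}(x-2is\nabla)$ you invoke, and the upper bound on $m$ in \eqref{condmainT1} is used in the paper primarily to place $\langle x\rangle^{m(2-p)}$ in the admissible weight class for the Riesz potential and Riesz transform (rather than purely for weight absorption), but these are cosmetic differences within the same overall scheme.
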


\begin{rem}
\normalfont It is worth to emphasize that the results for the equations studied in \cite{CazHauNaum2020,CazNaum2016,CazNaum2018,LinaresMiyaGus,LinaresI,LinaresII,Miyazaki2020} deal with weights of arbitrary size only limited by lower bounds. Thus, in these references, it is more natural to consider weights with integer powers. In our case, we obtain our results with fractional weights, where 
we also need an upper bound in
%one can exploit the full range of 
the condition \eqref{condmainT1}. %On the other hand, 
The upper constrain on the weight power $m$ stated in \eqref{condmainT1} is required to obtain weighted estimates for the term $\frac{1}{|x|^{N-\gamma}}\ast |u|^{p}$ provided by the nonlinearity in \eqref{gHartree}. This restriction comes from the boundedness of the Riesz potential in weighted $L^{p}(\mathbb{R}^N)$ spaces (see Proposition \ref{propweightriesz} below).
\end{rem}

The two %straightforward 
consequences of this local well-posedness result are the global existence and scatering in the spirit of Cazenave and Naumkin \cite{CazNaum2016} in Theorem \ref{scatres} and blow-up in finite time in Theorem \ref{blowupcriteria} and Corollary \ref{blowupcriteriacor}.

\begin{theorem}\label{scatres}
Let $\frac{4}{3}<p<2$, $0<\gamma<\min\{\frac{N(3p-4)}{2p},N(p-1)-1\}$ and $\mu \in \mathbb{C}\setminus\{0\}$. Consider $m\in \mathbb{R}^{+}$, $M_0, M \in \mathbb{Z}^{+}$ satisfying \eqref{condmainT1}-\eqref{condmainT3}. Suppose that $u_0=e^{\frac{ib|x|^2}{4}}v_0$, $b>0$, and $v_0\in \mathfrak{X}$ satisfies \eqref{condi2}. If $b$ is sufficiently large, then for any $0\leq s<\frac{2m-N}{2}$ there exists a global solution $u$ of \eqref{gHartree} in  the class
$$C\big([0,\infty);H^{s}(\mathbb{R}^N)\big)\cap L^{\infty}\big(\mathbb{R}^N;\langle x \rangle^{\frac{N}{2}}\, dx\, dt) \big).$$ 
Moreover, $u$ scatters, i.e., there exists $u_{+}\in H^{s}(\mathbb{R}^N)$ such that
\begin{equation*}
\lim_{\substack{t \to \infty \\ t>0}} \|e^{-it\Delta}u(t)-u_{+}\|_{H^s}=0.
\end{equation*}
In addition,
\begin{equation*}
\sup_{t>0} \, (1+t)^{\frac{N}{2}}\|u(t)\|_{L^{\infty}}<\infty.
\end{equation*}
\end{theorem}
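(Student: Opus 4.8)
The plan is to deduce Theorem \ref{scatres} from the local theory of Theorem \ref{mainTHM} by means of the pseudo-conformal (lens) transformation, turning the global-in-time problem for $u$ into a local-in-time problem for a rescaled unknown $v$ on a short interval whose length is controlled by $1/b$. Concretely, I would set
$$u(x,t)=(1+bt)^{-N/2}\,e^{\frac{ib|x|^2}{4(1+bt)}}\,v\Big(\frac{x}{1+bt},\frac{t}{1+bt}\Big),\qquad y=\frac{x}{1+bt},\quad s=\frac{t}{1+bt},$$
so that $u(x,0)=e^{\frac{ib|x|^2}{4}}v(x,0)$ matches the prescribed data with $v(\cdot,0)=v_0$, and the global range $t\in[0,\infty)$ corresponds to $s\in[0,1/b)$ since $1+bt=(1-bs)^{-1}$. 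Using the pseudo-conformal invariance of the free propagator together with the scaling of the Riesz-potential nonlinearity, a direct computation shows that $u$ solves \eqref{gHartree} if and only if $v$ solves
$$i v_s+\Delta v+\mu\,(1-bs)^{\,N(p-1)-\gamma-2}\Big(\frac{1}{|y|^{N-\gamma}}\ast|v|^{p}\Big)|v|^{p-2}v=0,\qquad v(\cdot,0)=v_0.$$

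First I would observe that the extra hypothesis $\gamma<N(p-1)-1$ of Theorem \ref{scatres} is exactly the statement that the exponent $k:=N(p-1)-\gamma-2$ satisfies $k>-1$, so the time weight $(1-bs)^{k}$ is integrable on $[0,1/b)$ with $\int_0^{1/b}(1-bs)^{k}\,ds=\frac{1}{b(k+1)}$. I would then revisit the fixed-point scheme used to prove Theorem \ref{mainTHM}, which carries over to the nonautonomous nonlinearity above because the only change in the Duhamel estimates is the insertion of the scalar factor $(1-bs)^{k}$ inside the time integral. Since $v_0\in\mathfrak X$ satisfies \eqref{condi2} and $|u_0|=|v_0|$, the data hypotheses of Theorem \ref{mainTHM} hold for $v_0$; moreover, because the total nonlinear weight $\frac{1}{b(k+1)}$ is small for large $b$, the contraction closes not on a short interval but on the entire interval $[0,1/b)$, yielding a solution $v\in C([0,1/b);\mathfrak X)$ with $\sup_{s}\|v(s)\|_{\mathfrak X}\lesssim\eta$ and, crucially, preserving the lower bound $\inf_x|\langle x\rangle^{m}v(x,s)|\ge\lambda'>0$ throughout. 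Undoing the transformation produces a global solution $u$ of \eqref{gHartree} on $[0,\infty)$.

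For the decay estimate I would simply read off $\|u(t)\|_{L^\infty}=(1+bt)^{-N/2}\|v(s)\|_{L^\infty}$; since the $|\alpha|=0$ term of the $\mathfrak X$-norm controls $\|v(s)\|_{L^\infty}$ uniformly and $(1+t)/(1+bt)\le1$ for $b\ge1$, this gives $\sup_{t>0}(1+t)^{N/2}\|u(t)\|_{L^\infty}<\infty$, and transporting the weighted $\mathfrak X$-bounds through the transformation places $u$ in the asserted space and fixes the admissible range $0\le s<\frac{2m-N}{2}$, the weight power $m$ being what limits the Sobolev regularity extractable from $v$. For scattering, writing $\mathcal{N}(w)=\big(\frac{1}{|\cdot|^{N-\gamma}}\ast|w|^{p}\big)|w|^{p-2}w$, I would use the Duhamel representation $e^{-it\Delta}u(t)=u_0+i\mu\int_0^t e^{-it'\Delta}\mathcal{N}(u(t'))\,dt'$ and show the integrand is integrable in $H^s$: the integrability of $(1-bs)^{k}$ (equivalently the decay furnished by the $(1+bt)^{-N/2}$ prefactor) makes $\int_0^\infty\|e^{-it'\Delta}\mathcal{N}(u(t'))\|_{H^s}\,dt'<\infty$, so the limit $u_{+}:=u_0+i\mu\int_0^\infty e^{-it'\Delta}\mathcal{N}(u(t'))\,dt'$ exists in $H^s$ and $\|e^{-it\Delta}u(t)-u_{+}\|_{H^s}\to0$.

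The main obstacle I anticipate is the step of solving the transformed equation on the \emph{full} interval $[0,1/b)$ rather than on a short time interval, and in particular propagating the pointwise lower bound \eqref{condi2} all the way up to $s=1/b$. The latter is essential because the factor $|v|^{p-2}$ is singular wherever $v$ vanishes when $p<2$, so any loss of the lower bound would destroy the nonlinear estimates; both difficulties are resolved by choosing $b$ large, which makes the accumulated nonlinear weight $\tfrac{1}{b(k+1)}$ arbitrarily small and keeps $v(s)$ within a small ball around $v_0$ uniformly in $s\in[0,1/b)$. A secondary technical point is verifying that the weighted estimates underlying Theorem \ref{mainTHM} remain uniform as $s\to1/b$ despite the singular time weight, which again reduces to the integrability guaranteed by $\gamma<N(p-1)-1$.
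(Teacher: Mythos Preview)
Your plan is correct and matches the paper's proof almost step for step: the pseudo-conformal transformation, the transformed equation with time weight $(1-bs)^{N(p-1)-\gamma-2}$, the observation that $\gamma<N(p-1)-1$ makes this weight integrable with mass $\frac{1}{b(N(p-1)-1-\gamma)}$, and the rerun of the Theorem~\ref{mainTHM} contraction on $\mathfrak X_{b^{-1}}(R,\lambda)$ with smallness supplied by large $b$ are exactly what the paper does (their Proposition~\ref{propglobal}). The $L^\infty$ decay is also read off in the same way.

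The one place where your outline diverges slightly is the scattering step. You propose to work with the Duhamel formula for $u$ in the original variables and show $\int_0^\infty\|\mathcal N(u(t'))\|_{H^s}\,dt'<\infty$. The paper instead invokes the Cazenave--Weissler identity
\[
e^{-it\Delta}u(x,t)=e^{\frac{ib|x|^2}{4}}\,e^{-\frac{t}{1+bt}\Delta}v\Big(x,\frac{t}{1+bt}\Big),
\]
defines $u_+=e^{\frac{ib|x|^2}{4}}e^{-\frac{1}{b}\Delta}v(\tfrac{1}{b})$, and estimates the difference as a tail integral in the $v$-variables. The advantage of the paper's route is that the oscillatory phase $e^{\frac{ib|x|^2}{4(1+bt)}}$ is removed from the integrand before taking $H^s$ norms, so one only differentiates the fixed phase $e^{ib|x|^2/4}$ once (handled by \eqref{prelimneq} and Proposition~\ref{derivexp}) and the uniformly bounded $\mathfrak X$-norms of $N(v)$; your direct approach would have to differentiate the time-dependent phase inside $\mathcal N(u)$, which is doable but noisier. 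Either way the constraint $0\le s<\frac{2m-N}{2}$ enters exactly where you say, through the need for $\langle x\rangle^{s-m}\in L^2$ when converting the weighted $L^\infty$ control on $v$ into $H^s$ bounds.
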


The result in \cite[Theorem 1.3]{AndySvetlan2} establishes a blow-up criterion for solutions of the focusing generalized Hartree equation \eqref{gHartree}, $p\geq 2$. Following these ideas (see also \cite{DucRoud,Holmer_2010,PavelI,PavelII}), we can extend this conclusion to solutions of \eqref{gHartree}, $p<2$, in the $L^2$-supercritical case ($s_c >0$) determined by Theorem \ref{mainTHM}. Here, the critical scaling index $s_c$, determined from the scaling invariance of this equation, is defined as
\begin{equation}\label{critiindex}
s_c=\frac{N}{2}-\frac{\gamma+2}{2(p-1)}.
\end{equation}
We also define the variance
\begin{equation}\label{variance}
V(t)=\int_{\mathbb{R}^N} |x|^2|u(x,t)|^2 \, dx.
\end{equation}
We note that the existence of blow-up in finite time for negative energy and finite variance in the $L^2$-critical ($s_c=0$) and $L^2$-supercritical ($s_c>0$) cases extends automatically once the local well-posedness is available for solutions in $H^1$. Here, we consider initial data with positive energy.

\begin{theorem}\label{blowupcriteria}
Let $\max\{\frac{N+2}{N},\frac{4}{3}\}<p<2$, $0<\gamma<\min\{N(p-1)-2,\frac{(N+2)(p-1)-2}{2},\frac{N(3p-4)}{2p}\}$ and $\mu > 0$. 
%\in \mathbb{R}\setminus\{0\}$. 
We take
\begin{equation}\label{condiblow}
\max\Big\{\frac{N+2}{2},\frac{2\gamma+N}{4(p-1)}\Big\}< m<\frac{N-2\gamma}{2(2-p)},
\end{equation} 
and $M_0,M \in \mathbb{Z}^{+}$ verifying \eqref{condmainT2} and \eqref{condmainT3}. Let $u_0\in \mathfrak{X}$ satisfying  \eqref{condi2}.  Assume $E[u]>0$. The following is a sufficient condition for the blow-up in finite time for the solutions to the gHartree equation \eqref{gHartree} with initial data $u_0$ in the mass-supercritical case ($s_c>0$ as in \eqref{critiindex}):
\begin{equation}\label{blowupcondi}
\begin{aligned}
\frac{\partial_{t}V(0)}{\omega_c M[u_0]}<4\sqrt{2}F\Big(\frac{E[u_0]V(0)}{(\omega M[u_0])^2}\Big),
\end{aligned}
\end{equation}
where $\omega_c^2=\frac{N^2(N(p-2)+N-\gamma-2)}{8(N(p-2)+N-\gamma)}$ and the function $F$ is defined as (here, $k_{c}=s_c(p-1)$)
\begin{equation}\label{eqblowupcondi}
F(x)=\left\{\begin{aligned}
&\sqrt{\frac{1}{k_c x^{k_{c}}}+x-\frac{1+k_{c}}{k_{c}}}, \, \, \text{ if } 0<x<1\\
&-\sqrt{\frac{1}{k_{c}x^{k_{c}}}+x-\frac{1+k_{c}}{k_{c}}}, \, \, \text{ if } x\geq 1.
\end{aligned} \right.
\end{equation}
\end{theorem}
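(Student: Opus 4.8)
The plan is to run a virial (variance) argument for the solution $u \in C([0,T^*);\mathfrak{X})$ furnished by Theorem \ref{mainTHM} on its maximal forward interval, adapting the $H^1$ blow-up criterion of \cite{AndySvetlan2} (see also \cite{Holmer_2010,DucRoud}) to the present weighted, low-regularity setting. The first observation is that the lower bound $m>\frac{N+2}{2}$ imposed in \eqref{condiblow} is exactly what makes the variance $V(t)$ in \eqref{variance} finite: the $|\alpha|=0$ part of the $\mathfrak{X}$-norm controls $\langle x\rangle^{m}u\in L^\infty$, whence $|x|^2|u|^2\lesssim |x|^2\langle x\rangle^{-2m}$, which is integrable precisely when $2m-2>N$. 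I would first record that $t\mapsto V(t)$ is $C^2$ on $[0,T^*)$ and establish the virial identities.

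\emph{Virial identities.} The first derivative is $V'(t)=4\,\mathrm{Im}\int_{\mathbb{R}^N}\bar u\,x\cdot\nabla u\,dx$; the nonlocal term drops out because $\mu>0$ makes the product $\bar u\,(\frac{1}{|x|^{N-\gamma}}\ast|u|^p)|u|^{p-2}u$ real. For the second derivative, the mass-preserving dilation $u^\lambda=\lambda^{N/2}u(\lambda\,\cdot)$ acts on the potential-energy term with homogeneity $Np-N-\gamma=2(k_c+1)$ coming from the Hardy--Littlewood--Sobolev scaling of the Riesz potential, so that energy conservation gives
\[
V''(t)=8\|\nabla u(t)\|_{L^2}^2-16(k_c+1)\Big(\tfrac12\|\nabla u(t)\|_{L^2}^2-E[u_0]\Big)=-8k_c\|\nabla u(t)\|_{L^2}^2+16(k_c+1)E[u_0],
\]
with $k_c=s_c(p-1)>0$ by the supercriticality constraint $\gamma<N(p-1)-2$.

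\emph{Reduction to an ODE.} Combining $\mathrm{Re}\int\bar u\,x\cdot\nabla u=-\tfrac N2 M[u_0]$ with $V'(t)=4\,\mathrm{Im}\int\bar u\,x\cdot\nabla u$ and Cauchy--Schwarz yields the sharp kinetic lower bound $\frac{N^2M[u_0]^2}{4}+\frac{(V')^2}{16}\le V\|\nabla u\|_{L^2}^2$. Inserting this into the virial identity produces the closed differential inequality
\[
V''(t)\le -\frac{2k_cN^2M[u_0]^2}{V(t)}-\frac{k_c\,(V'(t))^2}{2\,V(t)}+16(k_c+1)E[u_0].
\]
Writing $R=\sqrt{V}$ and integrating the corresponding extremal ODE gives the first integral $(R')^2=8E[u_0]-N^2M[u_0]^2R^{-2}+B\,R^{-2(k_c+1)}$. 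One checks that the sharp constant $\omega_c^2=\frac{N^2k_c}{8(k_c+1)}$ (equal to the quantity in the statement, since $N(p-2)+N-\gamma-2=2k_c$ and $N(p-2)+N-\gamma=2(k_c+1)$) together with the profile $F$ form precisely the separatrix of this planar flow between bounded and collapsing trajectories, and that condition \eqref{blowupcondi} is exactly the assertion that the normalized datum $(V(0),V'(0))$ lies on the collapsing side. To upgrade the inequality to a rigorous comparison I would show that $\Psi(t)=R^{2(k_c+1)}\big[(R')^2+N^2M[u_0]^2R^{-2}-8E[u_0]\big]$ is nondecreasing along the actual flow whenever $R'<0$; this forces $R(t)\to 0$, equivalently $\|\nabla u(t)\|_{L^2}\to\infty$, at some finite $T^*<\infty$.

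The main obstacle will be the first step rather than the ODE analysis: rigorously justifying that the weighted, low-regularity solution of Theorem \ref{mainTHM} has finite, twice-differentiable variance obeying the virial identities. Because $p<2$ the nonlinearity lacks the smoothness used in the standard $H^1$ derivation, so I would regularize---truncating with cutoffs $\chi(x/n)$ and mollifying the data---prove the identities for the approximations using the $\mathfrak{X}$-control of weighted derivatives, and pass to the limit, the bound $m>\frac{N+2}{2}$ providing the uniform control of the weighted tails needed to justify the limit. Once the virial identity is secured, the phase-plane/comparison analysis follows the pattern of \cite{AndySvetlan2,Holmer_2010,DucRoud}, the only genuinely new bookkeeping being the fractional exponent $k_c$ and the matching of constants to $\omega_c$ and to the separatrix $F$.
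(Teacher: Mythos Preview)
Your approach is essentially the same as the paper's: verify that the $\mathfrak{X}$-solution has finite variance (via $m>\tfrac{N+2}{2}$) and finite potential energy, establish the virial identities $V_t=4\,\mathrm{Im}\int \bar u\,x\cdot\nabla u$ and $V_{tt}=16(k_c+1)E[u_0]-8k_c\|\nabla u\|_{L^2}^2$, and then invoke the phase-plane/separatrix analysis of \cite{AndySvetlan2}. The paper does exactly this, citing \cite{AndySvetlan2} for the ODE comparison rather than reproducing it.

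One small correction of emphasis: you call the solution ``low-regularity'' and propose a mollification/cutoff limiting argument to justify the virial identities. In fact $\mathfrak{X}\subset H^{M+M_0-N}(\mathbb{R}^N)$ with $M+M_0-N$ large (see \eqref{condmainT3}), and on the solution set $\mathfrak{X}_T(R,\lambda)$ one also has $|u(x,t)|\gtrsim \langle x\rangle^{-m}>0$, so $|u|^{p-2}u$ is smooth along the flow. Together with the weighted decay $\langle x\rangle^m\partial^\alpha u\in L^\infty$ for $|\alpha|\le \lfloor N/2\rfloor$, this makes the integrations by parts needed for the virial identities directly justifiable without regularization; the paper accordingly just records \eqref{redeq0}--\eqref{redeq0.3} and moves on. Your regularization plan would also work, but it is not the obstacle you anticipate.
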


In contrast with Theorem \ref{scatres}, one may look for initial data $u_0=e^{\frac{ib|x|^2}{2}}v_0$, $b < 0$, % \in \mathbb{R}$, 
such that the corresponding solution of \eqref{gHartree} blows up in finite time. As a consequence of Theorem \ref{blowupcriteria}, we provide several initial conditions exhibiting this property.

\begin{corollary}\label{blowupcriteriacor}
Let $\max\{\frac{N+2}{N},\frac{4}{3}\}<p<2$, $0<\gamma<\min\{N(p-1)-2,\frac{(N+2)(p-1)-2}{2},\frac{N(3p-4)}{2p}\}$ and $\mu > 0$. %\in \mathbb{R}\setminus\{0\}$. 
Take  
\begin{equation*}
\max\Big\{\frac{N+2}{2},\frac{2\gamma+N}{4(p-1)}\Big\}< m<\frac{N-2\gamma}{2(2-p)},
\end{equation*} 
and $M_0,M \in \mathbb{Z}^{+}$ verifying \eqref{condmainT2} and \eqref{condmainT3}. Furthermore, take $k_{c}$ and $\omega_{c}$ as in the statement of Theorem \ref{blowupcriteria}. Let $v_0\in \mathfrak{X}$ be real-valued and satisfy \eqref{condi2}. 
\begin{itemize}
\item[(i)] Assume
\begin{equation}
\frac{E[v_0]\|xv_0\|_{L^2}^2}{(M[v_0])^2} < \omega_c^2
\end{equation}
and 
\begin{equation}
\begin{aligned}
E[v_0]\|xv_0\|_{L^2}^2<\bigg(\frac{(\omega_c M[v_0])^{2k_{c}+2}}{\big((1+k_{c})(\omega_c M[v_0])^2-k_{c}E[v_0]\|xv_0\|_{L^2}^2\big)}\bigg)^{1/k_{c}}.
\end{aligned}
\end{equation}
Then there exists $b_1>b_0\geq 0$ such that for all $b_0<b< b_1$ the solution $u$ of \eqref{gHartree} associated to $u_0=e^{\frac{ib|x|^2}{2}}v_0$ blows up in finite time. In particular, if $E[v_0]>0$, one can take $b_0=0$.
\item[(ii)] Assume
\begin{equation}
\frac{E[v_0]\|xv_0\|_{L^2}^2}{(M[v_0])^2}<\frac{\omega_c^2(1+k_{c})}{k_{c}}.
\end{equation} 
Then there exist $b_1\leq 0$ such that for all $b \leq b_1$ the solution $u$ of \eqref{gHartree} with initial condition $u_0=e^{\frac{ib|x|^2}{2}}v_0$ blows up in finite time.
\end{itemize}
\end{corollary}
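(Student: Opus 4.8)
The plan is to apply Theorem \ref{blowupcriteria} to the modulated datum $u_0=e^{ib|x|^2/2}v_0$, so that the whole argument reduces to evaluating the four functionals $M[u_0]$, $V(0)$, $\partial_t V(0)$ and $E[u_0]$ on such data and then checking that the quantitative criterion \eqref{blowupcondi} holds precisely for the claimed ranges of $b$. First I would record the effect of the Gaussian phase. Since $|u_0|=|v_0|$, the condition \eqref{condi2} passes from $v_0$ to $u_0$ verbatim, the mass is unchanged, $M[u_0]=M[v_0]$, the variance is unchanged, $V(0)=\|xv_0\|_{L^2}^2$, and the convolution term in the energy depends only on $|u_0|=|v_0|$. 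Using that $v_0$ is real-valued I would compute $\nabla u_0=e^{ib|x|^2/2}(\nabla v_0+ibxv_0)$, so the cross terms cancel and $|\nabla u_0|^2=|\nabla v_0|^2+b^2|x|^2v_0^2$; hence
$$E[u_0]=E[v_0]+\tfrac{b^2}{2}\|xv_0\|_{L^2}^2.$$
For the first variation of the variance I would use the virial identity $\partial_t V=4\,\mathrm{Im}\int\bar u\,x\cdot\nabla u\,dx$ (the nonlinear contribution vanishes pointwise since $\mu>0$ is real), which at $t=0$ yields $\partial_t V(0)=4b\|xv_0\|_{L^2}^2$. I would also note that these identities make $E[u_0]>0$ on the relevant $b$-ranges, matching the standing hypothesis of Theorem \ref{blowupcriteria}.

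Next I would substitute into \eqref{blowupcondi}. Writing $A=\|xv_0\|_{L^2}^2$, $M=M[v_0]$ and introducing the normalized quantity
$$z(b):=\frac{E[u_0]V(0)}{(\omega_c M[u_0])^2}=c_1+c_2b^2,\qquad c_1=\frac{E[v_0]A}{(\omega_c M)^2},\quad c_2=\frac{A^2}{2(\omega_c M)^2},$$
the key algebraic identity is $\frac{bA}{\omega_c M}=\mathrm{sgn}(b)\sqrt{2(z(b)-c_1)}$, so that after dividing \eqref{blowupcondi} by $4$ it becomes $\mathrm{sgn}(b)\sqrt{z(b)-c_1}<F(z(b))$. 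I would then analyze the two branches of $F$: for $z\le 1$ one has $F(z)\ge 0$, and a short computation (using $\phi(1)=0$ and the monotonicity of $\phi(z)=F(z)^2$ on $(0,1)$) reduces the inequality, after squaring with attention to signs, to a comparison of $z(b)$ with the single threshold
$$z_*:=\big((1+k_c)-k_c c_1\big)^{-1/k_c}.$$
Concretely, for $b>0$ the criterion is equivalent to $0<z(b)<z_*$, while for $b<0$ it holds automatically when $z(b)\le 1$ and is equivalent to $z(b)>z_*$ when $z(b)>1$.

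Finally I would translate these thresholds into the stated hypotheses. In case (i), $b>0$: the first assumption $E[v_0]A/M^2<\omega_c^2$ is exactly $c_1<1$ (which forces $z_*<1$ since $k_c>0$), and the second assumption is precisely $c_1<z_*$, after raising to the power $k_c$; since $z(b)$ increases from $z(0)=c_1$, the set $\{b>0:0<z(b)<z_*\}$ is a nonempty interval $(b_0,b_1)$, with $b_0=0$ exactly when $c_1>0$, i.e.\ when $E[v_0]>0$. In case (ii), $b<0$: the assumption $E[v_0]A/M^2<\omega_c^2(1+k_c)/k_c$ is exactly $c_1<(1+k_c)/k_c$, which is what keeps $(1+k_c)-k_c c_1>0$ and $z_*$ finite; since $z(b)\to\infty$ as $b\to-\infty$, eventually $z(b)>\max(1,z_*)$ and the criterion holds for all $b\le b_1$ with $b_1\le 0$ chosen accordingly. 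I would close by invoking Theorem \ref{blowupcriteria}.

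The hard part will be the branch/sign analysis of $F$ near $z=1$, and confirming that the two algebraic conditions in (i) collapse \emph{exactly} to $c_1<1$ and $c_1<z_*$ (and (ii) to $c_1<(1+k_c)/k_c$) after squaring; this is where a sign error or a failure of the monotonicity of $\phi$ would invalidate the equivalence. A secondary technical point is the admissibility of the modulated datum for Theorem \ref{blowupcriteria}: whereas \eqref{condi2}, the mass, the variance and the positivity of the energy transfer cleanly, the membership $u_0\in\mathfrak{X}$ needs care because differentiating the Gaussian phase produces polynomially growing factors in $x$, so I would secure the local solution either by imposing the requisite extra decay on $v_0$ or by obtaining $u$ through the pseudoconformal transform applied to $v_0\in\mathfrak{X}$.
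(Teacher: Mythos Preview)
Your approach is essentially the same as the paper's: compute the effect of the Gaussian modulation on $M$, $V(0)$, $\partial_t V(0)$ and $E$, substitute into the criterion \eqref{blowupcondi}, and reduce the two cases to explicit thresholds in $b$. Your normalization via $z(b)=c_1+c_2b^2$ and the threshold $z_*=\big((1+k_c)-k_cc_1\big)^{-1/k_c}$ is a slightly cleaner packaging of exactly the inequalities the paper writes out in \eqref{redeq2.1}--\eqref{redeq2.2} and \eqref{condionv1.1}; the branch/sign analysis you outline is correct and matches the paper's split into $b>0$ and $b<0$. Your final caveat about $u_0=e^{ib|x|^2/2}v_0\in\mathfrak X$ is well taken---the paper simply invokes Theorem~\ref{mainTHM} without commenting on this point.
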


%{\bf Remark 1.5.}
\begin{rem}
\normalfont \begin{itemize} 
\item[(i)] The function $\varphi(x)=\frac{a}{\langle x \rangle^m}$, $a \in \mathbb{C}\setminus\{0\}$, satisfies the hypothesis of Theorem \ref{mainTHM}.
\item[(ii)] Setting $\frac{N-2}{N+\gamma}<\frac{1}{p}<\frac{N}{N+\gamma}$, the results in \cite{MOROZ} establish that there exists a non-negative ground state solution of
\begin{equation}\label{groundequ}
\Delta\varphi+\varphi=c_{\gamma}(|x|^{-(N-\gamma)}\ast |\varphi|^{p})|\varphi|^{p-2}\varphi, \quad c_{\gamma}=\frac{\Gamma\big(\frac{N-\gamma}{2}\big)}{\Gamma(\frac{\gamma}{2})\pi^{N/2}2^{\gamma}|x|^{N-\gamma}},
\end{equation}
such that $\varphi\in H^1(\mathbb{R}^N)$ and for $p<2$,
\begin{equation}
\lim_{|x|\to \infty} (\varphi(x))^{2-p}|x|^{N-\gamma}=c_{\gamma}\|\varphi\|_{L^p}^{p}.
\end{equation}
One naturally looks for conditions such that $\langle x \rangle^{-\frac{(N-\gamma)}{2-p}}$ belongs to the class in Theorem \ref{mainTHM}. However, Theorem \ref{mainTHM} does not include the ground state solutions of the equation in \eqref{groundequ}.
\item[(iii)] The conclusion of Corollary \ref{blowupcriteriacor} is still true assuming $ \operatorname{Im} \int_{\mathbb{R}^N} \overline{v_0}(x \cdot \nabla v_0)\, dx =0$ instead of $v_0$ real-valued. See also Remark \ref{remblowup} for further conclusions of Theorem \ref{blowupcriteria}.
\end{itemize}
\end{rem}
\medskip
%%%%%%%%%%%%%%%%%%%%%%%%%%%%%%%%%%%%%%%%%%%%%%%%%%%%%%%%%%%%%%%%%%%%%%%%%%%%%%%%%%%%%%%%%%%

The paper is organized as follows: in the next section we recall some results on weighted spaces and fractional derivatives. The proof of the main Theorem \ref{mainTHM} is in Section \ref{S:main}. The global well-posedness and scattering result is discussed in Section \ref{S:scat}, and the blow-up conditions are proved in Section \ref{S:blowup}.  
\medskip

{\bf Notation.}
Given two positive quantities $a$ and $b$,  $a\lesssim b$ means that there exists a positive constant $c>0$ such that $a\leq c b$. We write $a\sim b$ to symbolize that $a\lesssim b$ and $b\lesssim a$. We use the standard multi-index notation, $\alpha=(\alpha_1,\dots,\alpha_N)\in \mathbb{N}^{N}$, $\partial^{\alpha}=\partial_{x_1}^{\alpha_1}\cdots \partial_{x_N}^{\alpha_N}$, $|\alpha|=\sum_{j=1}^N |\alpha_j|$, $\alpha \leq \beta$, whenever $\alpha_j \leq \beta_j$ for all $1\leq j \leq N$. 
\\ \\
Let $1\leq r \leq \infty$, $L^r(\mathbb{R}^N;d\omega(x))$ denote the weighted Lebesgue space defined by the norm
\begin{equation}\label{eqnot1}
\|f\|_{L^{r}(\mathbb{R}^N;d\omega(x))}^r=\int_{\mathbb{R}^N}|f(x)|^r \, d\omega(x),
\end{equation} 
with the respective modifications for the case $r=\infty$. It is typical to denote weighted spaces as $L^{p}(\omega)$ or $L^{p}(\omega^r)$, but for clarity, we use notation as in \eqref{eqnot1}. The Fourier transform and the inverse Fourier transform of a function $f$ are denoted by $\widehat{f}$ and $f^{\vee}$, respectively.  For $s\in \mathbb{R}$, the Bessel potential of order $-s$ is denoted by $J^s=(1-\Delta)^{s/2}$, equivalently, $J^s$ is defined by the Fourier multiplier with symbol $\langle \xi \rangle^{s/2}=(1+|\xi|^2)^{s/2}$.  The Riesz potential of order $-s$ is denoted $D^s=(-\Delta)^{s/2}$, that is, $D^s$ is the Fourier multiplier operator determined by the function $|\xi|^s$. Given $b\in (0,1)$, we will also be using one of the Stein's derivatives $\mathcal{D}^s$ (see \eqref{Steinsquare} below). The Sobolev spaces $H^s(\mathbb{R}^N)$ consist of all tempered distributions such that $\left\|f\right\|_{H^s}=\left\|J^{s}f \right\|_{L^2}<\infty$. Given an integer $l\geq 0$, the space $H^{l}(\mathbb{R}^N;d\omega(x))$ denotes the weighted Sobolev space defined by the norm
\begin{equation*}
\|f\|_{H^{l}(\mathbb{R}^N;d\omega(x))}^2=\sum_{|\beta|\leq l}\int_{\mathbb{R}^{N}} |\partial^{\beta}f(x)|^2\, d\omega(x).
\end{equation*}
%\\ \\
Let $1\leq r \leq \infty$ and $T>0$, if $A$ denotes a function space (as those introduced above), we define the spaces $L^r_TA$ and $L^r_t A$ as follows
\begin{equation*}
\begin{aligned}
\|f\|_{L^{r}_T A_x}&=\Big(\int_0^T \|f(\cdot,t)\|_{A}^p\, dt \Big)^{1/r},\\
\|f\|_{L^{r}_t A_x}&=\Big(\int_{\mathbb{R}} \|f(\cdot,t)\|_{A}^r\, dt \Big)^{1/r}.
\end{aligned}
\end{equation*}

%%%%%%%%%%%%%%%%%%%%%%%%%%%%%%%%%%%%%%%%%%%%%%%%%%%%%%%%%%%%%%%%%%%%%%%%%%%%%%%%%%%%%%%%%%%
%%%%%%%%%%%%%%%%%%%%%%%%%%%%%%%%%%%%%%%%%%%%%%%%%%%%%%%%%%%%%%%%%%%%%%%%%%%%%%%%%%%%%%%%%%%%%%%

%\subsection*{Acknowledgments}

\textbf{Acknowledgements.} O.R. and S.R. were partially supported by the NSF grant DMS-1927258 (PI: Roudenko).

%%%%%%%%%%%%%%%%%%%%%%%%%%%%%%%%%%%%%%%%%%%%%%%%%%%%%%%%%%%%%%%%%%%%%%%%%%%%%%%%%%%%%%%%%%%%%%%%%%%%%%%%%%%%%%%%%%%%%%%%%%%%%%%%%%%%%%%%%%%%%%%%%%%%%%%%%%%%%%%%%%%%%%%%%%%%%%%%%%%%%%%%%

%%%%%%%%%%%%%%%%%%%%%%%%%%%%%%%%%%%%%%%%%%%%%%%%%%%%%%%%%%%%%%%%%%%%%%%%%%%%%%%%%%%%%%%%%%%%%%%%%%%%%%%%%%%%%%%%%%%%%%%%%%%%%%%%%%%%%
 
\section{Preliminary estimates}
 
This section aims to provide some results relating weighted spaces and fractional derivatives. Additionally, we  present some estimates in weighted spaces for solutions of the linear equation associated to \eqref{gHartree}.

\subsection{Preliminaries on weighted spaces and fractional derivatives}

Since in this subsection we will not provide estimates for the nonlinear problem \eqref{gHartree}, a real number $p\geq 1$ will be used for some preliminaries presented in this part. Thus, we say that a non-negative function $f \in L^{1}_{loc}(\mathbb{R}^N)$ satisfies the classical Muckenhoupt $A_p$ condition with $1<p<\infty$ if 
\begin{equation}\label{Apcond}
\begin{aligned}
\left[f\right]_{A_p}=\sup_{Q} \bigg( \dashint_Q f(x) \, dx \bigg) \bigg( \dashint_Q f(x)^{1-p'} \, dx \bigg)^{p-1}<\infty,
\end{aligned}
\end{equation}
where the supremum runs over cubes in $\mathbb{R}^d$ and $\frac{1}{p}+\frac{1}{p'}=1$ (see \cite{MuckApcond}). In particular, we have that (see \cite{JavierHarmo,stein1993harmonic}) 
\begin{equation}\label{weighAp}
|x|^{l} \in A_{p} \, \text{ if and only if } \, l\in(-N,N(p-1)).
\end{equation}

Motivated by the nonlinear term in \eqref{gHartree}, we will need continuity properties of Riesz potentials in weighted spaces. 
\begin{theorem}\label{weightriesz} (\cite{Rieszweigh2,LACEY,Rieszweigh})
Given $0<\gamma<N$ and $1<p<\frac{N}{\gamma}$, let
\begin{equation}\label{eqweightriesz0.1}
\frac{1}{q}=\frac{1}{p}-\frac{\gamma}{N}. 
\end{equation}
Then the following are equivalent:
\begin{itemize}
\item[(i)] $\omega \in A_{p,q}$:
\begin{equation}\label{Apqcond}
\left[\omega\right]_{A_{p,q}}=\sup_{Q} \bigg( \dashint_Q \omega(x)^q \, dx \bigg)^{1/q} \bigg( \dashint_Q \omega(x)^{-p'} \, dx \bigg)^{1/p'}<\infty,
\end{equation}
where the supremum is taken over any $N$-dimensional cube $Q$ and $\frac{1}{p}+\frac{1}{p'}=1$. 
\item[(ii)] The following inequality holds true
\begin{equation*}
\begin{aligned}
\|\big(|\cdot|^{-(N-\gamma)}\ast f\big)\omega\|_{L^q}\lesssim \|f \omega\|_{L^p},
\end{aligned}
\end{equation*}
for any $f\in L^{p}(\mathbb{R}^{N};\omega^{p}\, dx)$, where the implicit constant is independent of $f$. 
\end{itemize}
\end{theorem}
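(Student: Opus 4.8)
The plan is to recognize this as the weighted Hardy--Littlewood--Sobolev inequality of Muckenhoupt--Wheeden type and to prove the two implications separately. Throughout write $I_\gamma f:=|\cdot|^{-(N-\gamma)}\ast f$ for the fractional integral of order $\gamma$, so that \eqref{eqweightriesz0.1} is exactly the scaling relation $\tfrac1q=\tfrac1p-\tfrac{\gamma}{N}$ (note $1<p<N/\gamma$ forces $p<q<\infty$ and $q>1$, so all exponents lie in $(1,\infty)$ and duality applies). I would prove the easier direction (ii)$\Rightarrow$(i) by testing the inequality on functions supported in a single cube, and the substantial direction (i)$\Rightarrow$(ii) by dominating $I_\gamma$ by a simpler positive operator (the fractional maximal operator, or a sparse operator) for which the weighted bound is transparent.

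For necessity, fix a cube $Q$ of side length $\ell(Q)$ and test (ii) with $f=\omega^{-p'}\chi_Q$ (a routine truncation handles integrability). For $x,y\in Q$ one has $|x-y|\le\sqrt N\,\ell(Q)$, hence $|x-y|^{-(N-\gamma)}\gtrsim\ell(Q)^{\gamma-N}$ and therefore $I_\gamma f(x)\gtrsim\ell(Q)^{\gamma-N}\int_Q\omega^{-p'}$ for $x\in Q$. Since $-p'p+p=-p'$ gives $\|f\omega\|_{L^p}=\big(\int_Q\omega^{-p'}\big)^{1/p}$, inequality (ii) yields
\[
\ell(Q)^{\gamma-N}\Big(\int_Q\omega^{-p'}\Big)\Big(\int_Q\omega^{q}\Big)^{1/q}\lesssim\Big(\int_Q\omega^{-p'}\Big)^{1/p}.
\]
Dividing by $\big(\int_Q\omega^{-p'}\big)^{1/p}$, converting integrals to averages with $|Q|=\ell(Q)^N$, and using $\tfrac1q+\tfrac1{p'}=1-\tfrac\gamma N$ (so that the powers of $\ell(Q)$ cancel) produces precisely the $A_{p,q}$ bound \eqref{Apqcond}.

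For sufficiency I would reduce to the fractional maximal function $M_\gamma f(x)=\sup_{Q\ni x}\ell(Q)^{\gamma}\dashint_Q|f|$ via two ingredients. (a) A good-$\lambda$ comparison with respect to the measure $\omega^q\,dx$: a direct computation shows $\omega\in A_{p,q}$ is equivalent to $\omega^q\in A_{1+q/p'}\subset A_\infty$, and the classical good-$\lambda$ argument then gives $\|I_\gamma f\|_{L^q(\omega^q\,dx)}\lesssim\|M_\gamma f\|_{L^q(\omega^q\,dx)}$. (b) The weighted maximal estimate $\|M_\gamma f\|_{L^q(\omega^q\,dx)}\lesssim\|f\|_{L^p(\omega^p\,dx)}$, obtained from a Calderón--Zygmund/stopping-cube decomposition: on each selected cube write $\ell(Q)^{\gamma}\dashint_Q|f|=\ell(Q)^{\gamma}|Q|^{-1}\int_Q(|f|\omega)\,\omega^{-1}$, apply Hölder so that the factor $\big(\dashint_Q\omega^{-p'}\big)^{1/p'}$ appears, and fold it against $\big(\dashint_Q\omega^q\big)^{1/q}$ using the finite $A_{p,q}$ constant. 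Equivalently, and more cleanly, one can bypass $M_\gamma$ altogether using the pointwise sparse domination
\[
I_\gamma f\lesssim\sum_{Q\in\mathcal S}|Q|^{\gamma/N}\langle f\rangle_Q\,\chi_Q,\qquad f\ge 0,
\]
valid for a sparse family $\mathcal S$ after passing to finitely many shifted dyadic grids, and then dualize against $g\in L^{q'}$ to estimate the resulting bilinear sparse form.

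The main obstacle is step (b), i.e.\ summing the localized contributions over the infinite (sparse) family without losing the gain from the $A_{p,q}$ condition. The delicate point is to organize the Hölder splitting so that $\big(\dashint_Q\omega^q\big)^{1/q}$ and $\big(\dashint_Q\omega^{-p'}\big)^{1/p'}$ pair into the bounded $A_{p,q}$ quantity, while the leftover averages of $|f|\omega$ and of the dual function are summed by a Carleson-type embedding exploiting the disjoint major subsets $E_Q\subset Q$ ($|E_Q|\ge\tfrac12|Q|$) of the sparse family. The remaining pieces---the testing argument, the self-improvement $A_{p,q}\Rightarrow A_\infty$, and the good-$\lambda$ comparison---are standard. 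Since the statement is classical, in the paper itself one simply invokes \cite{Rieszweigh2,LACEY,Rieszweigh}; the above is how I would reconstruct it.
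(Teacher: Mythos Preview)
The paper does not prove this theorem; it is stated with citations to \cite{Rieszweigh2,LACEY,Rieszweigh} and used as a black box, exactly as you anticipate in your final sentence. Your reconstruction is a correct outline of the classical Muckenhoupt--Wheeden argument: the testing computation for (ii)$\Rightarrow$(i) is carried out accurately (the cancellation $\gamma-N+N/p'+N/q=0$ is exactly what makes the $A_{p,q}$ quantity scale-invariant), and for (i)$\Rightarrow$(ii) both the good-$\lambda$ route through $M_\gamma$ and the sparse-domination alternative are legitimate, with the Carleson embedding over the sparse family being the honest technical core. There is nothing to compare against in the paper itself.
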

Since we are interested in applying Theorem \ref{weightriesz} for the case $\omega=\langle x \rangle^{l}$, we have to verify the relations between the indices $l,p,q$ assuring that $\langle x \rangle^{l} \in A_{p,q}$. We can rephrase this last property in terms of the $A_p$ condition. But first, taking $p$ and $q$ as in Theorem \ref{weightriesz}, we set 
\begin{equation*}
p^{\ast}=1+\frac{q}{p'}=q\Big(1-\frac{\gamma}{N}\Big)=p\Big(\frac{N-\gamma}{N-\gamma p}\Big).
\end{equation*}
Then, by \eqref{Apcond}, \eqref{weighAp} and \eqref{Apqcond}, we deduce
\begin{equation}\label{eqweightriesz2}
\begin{aligned}
|x|^{l}\in A_{p,q} \, \, &\text{ if and only if } \, \, |x|^{lq}\in A_{p^{\ast}} \\
% &\text{ if and only if } \, \,  lq \in (-N,N(p^{\ast}-1))\\
&\text{ if and only if } \, \, -\frac{N-p\gamma}{p}=-\frac{N}{q}<l<\frac{N(p-1)}{p}. 
\end{aligned}
\end{equation}
Summarizing the previous discussion, we have
\begin{prop}\label{propweightriesz}
Let $0<\gamma<N$, $1<p<\frac{N}{\gamma}$,
\begin{equation}\label{eqweightriesz0}
\frac{1}{q}=\frac{1}{p}-\frac{\gamma}{N}, \, \text{ and } \, -\frac{N-p\gamma}{p}<l<\frac{N(p-1)}{p}.
\end{equation}
Then 
\begin{equation*}
\|\big(|\cdot|^{-(N-\gamma)}\ast f\big)\langle x \rangle^{l}\|_{L^q}\lesssim \|f \langle x \rangle^{l}\|_{L^p},
\end{equation*}
for any $f\in L^{p}(\mathbb{R}^N;\omega^{p}\, dx)$, where the implicit constant is independent of $f$. 
\end{prop}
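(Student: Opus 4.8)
The plan is to read off the inequality from Theorem \ref{weightriesz} applied to the weight $\omega=\langle x\rangle^{l}$. Since $1<p<N/\gamma$ ensures $1/q=1/p-\gamma/N>0$, the exponents $p,q$ are admissible in that theorem, so by the equivalence (i)~$\Longleftrightarrow$~(ii) the whole statement reduces to verifying that $\langle x\rangle^{l}$ satisfies the $A_{p,q}$ condition \eqref{Apqcond} for $l$ in the range \eqref{eqweightriesz0}. Once that membership is established, Theorem \ref{weightriesz}(ii) with $\omega=\langle x\rangle^{l}$ gives exactly the asserted bound $\|(|\cdot|^{-(N-\gamma)}\ast f)\langle x\rangle^{l}\|_{L^q}\lesssim \|f\langle x\rangle^{l}\|_{L^p}$.

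First I would reduce the $A_{p,q}$ condition to an ordinary Muckenhoupt $A_{p^{\ast}}$ condition, with $p^{\ast}=1+q/p'$ the exponent introduced just before the proposition. A direct manipulation of \eqref{Apcond} and \eqref{Apqcond}, using the dual exponent of $p^{\ast}$ together with the identity $q\big(1-(p^{\ast})'\big)=-p'$, shows that a weight $\omega$ lies in $A_{p,q}$ if and only if $\omega^{q}\in A_{p^{\ast}}$ (indeed $[\omega]_{A_{p,q}}^{q}=[\omega^{q}]_{A_{p^{\ast}}}$). Taking $\omega=\langle x\rangle^{l}$, it then suffices to show $\langle x\rangle^{lq}\in A_{p^{\ast}}$, which by the power-weight characterization should hold precisely when $-N<lq<N(p^{\ast}-1)=Nq/p'$. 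Dividing through by $q$ and substituting $1/q=1/p-\gamma/N$ and $N/p'=N(p-1)/p$ turns these two inequalities into $-\frac{N-p\gamma}{p}<l<\frac{N(p-1)}{p}$, i.e. the range \eqref{eqweightriesz0}; this is the content of \eqref{eqweightriesz2}.

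The one point that requires genuine care — and the step I expect to be the main obstacle — is that the characterization \eqref{weighAp} is stated for the \emph{homogeneous} weight $|x|^{m}$, whereas here I need the corresponding statement for the \emph{inhomogeneous} weight $\langle x\rangle^{m}$ with $m=lq$ and $r=p^{\ast}$. To close this gap I would verify the $A_{p^{\ast}}$ quotient for $\langle x\rangle^{lq}$ directly, splitting the supremum over cubes $Q$ into two regimes: for cubes of bounded side length meeting a fixed neighborhood of the origin one has $\langle x\rangle^{lq}\sim 1$ on $Q$, so the quotient is $O(1)$ uniformly; for cubes that are either far from the origin or of large side length one has $\langle x\rangle\sim 1+|x|$, whence $\langle x\rangle^{lq}$ is comparable to $\max(1,|x|)^{lq}$ and the estimate reduces to the homogeneous case \eqref{weighAp}. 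The upper endpoint $lq<N(p^{\ast}-1)$ comes from the dual factor and the lower endpoint $lq>-N$ (equivalently $l>-\frac{N-p\gamma}{p}$) is recovered from the mean of $\langle x\rangle^{lq}$ over large cubes centered at the origin, so although $\langle x\rangle^{lq}$ is bounded near $0$ the admissible range of exponents is exactly the one in \eqref{weighAp}, and no room is lost in passing from $|x|$ to $\langle x\rangle$. This yields $\langle x\rangle^{l}\in A_{p,q}$ throughout \eqref{eqweightriesz0} and completes the argument.
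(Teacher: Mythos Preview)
Your proposal is correct and follows essentially the same route as the paper: reduce to Theorem~\ref{weightriesz} by showing $\langle x\rangle^{l}\in A_{p,q}$, rewrite this as $\langle x\rangle^{lq}\in A_{p^{\ast}}$ with $p^{\ast}=1+q/p'$, and read off the range for $l$ from the power-weight characterization \eqref{weighAp}. The paper in fact leaves the passage from $|x|^{l}$ to $\langle x\rangle^{l}$ implicit (and remarks that one could alternatively quote Stein--Weiss), so your explicit verification of the $A_{p^{\ast}}$ condition for the inhomogeneous weight is more careful than, but entirely in line with, the paper's argument.
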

We remark that Proposition \ref{propweightriesz} can also be deduced as a consequence of the results due to Stein and Weiss \cite{SGU}. Next, let $b\in (0,1)$, we define one of the Stein's fractional derivatives
\begin{equation}\label{Steinsquare}
\mathcal{D}^b f(x)=\left(\int_{\mathbb{R}^N}\frac{|f(x)-f(y)|^2}{|x-y|^{N+2b}}\, dy\right)^{1/2}, \, \,  x\in \mathbb{R}^N,
\end{equation}
for a sufficiently regular function $f$ (for instance, a Schwartz function). To deal with fractional weights, we recall the following characterization of the spaces $L^p_s(\mathbb{R}^N)=J^{-s}L^p(\mathbb{R}^N)$.
\begin{theorem}( \cite{SteinThe})\label{TheoSteDer} 
Let $b\in (0,1)$ and $\frac{2N}{N+2b}<p<\infty$. Then $f\in L_b^p(\mathbb{R}^N)$ if and only if
\begin{itemize}
\item[(i)]  $f\in L^p(\mathbb{R}^N)$ and
\item[(ii)]$\mathcal{D}^bf \in L^{p}(\mathbb{R}^N)$
\end{itemize}
with 
\begin{equation*}
\|J^b f\|_{L^p}=\|(1-\Delta)^{b/2} f \|_{L^p} \sim \|f\|_{L^p}+\|\mathcal{D}^b f\|_{L^p} \sim \|f\|_{L^p}+\|D^b f\|_{L^p}
\end{equation*} 
(recalling that $D^b=(-\Delta)^{b/2}$).
\end{theorem}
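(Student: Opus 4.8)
The plan is to split the claimed norm equivalence into two parts: the \emph{Bessel-to-Riesz} comparison $\|J^b f\|_{L^p}\sim \|f\|_{L^p}+\|D^b f\|_{L^p}$, and the \emph{difference--square-function} comparison $\|f\|_{L^p}+\|D^b f\|_{L^p}\sim \|f\|_{L^p}+\|\mathcal{D}^b f\|_{L^p}$. The first comparison holds on the full range $1<p<\infty$ and is the routine one. Writing $f=(I+D^b)^{-1}(I+D^b)f$ and $J^b=\big(J^b(I+D^b)^{-1}\big)(I+D^b)$, the task reduces to checking that the Fourier symbols $\langle\xi\rangle^{b}(1+|\xi|^{b})^{-1}$, its reciprocal, $|\xi|^{b}(1+|\xi|^{b})^{-1}$ and $(1+|\xi|^{b})^{-1}$ are all Fourier multipliers on $L^p$. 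Each is bounded and smooth away from the origin, and since $0<b<1$ a direct differentiation gives the Hörmander--Mikhlin bounds $|\partial^{\alpha}_\xi m(\xi)|\lesssim |\xi|^{-|\alpha|}$ (near $\xi=0$ the worst contributions come from derivatives of $|\xi|^{b}$, of size $|\xi|^{b-|\alpha|}\lesssim|\xi|^{-|\alpha|}$). Thus these operators are bounded on $L^p$, which yields the first comparison and leaves only the comparison of $D^b$ with the pointwise square function $\mathcal{D}^b$.

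Second, I would establish $\|\mathcal{D}^b f\|_{L^p}\sim \|D^b f\|_{L^p}$, up to the lower-order term $\|f\|_{L^p}$ coming from the large-$|h|$ part of the integral defining $\mathcal{D}^b$. The case $p=2$ is exact and purely a Plancherel computation: writing $\mathcal{D}^b f(x)^2=\int_{\mathbb R^N}|f(x)-f(x-h)|^2|h|^{-(N+2b)}\,dh$, Fubini and Plancherel in $x$ reduce the $L^2$ norm to $\int_{\mathbb R^N}|\widehat f(\xi)|^2\big(\int_{\mathbb R^N}|1-e^{-ih\cdot\xi}|^2|h|^{-(N+2b)}\,dh\big)\,d\xi$, and the inner integral equals $c_{N,b}|\xi|^{2b}$ by rotation and scaling invariance, so that $\|\mathcal{D}^b f\|_{L^2}=c_{N,b}^{1/2}\|D^b f\|_{L^2}$.

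Third, to pass from $p=2$ to general $p$ I would treat $\mathcal{D}^b$ as the fiberwise $L^2$-norm of a Hilbert-space-valued convolution operator: with $\mathcal H=L^2(\mathbb R^N_h,dh)$ and $(Sg)(x)(h)=\big((J^{-b}_{\mathrm{R}}g)(x)-(J^{-b}_{\mathrm{R}}g)(x-h)\big)|h|^{-(N+2b)/2}$, where $J^{-b}_{\mathrm{R}}=D^{-b}$, one has $\mathcal{D}^b f=\|S(D^b f)(x,\cdot)\|_{\mathcal H}$. The operator-valued symbol $m(\xi)(h)=(1-e^{-ih\cdot\xi})|h|^{-(N+2b)/2}|\xi|^{-b}$ satisfies $\|m(\xi)\|_{\mathcal H}\sim 1$ and, again by scaling, the operator-valued Hörmander--Mikhlin bounds $\|\partial^{\alpha}_\xi m(\xi)\|_{\mathcal H}\lesssim|\xi|^{-|\alpha|}$. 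The vector-valued Calderón--Zygmund theorem (Benedek--Calderón--Panzone) then gives $\|\mathcal{D}^b f\|_{L^p}\lesssim \|f\|_{L^p}+\|D^b f\|_{L^p}$ for every $1<p<\infty$.

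Finally --- and this is the delicate direction where the hypothesis $\tfrac{2N}{N+2b}<p$ enters --- one must bound $\|D^b f\|_{L^p}$ by $\|f\|_{L^p}+\|\mathcal{D}^b f\|_{L^p}$. I would argue by duality: pairing $D^b f$ with a test function $g$ and using Cauchy--Schwarz in the $h$-variable, the pairing is controlled by $\|\mathcal{D}^b f\|_{L^p}$ times a $\dot H^{-b}$-type quantity for $g$, which is finite precisely when $g\in L^{p'}$ with $p'<\tfrac{2N}{N-2b}$; since $\tfrac{2N}{N-2b}$ is the Sobolev conjugate $2^{\ast}_b$ of the embedding $\dot H^{b}\hookrightarrow L^{2^{\ast}_b}$, the requirement $p'<2^{\ast}_b$ is exactly $p>\tfrac{2N}{N+2b}$. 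I expect this inclusion to be the main obstacle: unlike the others it is not a pure multiplier statement, it must separate the near regime ($|h|$ small, encoding smoothness) from the far regime ($|h|$ large, absorbed into $\|f\|_{L^p}$), and the lower bound on $p$ marks the natural threshold below which first-order $L^2$-differences no longer characterize the Bessel potential space $L^p_b$. The full argument is carried out by Stein \cite{SteinThe}; combining it with the first step then yields all three equivalences $\|J^b f\|_{L^p}\sim\|f\|_{L^p}+\|\mathcal{D}^b f\|_{L^p}\sim\|f\|_{L^p}+\|D^b f\|_{L^p}$.
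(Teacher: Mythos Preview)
The paper does not give a proof of this theorem at all: it is stated with attribution to Stein \cite{SteinThe} and used as a black box throughout (to derive \eqref{prelimneq}, \eqref{prelimneq1}, and in the proofs of Lemmas \ref{lemmachangingDbyJ} and \ref{derivexp2}). So there is nothing in the paper to compare your argument against; your proposal goes well beyond what the authors do.

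As a sketch in its own right, your first three steps are sound and follow the standard route: the Bessel/Riesz comparison via H\"ormander--Mikhlin, the exact $p=2$ identity by Plancherel, and the upper bound $\|\mathcal{D}^b f\|_{L^p}\lesssim \|f\|_{L^p}+\|D^b f\|_{L^p}$ via vector-valued Calder\'on--Zygmund theory. The fourth step --- the lower bound $\|D^b f\|_{L^p}\lesssim \|f\|_{L^p}+\|\mathcal{D}^b f\|_{L^p}$ --- is where your write-up is thinnest. The duality idea you describe (``pair $D^b f$ with $g$ and use Cauchy--Schwarz in $h$'') does not quite work as stated: Cauchy--Schwarz in $h$ would produce $\mathcal{D}^b f\cdot \mathcal{D}^b g$, not $\mathcal{D}^b f$ against an $\dot H^{-b}$-quantity, and the passage from there to the claimed bound needs more than a Sobolev embedding. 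Stein's actual argument for this direction proceeds differently (via a pointwise representation of $D^b f$ in terms of first differences and a maximal function, or via the $g_\lambda^\ast$-function), and the threshold $p>\tfrac{2N}{N+2b}$ enters through the $L^p$-boundedness of that auxiliary operator. If you want a self-contained proof you should either reproduce that part of Stein's argument or cite it explicitly, as the paper does.
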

Let us recall some useful consequences of Theorem \ref{TheoSteDer}. When $p=2$ and $b\in (0,1)$, one can deduce 
\begin{equation} \label{prelimneq} 
\left\|\mathcal{D}^b(fg)\right\|_{L^2} \lesssim \left\|f\mathcal{D}^b g\right\|_{L^2}+\left\|g\mathcal{D}^bf \right\|_{L^2},
\end{equation}
and also
\begin{equation} \label{prelimneq1}
\left\|\mathcal{D}^{b} h\right\|_{L^{\infty}} \lesssim \left\|h\right\|_{L^{\infty}}+\left\|\nabla h\right\|_{L^{\infty}}.
\end{equation} 
We will apply the following interpolation result deduced in \cite[Lemma 4]{NahasPo}.
\begin{prop}\label{propweight2} For any $a,b>0$, $\theta\in (0,1)$,
\begin{equation}\label{eq1propweight2}
\|\langle x \rangle^{\theta a}\big(J^{(1-\theta)b}f\big)\|_{L^2}\lesssim \|J^{b}f\|^{1-\theta}_{L^2}\|\langle x \rangle^{a} f\|_{L^2}^{\theta},
\end{equation}
\begin{equation}\label{eq2propweight2}
\|J^{\theta a}\big(\langle x \rangle^{(1-\theta)b}f\big)\|_{L^2}\lesssim \|\langle x \rangle^{b}f\|^{1-\theta}_{L^2}\|J^{a} f\|_{L^2}^{\theta}.
\end{equation}
\end{prop}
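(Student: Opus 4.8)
The plan is to prove the first inequality \eqref{eq1propweight2} by complex interpolation (Stein's interpolation theorem for analytic families of operators, i.e. the three-lines theorem), and then to obtain the second inequality \eqref{eq2propweight2} from it essentially for free by conjugating with the Fourier transform. By density it suffices to treat $f$ in the Schwartz class, so that all the quantities below are finite and the relevant functions of $z$ are analytic with admissible growth.

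For \eqref{eq1propweight2}, fix $a,b>0$ and consider, on the strip $0\le \operatorname{Re} z\le 1$, the analytic family
\[
T_z=\langle x \rangle^{za}\,J^{(1-z)b},
\]
so that $T_\theta f=\langle x \rangle^{\theta a}J^{(1-\theta)b}f$ is exactly the quantity to be estimated, while $T_0f=J^bf$ and $T_1f=\langle x \rangle^a f$. On the left edge $z=is$ both the factor $\langle x \rangle^{isa}$ and the Fourier multiplier $J^{-isb}$ (symbol $\langle \xi \rangle^{-isb}$) are unimodular, so writing $J^{(1-is)b}=J^bJ^{-isb}$ gives
\[
\|T_{is}f\|_{L^2}=\big\|\langle x \rangle^{isa}\,J^{-isb}(J^bf)\big\|_{L^2}=\|J^bf\|_{L^2}.
\]
On the right edge $z=1+is$ one has $T_{1+is}f=\langle x \rangle^a\langle x \rangle^{isa}J^{-isb}f$, and after discarding the unimodular factor $\langle x \rangle^{isa}$,
\[
\|T_{1+is}f\|_{L^2}=\big\|\langle x \rangle^a J^{-isb}\langle x \rangle^{-a}\,(\langle x \rangle^a f)\big\|_{L^2}\le \big\|\langle x \rangle^a J^{-isb}\langle x \rangle^{-a}\big\|_{L^2\to L^2}\,\|\langle x \rangle^a f\|_{L^2}.
\]
Granting that the conjugated imaginary power $\langle x \rangle^a J^{-isb}\langle x \rangle^{-a}$ is bounded on $L^2$ with operator norm growing at most polynomially in $s$, the three-lines theorem applied to $z\mapsto \langle T_zf,\phi\rangle$ with $\|\phi\|_{L^2}=1$ (the polynomial-in-$s$ growth on the boundary lines being admissible and absorbed into the implicit constant) yields $\|T_\theta f\|_{L^2}\lesssim \|J^bf\|_{L^2}^{1-\theta}\|\langle x \rangle^a f\|_{L^2}^{\theta}$, which is \eqref{eq1propweight2}.

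The step I expect to be the main obstacle is precisely this right-edge bound: the $L^2$-boundedness, uniformly up to sub-exponential (in fact polynomial) growth in $s$, of the imaginary power $J^{-isb}=(1-\Delta)^{-isb/2}$ on the weighted space $L^2(\langle x \rangle^{2a}\,dx)$. When $\langle x \rangle^{2a}\in A_2$, i.e. $0<a<N/2$ by \eqref{weighAp}, this follows from the Calder\'on--Zygmund theory together with the fact that the kernel constants of $J^{-isb}$ grow only polynomially in $s$. For general $a>0$ (which the application genuinely needs, since $m>N/2$ is permitted) the $A_2$ condition fails, and one instead views $\langle x \rangle^a J^{-isb}\langle x \rangle^{-a}$ as a zero-order pseudodifferential operator: since $\partial_\xi^\beta\langle \xi \rangle^{-isb}=O\big((1+|s|)^{|\beta|}\langle \xi \rangle^{-|\beta|}\big)$, its symbol lies in a fixed class with seminorms growing polynomially in $s$, and the Calder\'on--Vaillancourt theorem supplies the required $L^2$ bound. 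This is the point where the smoothness of the Bessel symbol, and not merely $A_2$ membership of the weight, is what makes the argument work.

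Finally, \eqref{eq2propweight2} is the Plancherel image of \eqref{eq1propweight2} and requires no separate interpolation. The Fourier transform $\mathcal F$ is an $L^2$-isometry intertwining multiplication and Bessel potentials, $\mathcal F(J^sf)=\langle \xi \rangle^s\mathcal F f$ and $\mathcal F(\langle x \rangle^sf)=J^s(\mathcal F f)$, the latter because $\langle x \rangle^s$ corresponds under $\mathcal F$ to $(1-\Delta_\xi)^{s/2}$. Applying $\mathcal F$ to \eqref{eq1propweight2} and setting $g=\mathcal F f$ turns the left-hand side into $\|J^{\theta a}(\langle \xi \rangle^{(1-\theta)b}g)\|_{L^2}$ and the right-hand side into $\|\langle \xi \rangle^b g\|_{L^2}^{1-\theta}\|J^a g\|_{L^2}^{\theta}$; relabeling $\xi\to x$ and $g\to f$ reproduces exactly \eqref{eq2propweight2}.
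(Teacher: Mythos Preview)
The paper does not prove this proposition at all: it is quoted verbatim from Nahas--Ponce \cite[Lemma~4]{NahasPo}, so there is no in-paper proof to compare against. Your argument via Stein's analytic interpolation is exactly the method used in that reference, and your observation that \eqref{eq2propweight2} follows from \eqref{eq1propweight2} by Plancherel (interchanging the roles of $\langle x\rangle^s$ and $J^s$) is correct and efficient.

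The one place worth tightening is the right-edge bound. Invoking Calder\'on--Vaillancourt for $\langle x\rangle^{a}J^{-isb}\langle x\rangle^{-a}$ requires justifying that the composed symbol lies in a suitable class despite the polynomial growth of $\langle x\rangle^{a}$; this is true, but the cleanest way to see it is the very duality you use at the end. By Plancherel, the bound $\|\langle x\rangle^{a}J^{-isb}g\|_{L^2}\lesssim C(s)\|\langle x\rangle^{a}g\|_{L^2}$ is equivalent to $\|J^{a}_{\xi}(\langle\xi\rangle^{-isb}\widehat{g})\|_{L^2}\lesssim C(s)\|J^{a}_{\xi}\widehat{g}\|_{L^2}$, i.e., multiplication by the smooth bounded function $\langle\xi\rangle^{-isb}$ is bounded on $H^{a}(\mathbb{R}^N)$. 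Since $|\partial^{\beta}_{\xi}\langle\xi\rangle^{-isb}|\lesssim (1+|s|)^{|\beta|}\langle\xi\rangle^{-|\beta|}$, this is a standard multiplier/Sobolev fact (Leibniz for integer $a$, Kato--Ponce or the Coifman--Meyer multiplier theorem for general $a>0$), with operator norm growing polynomially in $s$ as required for the three-lines theorem. With this reformulation the $A_2$ detour is unnecessary and the restriction $a<N/2$ never enters.
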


\begin{rem}
\normalfont The inequality \eqref{eq1propweight2} is also valid in $L^p(\mathbb{R}^N)$, $1<p<\infty$, see \cite[Lemma 2.7]{LINARES2020}.
\end{rem}

We also require the following lemma relating weighted estimates between homogeneous and non-homogeneous derivatives.
\begin{lemma}\label{lemmachangingDbyJ}
Let $0<b<s$. Then it follows 
\begin{equation*}
\begin{aligned}
\|\langle x \rangle^{b}D^{s}f\|_{L^2} \lesssim \|\langle x \rangle^{b}f\|_{L^2}+\|J^{s-b}f\|_{L^2}+\|\langle x \rangle^{b}J^{s}f\|_{L^2}.
\end{aligned}
\end{equation*} 
\end{lemma}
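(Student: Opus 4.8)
\textbf{Proof proposal for Lemma~\ref{lemmachangingDbyJ}.}

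The plan is to reduce the homogeneous Riesz derivative $D^s$ to the nonhomogeneous Bessel derivative $J^s$ by writing $D^s = J^s \circ (D^s J^{-s})$, and then to commute the weight $\langle x \rangle^b$ past the zeroth-order multiplier $D^s J^{-s}$ at the cost of controllable error terms. The multiplier $D^s J^{-s}$ has symbol $|\xi|^s \langle \xi \rangle^{-s} = |\xi|^s (1+|\xi|^2)^{-s/2}$, which is bounded but, crucially, fails to be smooth at the origin because of the homogeneous factor $|\xi|^s$; this is exactly where the low-frequency correction term $\|J^{s-b}f\|_{L^2}$ will have to absorb the loss. So the first step I would take is to split frequency space: let $\phi$ be a smooth cutoff equal to $1$ near $\xi=0$ and supported in $|\xi|\le 2$, and write $f = P_{\mathrm{low}}f + P_{\mathrm{high}}f$ with $P_{\mathrm{low}} = \phi(D)$. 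On the high-frequency piece the symbol of $D^s J^{-s}$ is smooth and the operator behaves like a Calderón--Zygmund / Mihlin multiplier, so one expects $\|\langle x\rangle^b D^s P_{\mathrm{high}} f\|_{L^2} \lesssim \|\langle x\rangle^b J^s f\|_{L^2}$ modulo commutator terms; on the low-frequency piece $D^s$ is harmless and $\|\langle x\rangle^b D^s P_{\mathrm{low}} f\|_{L^2}$ should be dominated by $\|\langle x\rangle^b f\|_{L^2} + \|J^{s-b}f\|_{L^2}$.

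The key technical step is the commutator estimate. I would like to bound
\begin{equation*}
\|\langle x \rangle^{b} D^{s} f\|_{L^2} \lesssim \|D^{s}(\langle x \rangle^{b} f)\|_{L^2} + \|[\langle x\rangle^b, D^s] f\|_{L^2},
\end{equation*}
and then handle each piece. For the first term I would pass from $D^s$ to $J^s$ using Theorem~\ref{TheoSteDer} (or directly the relation $\|D^s g\|_{L^2}\lesssim \|J^s g\|_{L^2}$ for the homogeneous-below-nonhomogeneous comparison at high frequency together with an $L^2$ bound at low frequency), producing $\|J^s(\langle x\rangle^b f)\|_{L^2}$, which in turn I would compare to $\|\langle x\rangle^b J^s f\|_{L^2}$ by yet another commutator $[J^s,\langle x\rangle^b]$. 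The commutator $[\langle x\rangle^b, D^s]$ (and likewise for $J^s$) should be an operator of order $s-1$ in a weighted sense — morally $\nabla\langle x\rangle^b$ is of size $\langle x\rangle^{b-1}\le \langle x\rangle^b$ and costs one derivative — so a fractional Leibniz / Kato--Ponce type argument, or the Stein-derivative product rule \eqref{prelimneq} combined with the characterization in Theorem~\ref{TheoSteDer}, should yield a bound by $\|J^{s-b}f\|_{L^2} + \|\langle x\rangle^b f\|_{L^2}$ after interpolating the weight against the derivative via Proposition~\ref{propweight2}.

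The step I expect to be the main obstacle is precisely this fractional commutator between a fractional weight $\langle x\rangle^b$ and a fractional derivative $D^s$ (or $J^s$): because $b$ need not be an integer, one cannot simply invoke the Leibniz rule and count derivatives, and the nonsmoothness of $|\xi|^s$ at the origin means the naive pseudodifferential symbol calculus does not apply cleanly. I would therefore route the commutator through the Stein derivative $\mathcal{D}^b$ or $\mathcal{D}^s$ and the pointwise inequalities \eqref{prelimneq}--\eqref{prelimneq1}, using the equivalence $\|J^s g\|_{L^2}\sim \|g\|_{L^2}+\|\mathcal{D}^s g\|_{L^2}$ to convert everything into manageable singular integrals, and balancing the weight and derivative exponents by the interpolation inequalities \eqref{eq1propweight2}--\eqref{eq2propweight2}. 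The three terms on the right-hand side of the lemma are then exactly the outputs of this scheme: $\|\langle x\rangle^b f\|_{L^2}$ from the low-order/low-frequency contributions, $\|J^{s-b}f\|_{L^2}$ from the derivative lost to the commutator, and $\|\langle x\rangle^b J^s f\|_{L^2}$ as the principal weighted-derivative term.
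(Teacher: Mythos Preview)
Your proposal is correct in outline and identifies the right obstacle (the singularity of $|\xi|^s$ at the origin combined with a fractional weight), but the paper takes a more direct route that avoids the commutator bookkeeping. Rather than writing $\|\langle x\rangle^b D^s f\|_{L^2}$ as a commutator plus $\|D^s(\langle x\rangle^b f)\|_{L^2}$, the paper immediately passes to the Fourier side via Plancherel, so that the target becomes $\|J^b_\xi(|\xi|^s\widehat{f}\,)\|_{L^2}$: the weight $\langle x\rangle^b$ turns into the Bessel derivative $J^b_\xi$ acting on $|\xi|^s\widehat{f}$. One then splits $b=b_1+b_2$ with $b_1\in\mathbb{Z}^+\cup\{0\}$ and $b_2\in[0,1)$, applies the ordinary Leibniz rule for the $b_1$ integer derivatives, and uses the Stein-derivative product rule \eqref{prelimneq} only for the remaining fractional order $b_2$. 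The frequency cutoff $\phi$ enters exactly where you place it, but applied to the symbol $|\xi|^s\langle\xi\rangle^{-s}$ rather than to $f$: on the support of $1-\phi$ this quotient and all its derivatives are bounded, while on the support of $\phi$ one proves a pointwise claim $\mathcal{D}^{b_2}_\xi\big(\partial^\beta(|\xi|^s)\phi\big)\lesssim\langle\xi\rangle^{s-b_1-b_2}$ for $|\beta|=b_1$ by a direct estimate of the integral defining $\mathcal{D}^{b_2}$. That pointwise bound is what produces the $\|J^{s-b}f\|_{L^2}$ term cleanly, and no interpolation via Proposition~\ref{propweight2} is needed. Your commutator approach would ultimately unwind to essentially the same Fourier-side computation, so the difference is organizational rather than mathematical; the paper's framing has the advantage of making the singular low-frequency contribution explicit and isolating it in a single pointwise claim rather than spreading it across two commutators $[\langle x\rangle^b,D^s]$ and $[J^s,\langle x\rangle^b]$.
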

\begin{proof}
We write $b=b_1+b_2$, where $b_1 \in \mathbb{Z}^{+}\cup\{0\}$ and $b_2\in [0,1)$. Then by Plancherel's identity and the linearity of the operator $D^{b_2}$, we get
\begin{equation}\label{eqlemmaopD1}
\begin{aligned}
\|J^{b}_{\xi}\big(|\xi|^{s}\widehat{f} \, \big)\|_{L^2}\lesssim \sum_{0\leq |\beta|\leq b_1}\|\partial^{\beta}\big(|\xi|^{s}\widehat{f} \, \big)\|_{L^2}+\|D^{b_2}\big(\partial^{\beta}\big(|\xi|^{s}\widehat{f} \, \big)\big)\|_{L^2}.
\end{aligned}
\end{equation}
When $b_2=0$, we will assume that $D^{b_2}$ is the identity operator. Let us bound each term on the right-hand side of the above inequality. By Leibniz's rule, we have
\begin{equation*}
\begin{aligned}
\sum_{|\beta| \leq b_1}\|\partial^{\beta}\big(|\xi|^{s}\widehat{f} \,\big)\|_{L^2}=&\sum_{|\beta| \leq b_1}\|\partial^{\beta}\big(\frac{|\xi|^{s}}{\langle \xi \rangle^{s}}\langle \xi \rangle^{s}\widehat{f} \, \big)\|_{L^2}\\
\lesssim &\sum_{|\beta| \leq b_1}\sum_{\beta_1+\beta_2=\beta}\|\partial^{\beta_1}\big(\frac{|\xi|^{s}}{\langle \xi \rangle^{s}}\big)\|_{L^{\infty}}\|\partial^{\beta_2}\big(\langle \xi \rangle^{s}\widehat{f} \, \big)\|_{L^2}\\
\lesssim & \|J^{b_1}_{\xi}\big(\langle \xi \rangle^{s}\widehat{f} \, \big)\|_{L^2},
\end{aligned}
\end{equation*}
where given that $s-b>0$, we have used that 
\begin{equation*}
\|\partial^{\beta_1}\big(\frac{|\xi|^{s}}{\langle \xi \rangle^{s}}\big)\|_{L^{\infty}} \lesssim 1, 
\end{equation*}
for all $|\beta_1|\leq b_1$. To deal with the second term on the right-hand side of \eqref{eqlemmaopD1}, we consider a real-valued smooth function $\phi$, compactly supported in the set $|\xi|\leq 2$, such that $\phi(\xi)=1$ for $|\xi|\leq 1$. Then, we split that term as follows
\begin{equation*}
\begin{aligned}
\|D^{b_2}\big(\partial^{\beta}\big(|\xi|^{s}\widehat{f} \, \big)\big)\|_{L^2}=&\|D^{b_2}\big(\partial^{\beta}\big(|\xi|^{s}\phi\widehat{f} \, \big)\big)\|_{L^2}+\|D^{b_2}\big(\partial^{\beta}\big(|\xi|^{s}(1-\phi)\widehat{f} \, \big)\big)\|_{L^2}\\
=:&\mathcal{A}_{1}(\beta)+\mathcal{A}_2(\beta),
\end{aligned}
\end{equation*}
for each $|\beta|\leq b_1$. We apply property \eqref{prelimneq} and Theorem \ref{TheoSteDer} to get
\begin{equation*}
\begin{aligned}
\mathcal{A}_2(\beta) \lesssim &  \sum_{\beta_1+\beta_2=\beta} \|D^{b_2}\big(\partial^{\beta_1}\big(\frac{|\xi|^{s}(1-\phi)}{\langle \xi \rangle^{s}}\big)\partial^{\beta_2}\big(\langle \xi \rangle^{s}\widehat{f} \, \big)\big)\|_{L^2}\\
\lesssim & \sum_{\beta_1+\beta_2=\beta} \Big(\|\partial^{\beta_1}\big(\frac{|\xi|^{s}(1-\phi)}{\langle \xi \rangle^{s}}\big)\|_{L^{\infty}}+\|\mathcal{D}^{b_2}_{\xi}\big(\partial^{\beta_1}\big(\frac{|\xi|^{s}(1-\phi)}{\langle \xi \rangle^{s}}\big)\big)\|_{L^{\infty}}\Big)\\
&\hspace{5cm} \times \big(\|\partial^{\beta_2}\big(\langle \xi \rangle^{s}\widehat{f}\big)\|_{L^2}+\|\mathcal{D}^{b_2}_{\xi}\big(\partial^{\beta_2}\big(\langle \xi \rangle^{s}\widehat{f} \, \big)\big)\|_{L^2}\big)\\
\lesssim & \|J_{\xi}^{b_1+b_2}\big(\langle \xi \rangle^{s}\widehat{f} \, \big)\|_{L^2},
\end{aligned}
\end{equation*}
where we have used property \eqref{prelimneq1} to deduce
\begin{equation*}
\begin{aligned}
\|\mathcal{D}_{\xi}^{b_2}\big(\partial^{\beta_1}\big(\frac{|\xi|^{s}(1-\phi)}{\langle \xi \rangle^{s}}\big)\big)\|_{L^{\infty}}\lesssim \|\partial^{\beta_1}\big(\frac{|\xi|^{s}(1-\phi)}{\langle \xi \rangle^{s}}\big)\|_{L^{\infty}}+\|\nabla\big(\partial^{\beta_1}\big(\frac{|\xi|^{s}(1-\phi)}{\langle \xi \rangle^{s}}\big)\big)\|_{L^{\infty}}<\infty.
\end{aligned}
\end{equation*}
We remark that when $b_2=0$, we will assume that $\mathcal{D}^{b_2}_{\xi}$ denotes the identity operator. This completes the study of $\mathcal{A}_2(\beta)$, whenever $|\beta|\leq b_1$. Next, we turn to $\mathcal{A}_1(\beta)$. Assuming that $|\beta| \geq 1$, by Theorem \ref{TheoSteDer}, \eqref{prelimneq} and \eqref{prelimneq1}, we have
\begin{equation*}
\begin{aligned}
\mathcal{A}_1(\beta)\lesssim &\sum_{\substack{\beta_1+\beta_2+\beta_3=\beta \\ \beta_1 \neq \beta}}\|D^{b_2}\big(\partial^{\beta_1}(|\xi|^{s})\partial^{\beta_2}\phi\partial^{\beta_3}\widehat{f} \, \big)\|_{L^2}+\|D^{b_2}\big(\partial^{\beta}(|\xi|^{s})\phi\widehat{f} \, \big)\|_{L^2}\\
\lesssim & \sum_{\substack{\beta_1+\beta_2+\beta_3=\beta \\ \beta_1 \neq \beta}} \Big(\|\partial^{\beta_1}(|\xi|^{s})\partial^{\beta_2}\phi\|_{L^{\infty}}+\|\nabla \partial^{\beta_1}(|\xi|^{s})\partial^{\beta_2}\phi\|_{L^{\infty}}\Big)\big(\|\partial^{\beta_3}\widehat{f} \, \|_{L^2}+\|\mathcal{D}^{b_2}\partial^{\beta_3}\widehat{f} \, \|_{L^2}\big)\\
& \qquad \quad +\|D^{b_2}\big(\partial^{\beta}(|\xi|^{s})\phi\widehat{f}\,\big)\|_{L^2}\\
\lesssim &\|J_{\xi}^{b_1+b_2}\widehat{f}\, \|_{L^2}+\|D^{b_2}\big(\partial^{\beta}(|\xi|^{s})\phi\widehat{f}\,\big)\|_{L^2}.
\end{aligned}
\end{equation*}
Notice that the conclusion of the above inequality is also valid for $\beta=0$.  Now, if $|\beta|<b_1$ (assuming that $b_1>0$), it follows that $\nabla\big(\partial^{\beta}(|\xi|^{s})\phi\big)\in L^{\infty}(\mathbb{R}^N)$. Then the estimate for $\|D^{b_2}\big(\partial^{\beta}(|\xi|^{s})\phi\widehat{f}\, \big)\|_{L^2}$ follows from the same arguments as above. Otherwise, if $|\beta|=b_1\geq 0$, we require the following claim.
\begin{claim}\label{claimSderiv}
Let  $b_1\in \mathbb{Z}^{+}\cup\{0\}$,  $b_2\in [0,1)$, and $s>b_1+b_2$. Then for any multi-index $\beta$ with $|\beta|=b_1$, it follows that 
\begin{equation*}
\begin{aligned}
\mathcal{D}_{\xi}^{b_2}\big(\partial^{\beta}(|\xi|^{s})\phi(\xi) \big)\lesssim \langle \xi \rangle^{s-b_1-b_2}, \, \, \xi \neq 0,
\end{aligned}
\end{equation*}
when $b_2=0$, we assume that $\mathcal{D}_{\xi}^{b_2}$ is the identity operator.
\end{claim}
Before proving Claim \ref{claimSderiv}, let us complete the estimate for the case $|\beta|=b_1$ in $\mathcal{A}_1(\beta)$. Indeed, by Theorem \ref{TheoSteDer}, property \eqref{prelimneq}, and Claim \ref{claimSderiv}, we get
\begin{equation*}
\begin{aligned}
\|D^{b_2}\big(\partial^{\beta}(|\xi|^{s})\phi\widehat{f} \, \big)\|_{L^2} \lesssim &\|\partial^{\beta}(|\xi|^{s})\phi\widehat{f} \, \|_{L^2}+\|\mathcal{D}^{b_2}_{\xi}\big(\partial^{\beta}(|\xi|^{s})\phi\big)\widehat{f} \, \|_{L^2}+\|\partial^{\beta}(|\xi|^{s})\phi\mathcal{D}_{\xi}^{b_2}(\widehat{f} \, ) \|_{L^2} \\
\lesssim & \|J^{b_2}_{\xi}\widehat{f} \, \|_{L^2}+\|\langle \xi \rangle^{s-b_1-b_2} \widehat{f} \, \|_{L^2}.
\end{aligned}
\end{equation*}
Collecting the estimates for $\mathcal{A}_1(\beta)$ and $\mathcal{A}_2(\beta)$, $|\beta|\leq b_1$, and reversing the Fourier variables, we complete the proof of the lemma.
\end{proof}

\begin{proof}[Proof of Claim \ref{claimSderiv}]
At once we deduce the case $b_2=0$, hence, we assume $b_2>0$. To simplify the exposition of our arguments, we denote by $F(\xi):=\partial^{\beta}(|\xi|^{s})\phi(\xi)$. By the definition of the fractional derivative \eqref{Steinsquare}, we divide our considerations in the following manner
\begin{equation*}
\begin{aligned}
\mathcal{D}_{\xi}^{b_2}\big(F(\xi) \big)^2=\int_{\mathbb{R}^N} \frac{|F(\xi)-F(y)|^2}{|\xi-y|^{N+2b_2}}\, dy=&\int_{|y|\leq 2|\xi|}(\cdots)\, dy+\int_{|y|\geq 2|\xi|}(\cdots)\, dy\\
=:&\mathcal{B}_1+\mathcal{B}_2.
\end{aligned}
\end{equation*}
In the support of the integral $\mathcal{B}_2$, we have $|y|\gg |\xi|$ and so $|\xi-y|\sim |y|$. Then, since $\phi(y)$ is supported in the set $|y|\leq 2$, we get
\begin{equation*}
\begin{aligned}
\mathcal{B}_2 \lesssim & \int_{|y|\geq 2|\xi|} \frac{|F(\xi)|^2}{|y|^{N+2b_2}} \, dy+\int_{|y|\geq 2|\xi|} \frac{|F(y)|^2}{|y|^{N+2b_2}} \, dy \\
\lesssim & \int_{|y|\geq 2|\xi|} \frac{|\xi|^{2s-2b_1}}{|y|^{N+2b_2}} \, dy+\int_{ 2|\xi|\leq |y|\leq 2} \frac{|y|^{2s-2b_1}}{|y|^{N+2b_2}} \, dy\\
\lesssim & \langle \xi \rangle^{2s-2b_1-2b_2},
\end{aligned}
\end{equation*}
where we have also used that $|\partial^{\beta}(|\xi|^{s})|\lesssim |\xi|^{s-b_1}$, $|\beta|=b_1$. Next, in virtue of the identity
\begin{equation*}
\begin{aligned}
\partial^{\beta}(|\xi|^{s})=|\xi|^{s-2|\beta|}P_{|\beta|}(\xi), \qquad  \xi \in \mathbb{R}^{N}\setminus\{0\},
\end{aligned}
\end{equation*}
where $P_{|\beta|}(\xi)$ denotes a homogeneous polynomial of degree $|\beta|$, we write
\begin{equation*}
\begin{aligned}
F(\xi)-F(y)=&\big(|\xi|^{s-2|\beta|}P_{|\beta|}(\xi)-|y|^{s-2|\beta|}P_{|\beta|}(y) \big)\phi(\xi)+|y|^{s-2|\beta|}P_{|\beta|}(y)\big(\phi(\xi)-\phi(y)\big)\\
=&|\xi|^{s-2|\beta|}(P_{|\beta|}(\xi)-P_{|\beta|}(y))\phi(\xi)+|\xi|^{s-2|\beta|}|y|^{-|\beta|}P_{|\beta|}(y)(|y|^{|\beta|}-|\xi|^{|\beta|})\phi(\xi)\\
&+|y|^{-|\beta|}P_{|\beta|}(y)(|\xi|^{s-|\beta|}-|y|^{s-|\beta|})\phi(\xi)+|y|^{s-2|\beta|}P_{|\beta|}(y)\big(\phi(\xi)-\phi(y)\big).
\end{aligned}
\end{equation*}
Then, since $|y|\leq 2|\xi|$ in the support of the integral defining $\mathcal{B}_1$, we have
\begin{equation*}
\begin{aligned}
|F(\xi)&-F(y)|\\
\lesssim &\big(|\xi|^{s-|\beta|-1}|\xi-y|+||\xi|^{s-|\beta|}-|y|^{s-|\beta|}|\big)\|\phi\|_{L^{\infty}}+|y|^{s-|\beta|}|\phi(\xi)-\phi(y)| \\
\lesssim & \left\{ \begin{array}{ll}
\big|\xi|^{s-|\beta|-1}|\xi-y|+ |\xi-y|^{s-|\beta|}+|\xi|^{s-|\beta|}|\phi(\xi)-\phi(y)|, & \text{ if } 0<s-|\beta| \leq 1, \\
|\xi|^{s-|\beta|-1}|\xi-y|+|\xi|^{s-|\beta|}|\phi(\xi)-\phi(y)|, & \text{ if } s-|\beta|>1.
\end{array} \right.
\end{aligned}
\end{equation*}
Hence, we arrive at
\begin{equation*}
\begin{aligned}
\mathcal{B}_2 \lesssim &|\xi|^{2s-2|\beta|-2} \int_{|y|\lesssim 2|\xi|}|\xi-y|^{2-N-2b_2}\, dy+\int_{|y|\lesssim 2|\xi|}|\xi-y|^{-N+2s-2|\beta|-2b_2}\, dy \\
&+|\xi|^{2s-2|\beta|} \big(\mathcal{D}_{\xi}^{b_2}(\phi)\big)^2(\xi)\\
\lesssim & \langle \xi\rangle^{2s-2|\beta|-2b_2}.
\end{aligned}
\end{equation*}
The proof is complete.
\end{proof}

%%%%%%%%%%%%%%%%%%%%%%%%%%%%%%%%%%%%%%%%%%%%%%%%%%%%%%%%%%%%%%%%%%%%%%%%%%%%%%%%%%%%%%%%%%%%%%%%%%%%%%%%%%%%%%%%%%%%%%%%%%%%%%%%%%%%%%%%%%%%%%%%%%%%%%%%%%%%%%%%%%%%%%%%%%%%%%%%%%%%%%%%%%%%%%%%%%%%%%%%%%%%%%%%%%%%%%%%%%%%%%%%%%%%%%%%%%%%%%%%%%%%%%%%%%%

\subsection{Some weighted estimates for the linear equation}

This part is intended to provide weighted $L^2$ estimates for solutions of the homogeneous Schr\"odinger equation. We begin by recalling the following result. 
\begin{prop}\label{derivexp} 
Let $b\in(0,1)$. For any $|t|>0$,
\begin{equation*}
\mathcal{D}^b\big(e^{it|x|^2}\big)\lesssim |t|^{b/2}+|t|^b|x|^b.
\end{equation*}
\end{prop}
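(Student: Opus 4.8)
The plan is to estimate the square $\mathcal{D}^b(e^{it|x|^2})(x)^2$ directly from the integral definition \eqref{Steinsquare} by bounding the difference quotient and then reducing to a one-dimensional radial integral. Writing $f(x)=e^{it|x|^2}$ and factoring out $e^{it|x|^2}$, I get $|f(x)-f(y)|^2=|1-e^{it(|y|^2-|x|^2)}|^2$. The elementary identity $|1-e^{i\theta}|^2=4\sin^2(\theta/2)\le \min\{4,\theta^2\}$, applied with $\theta=t(|y|^2-|x|^2)$, together with the factorization $|y|^2-|x|^2=(y-x)\cdot(y+x)$ and the bound $|y+x|\le 2|x|+|x-y|$, yields
\[
|f(x)-f(y)|^2 \le \min\bigl\{4,\; t^2\,|x-y|^2\,(2|x|+|x-y|)^2\bigr\}.
\]

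Next I would change variables $z=y-x$, pass to polar coordinates, and perform the angular integration (which contributes only a dimensional constant), reducing the estimate to the radial integral
\[
\mathcal{D}^b(e^{it|x|^2})(x)^2 \lesssim \int_0^\infty \frac{\min\{4,\; t^2 r^2(2|x|+r)^2\}}{r^{1+2b}}\,dr.
\]
Using $(2|x|+r)^2\lesssim |x|^2+r^2$ and the subadditivity $\min\{4,X+Y\}\le \min\{4,X\}+\min\{4,Y\}$, I split the right-hand side into
\[
I_1=\int_0^\infty \frac{\min\{4,\; t^2|x|^2 r^2\}}{r^{1+2b}}\,dr
\qquad\text{and}\qquad
I_2=\int_0^\infty \frac{\min\{4,\; t^2 r^4\}}{r^{1+2b}}\,dr.
\]

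Each integral is then evaluated by splitting the radial variable at the threshold where the two entries of the minimum coincide, namely $r_1\sim (|t|\,|x|)^{-1}$ for $I_1$ and $r_0\sim |t|^{-1/2}$ for $I_2$. On $r<r_j$ the integrand is a positive power of $r$, while on $r>r_j$ it decays like $r^{-1-2b}$; the hypothesis $0<b<1$ is exactly what forces every resulting exponent to be admissible ($1-2b>-1$ and $3-2b>-1$ near the origin, $-1-2b<-1$ at infinity), so all the integrals converge. A direct computation gives $I_2\sim |t|^b$ and $I_1\sim |t|^{2b}|x|^{2b}$, and taking square roots with $\mathcal{D}^b(e^{it|x|^2})\lesssim I_1^{1/2}+I_2^{1/2}$ produces the claimed bound $|t|^{b/2}+|t|^b|x|^b$. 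The only step needing genuine care is the handling of the cross term $(2|x|+r)^2$ and the bookkeeping of the two independent thresholds $r_0,r_1$; once these are separated, the use of the constraint $0<b<1$ to guarantee convergence at both endpoints is the crux, and the remaining angular integration and power-law computations are routine.
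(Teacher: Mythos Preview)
Your proof is correct. The paper does not supply its own argument for this proposition; it simply refers the reader to \cite[Proposition 2]{NahasPo}. Your direct computation from the integral definition \eqref{Steinsquare}, using the bound $|1-e^{i\theta}|^2\le\min\{4,\theta^2\}$, the factorization $|y|^2-|x|^2=(y-x)\cdot(y+x)$, and the split at the two scales $r_0\sim|t|^{-1/2}$ and $r_1\sim(|t|\,|x|)^{-1}$, is essentially the standard proof and matches what one finds in the cited reference. The check that $0<b<1$ makes both endpoint integrals converge (via $1-2b>-1$ and $-1-2b<-1$) is exactly the point, and you have it right.
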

For a proof of Proposition \ref{derivexp}, see \cite[Proposition 2]{NahasPo}. We also require an extension to weights of arbitrary size of the result obtained by Nahas and Ponce in \cite[Lemma 2]{NahasPo}. For the sake of completeness, we provide the proof of this extension. 
\begin{lemma}\label{derivexp2}
Let $b \in \mathbb{R}^{+}$. Then for any $t\in \mathbb{R}$
\begin{equation*}
\|\langle x \rangle^{b}e^{it\Delta}f\|_{L^2} \lesssim \langle t \rangle^b\big( \|J^{b}f\|_{L^2}+\|\langle x\rangle^b f\|_{L^2} \big),
\end{equation*}
where the implicit constant above is independent of $t$.
\end{lemma}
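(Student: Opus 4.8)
The plan is to pass to the Fourier side, where the weight turns into a Bessel potential and the Schr\"odinger group into the multiplier $e^{-it|\xi|^2}$, and then to reduce matters to the fractional Leibniz rule \eqref{prelimneq} together with Proposition \ref{derivexp}. The starting point is the Plancherel identity
\begin{equation*}
\|\langle x \rangle^{b}e^{it\Delta}f\|_{L^2}=\|J^{b}_{\xi}\big(e^{-it|\xi|^2}\widehat{f}\,\big)\|_{L^2},
\end{equation*}
which follows because multiplication by $\langle x\rangle^{b}=(1+|x|^2)^{b/2}$ is conjugated by the Fourier transform to $J^{b}_{\xi}=(1-\Delta_{\xi})^{b/2}$, while $\widehat{e^{it\Delta}f}=e^{-it|\xi|^2}\widehat f$. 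For $b\in(0,1)$ this is essentially the computation of Nahas and Ponce \cite[Lemma 2]{NahasPo}; the task is to extend it to arbitrary $b>0$, which I would do by writing $b=b_1+b_2$ with $b_1\in\mathbb{Z}^{+}\cup\{0\}$ and $b_2\in[0,1)$ and running a bookkeeping argument on the derivatives of the oscillatory factor.

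Next I would decompose the Bessel potential exactly as in the proof of Lemma \ref{lemmachangingDbyJ}: by Theorem \ref{TheoSteDer} and Plancherel,
\begin{equation*}
\|J^{b}_{\xi}H\|_{L^2}\lesssim \sum_{|\beta|\leq b_1}\big(\|\partial^{\beta}_{\xi}H\|_{L^2}+\|\mathcal{D}^{b_2}_{\xi}\partial^{\beta}_{\xi}H\|_{L^2}\big),
\end{equation*}
applied to $H=e^{-it|\xi|^2}\widehat f$ (when $b_2=0$ the operator $\mathcal{D}^{b_2}_{\xi}$ is read as the identity, as in the earlier proofs). Expanding $\partial^{\beta}_{\xi}(e^{-it|\xi|^2}\widehat f)$ by the Leibniz rule produces a sum of terms $e^{-it|\xi|^2}Q_{\beta'}(t,\xi)\,\partial^{\beta''}_{\xi}\widehat f$ with $\beta'+\beta''=\beta$, where each monomial of the polynomial $Q_{\beta'}$ has the form $t^{m}\xi^{\delta}$ with $2m-|\delta|=|\beta'|$, so that $m\leq|\beta'|\leq b_1$ and $|\delta|+|\beta''|\leq|\beta|\leq b_1$. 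Since each building block $\xi^{\delta}\partial^{\beta''}_{\xi}\widehat f$ is the Fourier transform of $\pm\partial^{\delta}(x^{\beta''}f)$, on the physical side the combined derivative-and-weight order never exceeds $b_1$ and every coefficient is $O(\langle t\rangle^{b_1})$; this at once disposes of the non-fractional terms $\|\partial^{\beta}_{\xi}H\|_{L^2}$.

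For the fractional terms I would use that $\mathcal{D}^{b_2}_{\xi}$ is pointwise sublinear, reducing to the estimate of $\|\mathcal{D}^{b_2}_{\xi}(e^{-it|\xi|^2}g)\|_{L^2}$ for $g=\xi^{\delta}\partial^{\beta''}_{\xi}\widehat f$. The Leibniz rule \eqref{prelimneq} gives
\begin{equation*}
\|\mathcal{D}^{b_2}_{\xi}(e^{-it|\xi|^2}g)\|_{L^2}\lesssim \|\mathcal{D}^{b_2}_{\xi}g\|_{L^2}+\|g\,\mathcal{D}^{b_2}_{\xi}(e^{-it|\xi|^2})\|_{L^2},
\end{equation*}
where $|e^{-it|\xi|^2}|=1$ has been used in the first term. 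The second term carries the main work: Proposition \ref{derivexp} bounds $\mathcal{D}^{b_2}_{\xi}(e^{-it|\xi|^2})\lesssim|t|^{b_2/2}+|t|^{b_2}|\xi|^{b_2}$, so it contributes $|t|^{b_2/2}\|g\|_{L^2}+|t|^{b_2}\||\xi|^{b_2}g\|_{L^2}$. Passing back to physical space, these three quantities are $\||x|^{b_2}\partial^{\delta}(x^{\beta''}f)\|_{L^2}$, $\|\partial^{\delta}(x^{\beta''}f)\|_{L^2}$ and $\|D^{b_2}\partial^{\delta}(x^{\beta''}f)\|_{L^2}$. Because $|\delta|+|\beta''|\leq b_1$, each has total order at most $b_1+b_2=b$, while the accompanying power of $|t|$ is at most $m+b_2\leq b$, hence dominated by $\langle t\rangle^{b}$.

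Finally I would close the estimate by interpolation: after a routine Leibniz expansion each resulting norm is of the form $\|\langle x\rangle^{a}J^{c}f\|_{L^2}$ with $a+c\leq b$, and \eqref{eq1propweight2} in Proposition \ref{propweight2}, combined with Young's inequality, bounds it by $\|J^{b}f\|_{L^2}+\|\langle x\rangle^{b}f\|_{L^2}$ (reducing to the endpoint $a+c=b$ by the monotonicity $J^{c}\lesssim J^{b-a}$ when $a+c<b$). Collecting all the terms then yields the claimed inequality with the uniform factor $\langle t\rangle^{b}$. I expect the only delicate point to be the fractional step above, namely controlling $\mathcal{D}^{b_2}_{\xi}$ acting on the oscillatory factor $e^{-it|\xi|^2}$ and verifying that, once the Gaussian derivatives are expanded, the combined derivative/weight order stays $\leq b$ and the powers of $t$ never exceed $b$; Proposition \ref{derivexp} is precisely the tool for this, and the remainder is Leibniz and interpolation bookkeeping.
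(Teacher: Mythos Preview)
Your proposal is correct and follows essentially the same approach as the paper: Plancherel to the Fourier side, splitting $b=b_1+b_2$, Leibniz expansion of the derivatives of $e^{-it|\xi|^2}$, the fractional Leibniz rule \eqref{prelimneq} together with Proposition~\ref{derivexp} for the oscillatory factor, and Proposition~\ref{propweight2} to interpolate at the end. The paper carries out the bookkeeping entirely on the Fourier side (deriving an auxiliary inequality of the form \eqref{dimNeq2} along the way), while you pass back to physical space for the final interpolation; this is only a cosmetic difference.
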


\begin{proof}
The case $b\in (0,1)$  is proved in \cite[Lemma 2]{NahasPo}. Hence, we assume that $b\geq 1$. We write $b=b_1+b_2$, where $b_1\in \mathbb{Z}^{+}$, $b_2\in [0,1)$. Then by Plancherel's identity and Leibniz's rule, we get 
\begin{equation*}
\begin{aligned}
\|\langle x\rangle^{b}e^{it\Delta}f\|_{L^2}\lesssim & \|e^{it\Delta}f\|_{L^2}+\||x|^{b_2}|x|^{b_1}e^{it\Delta}f\|_{L^2} \\
\lesssim & \|f\|_{L^2}+\sum_{j=1}^N\|D^{b_2}\big(\partial_{\xi_j}^{b_1}(e^{-it|\xi|^2}\widehat{f} \, )\big)\|_{L^2}\\
\lesssim & \|f\|_{L^2}+\sum_{j=1}^N\sum_{k=0}^{b_1}\|D^{b_2}\big(\partial_{\xi_j}^{k}(e^{-it|\xi|^2})\partial_{\xi_j}^{b_1-k}\widehat{f} \, )\big)\|_{L^2}.
\end{aligned}
\end{equation*}
Thus, we are reduced to control the second term on the right-hand side of the above inequality. For a given multi-index $\beta$, we use the following straightforward identity
\begin{equation*}
\begin{aligned}
\partial^{\beta}(e^{-it|\xi|^2})=e^{-it|\xi|^2}\sum_{l=0}^{\lfloor|\beta|/2\rfloor}t^{|\beta|-l}P_{|\beta|-2l}(\xi),
\end{aligned}
\end{equation*}
where $P_{l}(\xi)$ denotes a homogeneous polynomial of order $l \geq 0$ in the variables $\xi \in \mathbb{R}^N$.  Thus, by Theorem \ref{TheoSteDer}, property \eqref{prelimneq} and Proposition \ref{derivexp}, we have
\begin{equation}\label{dimNeq1}
\begin{aligned}
\sum_{j=1}^N\sum_{k=0}^{b_1} &\|D^{b_2}\big(\partial_{\xi_j}^{k}(e^{-it|\xi|^2})\partial_{\xi_j}^{b_1-k}\widehat{f} \, ) \big)\|_{L^2}\\
\lesssim & \sum_{j=1}^N\sum_{k=0}^{b_1}\sum_{l=0}^{\lfloor k/2 \rfloor}\langle t \rangle^{b_1}\|D^{b_2}\big(e^{-it|\xi|^2}P^j_{k-2l}(\xi)\partial_{\xi_j}^{b_1-k}\widehat{f} \, \big)\|_{L^2} \\
\lesssim & \sum_{j=1}^N\sum_{k=0}^{b_1}\sum_{l=0}^{\lfloor k/2 \rfloor}\langle t \rangle^{b_1+b_2}\Big(\|P^j_{k-2l}(\xi)\partial_{\xi_j}^{b_1-k}\widehat{f} \, \|_{L^2}+\||\xi|^{b_2}P^j_{k-2l}(\xi)\partial_{\xi_j}^{b_1-k}\widehat{f} \, \|_{L^2}\\
& \hspace{7cm}+\|\mathcal{D}^{b_2}_{\xi}\big(P^j_{k-2l}(\xi)\partial_{\xi_j}^{b_1-k}\widehat{f}\big)\|_{L^2} \Big)\\
\lesssim & \sum_{j=1}^N\sum_{k=0}^{b_1}\sum_{l=0}^{\lfloor k/2 \rfloor}\langle t \rangle^{b_1+b_2}\Big(\|\langle \xi\rangle^{b_2+k-2l}\partial_{\xi_j}^{b_1-k}\widehat{f} \, \|_{L^2}+\|D^{b_2}\big(\langle\xi\rangle^{k-2l}\partial_{\xi_j}^{b_1-k}\widehat{f}\, \big)\|_{L^2} \Big),
\end{aligned}
\end{equation}
where we have used properties \eqref{prelimneq} and \eqref{prelimneq1} to deduce 
\begin{equation*}
\begin{aligned}
\|\mathcal{D}^{b_2}_{\xi}\big(P^j_{k-2l}(\xi)&\partial_{\xi_j}^{b_1-k}\widehat{f} \, \big)\|_{L^2}\\
&=\|\mathcal{D}^{b_2}_{\xi}\big(\langle\xi \rangle^{-k+2l}P^j_{k-2l}(\xi)\langle\xi \rangle^{k-2l}\partial_{\xi_j}^{b_1-k}\widehat{f} \, \big)\|_{L^2}\\
&\lesssim \Big(\|\langle\xi \rangle^{-k+2l}P^j_{k-2l}(\xi)\|_{L^{\infty}}+\|\mathcal{D}^{b_2}_{\xi}\big(\langle\xi \rangle^{-k+2l} P^j_{k-2l}(\xi)\big)\|_{L^{\infty}}  \Big)\|\mathcal{D}^{b_2}_{\xi}\big(\langle\xi \rangle^{k-2l}\partial_{\xi_j}^{b_1-k}\widehat{f}\, \big)\|_{L^2}.
\end{aligned}
\end{equation*}
At this point the very last term could be written via the term of the fractional operator $D^{b_2}$. As before, when $b_2=0$, we will assume that $\mathcal{D}^{b_2}_{\xi}$ and $D^{b_2}$ are the identity operator. Before estimating \eqref{dimNeq1}, let us deduce a more general inequality involving weights and derivatives. Let $b' \in [0,1)$, $m' \in \mathbb{R}^{+}\cup \{0\}$ and $k' \in \mathbb{Z}^{+}\cup \{0\}$, we claim  
\begin{equation}\label{dimNeq2}
\begin{aligned}
\|D^{b'} \big(\langle \xi \rangle^{m'}\partial_{\xi_j}^{k'}\widehat{f} \, \big)\|_{L^2} \lesssim  \sum_{\substack{0\leq l\leq k' \\ l<m'}} \|J^{b'+k'-l}_{\xi}\big(\langle \xi \rangle^{m'-l}\widehat{f} \, \big)\|_{L^2}+\|J^{b'+k'-m'}_{\xi}\widehat{f} \, \|_{L^2}.
\end{aligned}
\end{equation} 
Indeed, let us deduce \eqref{dimNeq2}.  By applying the identity
\begin{equation*}
\begin{aligned}
\langle \xi \rangle^{m'}\partial_{\xi_j}^{k'}\widehat{f}(\xi)=\sum_{l=0}^{k'} c_l \partial_{\xi_j}^{k'-l}\big(\partial_{\xi_j}^{l}\langle \xi \rangle^{m'} \widehat{f}(\xi) \big),
\end{aligned}
\end{equation*}
we split the left-hand side of \eqref{dimNeq2} as follows
\begin{equation*}
\begin{aligned}
\|D^{b'} \big(\langle \xi \rangle^{m'}\partial_{\xi_j}^{k'}\widehat{f} \, \big)\|_{L^2}\lesssim & \sum_{\substack{ 0\leq l \leq k' \\ l < m'}}\|D^{b'}\partial_{\xi_j}^{k'-l}\big(\frac{\partial_{\xi_j}^{l}\langle \xi \rangle^{m'}}{\langle \xi \rangle^{m'-l}}\langle \xi \rangle^{m'-l}\widehat{f} \, \big)\|_{L^2}+\sum_{\substack{m'\leq l \leq k' }}\|D^{b'}\partial_{\xi_j}^{k'-l}\big(\partial_{\xi_j}^{l}\langle \xi \rangle^{m'}\widehat{f} \, \big)\|_{L^2}\\
=& \,  \mathcal{I}+\mathcal{II}.
\end{aligned}
\end{equation*}
We proceed to estimate each term of the above expression.  Theorem \ref{TheoSteDer} and \eqref{prelimneq1} yield
\begin{equation*}
\begin{aligned}
\mathcal{I}\lesssim & \sum_{\substack{0\leq l \leq k' \\ l<m'}} \sum_{l'=0}^{k'-l}\|D^{b'}\Big(\partial_{\xi_j}^{l'}\big(\frac{\partial_{\xi_j}^{l}\langle \xi \rangle^{m'}}{\langle \xi \rangle^{m'-l}} \big)\partial_{\xi_j}^{k'-l-l'}\big(\langle \xi \rangle^{m'-l}\widehat{f} \, \big)\Big)\|_{L^2} \\
\lesssim & \sum_{\substack{0\leq l \leq k' \\ l<m'}} \sum_{l'=0}^{k'-l}\Big(\|\partial_{\xi_j}^{l'}\big(\frac{\partial_{\xi_j}^{l}\langle \xi \rangle^{m'}}{\langle \xi \rangle^{m'-l}} \big)\|_{L^{\infty}}+\|\mathcal{D}^{b'}_{\xi}\Big(\partial_{\xi_j}^{l'}\big(\frac{\partial_{\xi_j}^{l}\langle \xi \rangle^{m'}}{\langle \xi \rangle^{m'-l}} \big)\Big)\|_{L^{\infty}}\Big)\|J^{b'+k'-l-l'}_{\xi}\big(\langle \xi \rangle^{m'-l}\widehat{f} \, \big)\|_{L^2} \\
\lesssim & \sum_{\substack{0\leq l \leq k' \\ l<m'}} \|J^{b'+k'-l}_{\xi}\big(\langle \xi \rangle^{m'-l}\widehat{f} \, \big)\|_{L^2}.
\end{aligned}
\end{equation*}
By a similar argument, we get
\begin{equation*}
\begin{aligned}
\mathcal{II}\lesssim  \sum_{\substack{m'\leq l \leq k' }}\sum_{l'=0}^{k'-l}\|D^{b'}\big(\partial_{\xi_j}^{l+l'}\langle \xi \rangle^{m'} \partial_{\xi_j}^{k'-l-l'}\widehat{f} \, \big)\|_{L^2}\lesssim \|J^{b'+k'-m'}_{\xi}\widehat{f} \, \|_{L^2}.
\end{aligned}
\end{equation*}
Collecting the above estimates, we verify \eqref{dimNeq2}.

Now, let us estimate the first sum on the right-hand side of \eqref{dimNeq1}. Since the cases $k=b_1$ and $b_2+k-2l=0$ are easily verified, we assume $0\leq k \leq b_1-1$, $0\leq l \leq \lfloor k/2\rfloor$ and $b_2+k-2l>0$. Thus, these assumptions, \eqref{dimNeq2} and Proposition \ref{propweight2} yield
\begin{equation}\label{dimNeq3}
\begin{aligned}
\|\langle \xi\rangle^{b_2+k-2l}\partial_{\xi_j}^{b_1-k}\widehat{f} \, \|_{L^2} \lesssim & \sum_{\substack{0\leq l' < b_1-k\\ l' < b_2+k-2l}}\|J^{b_1-k-l'}_{\xi}\big(\langle \xi\rangle^{b_2+k-2l-l'}\widehat{f} \big)\|_{L^2}+\|\langle \xi \rangle^{b_1+b_2}\widehat{f} \, \|_{L^2}+\|J^{b_1+b_2}_{\xi}\widehat{f} \, \|_{L^2}\\
\lesssim & \sum_{\substack{0\leq l' < b_1-k\\ l' < b_2+k-2l}} \|\langle \xi \rangle^{b_1+b_2}\widehat{f} \, \|^{\frac{b_2+k-2l-l'}{b_1+b_2}}_{L^2}\|J^{\frac{(b_1-k-l')(b_1+b_2)}{b_1-k+2l+l'}}_{\xi}\widehat{f} \, \|^{\frac{b_1-k+2l+l'}{b_1+b_2}}_{L^2}\\
&+\|\langle \xi \rangle^{b_1+b_2}\widehat{f} \, \|_{L^2}+\|J^{b_1+b_2}_{\xi}\widehat{f}\, \|_{L^2}\\
\lesssim & \|\langle \xi \rangle^{b_1+b_2}\widehat{f} \, \|_{L^2}+\|J^{b_1+b_2}_{\xi}\widehat{f}\|_{L^2}+\|J^{\frac{(b_1-k-l')(b_1+b_2)}{b_1-k+2l+l'}}_{\xi}\widehat{f} \, \|_{L^2}.
\end{aligned}
\end{equation}
Since $\frac{b_1-k-l'}{b_1-k+2l+l'} \leq 1$, we have $\|J^{\frac{(b_1-k-l')(b_1+b_2)}{b_1-k+2l+l'}}_{\xi}\widehat{f} \, \|_{L^2} \leq \|J^{b_1+b_2}_{\xi}\widehat{f} \, \|_{L^2}$, and thus, the above estimate yields the desired result.

On the other hand, we assume $b_2>0$, $0<k\leq b_1$ and $0\leq l \leq \lfloor k/2 \rfloor$. Since the case $k-2l=0$ follows trivially, we assume $k-2l>0$. By the estimate \eqref{dimNeq2} and the interpolation inequality in Proposition \ref{propweight2}, we get
\begin{equation}\label{dimNeq4}
\begin{aligned}
\|D^{b_2}\big(\langle\xi\rangle^{k-2l}\partial_{\xi_j}^{b_1-k}\widehat{f} \, \big)\|_{L^2} \lesssim & \sum_{\substack{0\leq l' \leq b_1-k \\ l'<k-2l}}\|J^{b_1+b_2-k-l'}_{\xi}\big(\langle \xi \rangle^{k-2l-l'} \widehat{f} \, \big)\|_{L^2}+\|\langle \xi \rangle^{b_1+b_2}\widehat{f}\|_{L^2}+\|J^{b_1+b_2}_{\xi}\widehat{f} \, \|_{L^2} \\
\lesssim & \sum_{\substack{0\leq l' \leq b_1-k \\ l'<k-2l}}\|\langle \xi \rangle^{b_1+b_2}\widehat{f} \, \|_{L^2}^{\frac{k-2l-l'}{b_1+b_2}}\|J^{\frac{(b_1+b_2-k-l')(b_1+b_2)}{(b_1+b_2-k+2l+l')}}_{\xi}\widehat{f} \, \|_{L^2}^{\frac{b_1+b_2-k-2l+l'}{b_1+b_2}} \\
&+\|\langle \xi \rangle^{b_1+b_2}\widehat{f} \, \|_{L^2}+\|J^{b_1+b_2}_{\xi}\widehat{f}\, \|_{L^2}\\
\lesssim & \|\langle \xi \rangle^{b_1+b_2}\widehat{f}\, \|_{L^2}+\|J^{b_1+b_2}_{\xi}\widehat{f} \, \|_{L^2}.
\end{aligned}
\end{equation}
Plugging \eqref{dimNeq3} and \eqref{dimNeq4} into \eqref{dimNeq1}, and applying Plancherel's identity, we complete the proof. 
\end{proof}

We close this section presenting some $L^{\infty}(\mathbb{R}^N)$ estimates for solutions of the homogeneous equation associated to \eqref{gHartree}.

\begin{prop}\label{propinfnorm}
Let $b \in \mathbb{R}^{+}$, $k, K\in \mathbb{Z}^{+}\cup\{0\}$ and $s\geq K +2k+3+\lfloor\frac{N}{2} \rfloor+b$. Then for any $t\in \mathbb{R}$ and any multi-index $\beta$ of order $|\beta|\leq K$,
\begin{equation}\label{eqlinfy}
\begin{aligned}
\|\langle x \rangle^{b}&\partial^{\beta}e^{it\Delta}f\|_{L^{\infty}}\\
\lesssim & \langle t \rangle^{k} \sum_{|\alpha|\leq k}\|\langle x \rangle^{b}\partial^{\alpha}f\|_{L^{\infty}}+ \langle t \rangle^{k+1+b}\Big( \|J^{s}f\|_{L^2}+ \sum_{k< |\alpha| \leq K +2k+3+\lfloor\frac{N}{2} \rfloor}\|\langle x \rangle^{b}\partial^{\alpha}f\|_{L^{2}}\Big).
\end{aligned}
\end{equation}
\end{prop}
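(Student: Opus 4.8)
The plan is to reduce the weighted $L^{\infty}$ bound to weighted $L^{2}$ estimates on which Lemma \ref{derivexp2} can be brought to bear, after first peeling off the low-order-in-$t$ part of the evolution so that the cheap $L^{\infty}$ norms of $f$ survive. The two workhorses are the commutation $\partial^{\alpha}e^{it\Delta}=e^{it\Delta}\partial^{\alpha}$ (both are Fourier multipliers) and the weighted Sobolev inequality
\[
\|\langle x\rangle^{b}g\|_{L^{\infty}}\lesssim\sum_{|\delta|\le\lfloor N/2\rfloor+1}\|\langle x\rangle^{b}\partial^{\delta}g\|_{L^{2}},
\]
which I would obtain from $H^{\lfloor N/2\rfloor+1}\hookrightarrow L^{\infty}$ applied to $\langle x\rangle^{b}g$, Leibniz's rule, and the pointwise bound $|\partial^{\delta_{1}}\langle x\rangle^{b}|\lesssim\langle x\rangle^{b}$ (valid since $b>0$ and $\langle x\rangle\ge1$). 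All manipulations are first carried out for Schwartz $f$ and then extended by density.

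First I would Taylor expand the evolution in $t$ to order $k$ with integral remainder,
\[
e^{it\Delta}f=\sum_{j=0}^{k}\frac{(it)^{j}}{j!}\Delta^{j}f+\frac{i^{k+1}}{k!}\int_{0}^{t}(t-s)^{k}\,\Delta^{k+1}e^{is\Delta}f\,ds,
\]
and apply $\langle x\rangle^{b}\partial^{\beta}$ to both sides. For the remainder I would commute the derivatives inside the flow, apply the weighted Sobolev inequality above, and then Lemma \ref{derivexp2}, which produces $\langle s\rangle^{b}\big(\|J^{b}\partial^{\delta}\partial^{\beta}\Delta^{k+1}f\|_{L^{2}}+\|\langle x\rangle^{b}\partial^{\delta}\partial^{\beta}\Delta^{k+1}f\|_{L^{2}}\big)$ with $|\delta|\le\lfloor N/2\rfloor+1$. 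The top derivative order here is exactly $|\delta|+|\beta|+2(k+1)\le K+2k+3+\lfloor N/2\rfloor$, so the $J^{b}$ term is bounded by $\|J^{s}f\|_{L^{2}}$ precisely because $s\ge K+2k+3+\lfloor N/2\rfloor+b$, while the weighted term, having order at least $2k+2>k$, falls into the range $k<|\alpha|\le K+2k+3+\lfloor N/2\rfloor$ of the second sum. The time factor comes out as $\int_{0}^{t}|t-s|^{k}\langle s\rangle^{b}\,ds\lesssim\langle t\rangle^{k+1+b}$, matching the claimed weight.

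It remains to treat the polynomial part $\sum_{j\le k}\frac{(it)^{j}}{j!}\Delta^{j}f$, where I would split according to derivative order. Writing $\partial^{\beta}\Delta^{j}f$ as a combination of $\partial^{\gamma}f$ with $|\gamma|=|\beta|+2j$, the terms with $|\gamma|\le k$ are kept directly in weighted $L^{\infty}$ and contribute to $\langle t\rangle^{k}\sum_{|\alpha|\le k}\|\langle x\rangle^{b}\partial^{\alpha}f\|_{L^{\infty}}$ (using $\langle t\rangle^{j}\le\langle t\rangle^{k}$); the terms with $|\gamma|>k$ are passed through the weighted Sobolev inequality to land in $\|\langle x\rangle^{b}\partial^{\alpha}f\|_{L^{2}}$ with $k<|\alpha|\le K+2k+\lfloor N/2\rfloor+1\le K+2k+3+\lfloor N/2\rfloor$, absorbed with room to spare by $\langle t\rangle^{k}\le\langle t\rangle^{k+1+b}$. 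Collecting the two parts gives \eqref{eqlinfy}. The main obstacle is bookkeeping rather than analysis: one must choose the Taylor order to be exactly $k$ so that the remainder carries precisely $\Delta^{k+1}$ (hence the $2k+2$ extra derivatives that, together with the $\lfloor N/2\rfloor+1$ from Sobolev and the $K$ from $\partial^{\beta}$, produce the sharp index $K+2k+3+\lfloor N/2\rfloor$), and one must verify at each step that differentiating the weight only improves the estimate and that the converted low-order terms never leave the admissible derivative window.
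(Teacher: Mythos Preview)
Your proposal is correct and follows essentially the same route as the paper: Taylor expand $e^{it\Delta}$ to order $k$, keep the polynomial part in weighted $L^{\infty}$ for low derivative orders, and handle the integral remainder by the weighted Sobolev inequality followed by Lemma~\ref{derivexp2}. The only cosmetic difference is that the paper first bounds the entire polynomial part by $\langle t\rangle^{k}\sum_{|\alpha|\le K+2k}\|\langle x\rangle^{b}\partial^{\alpha}f\|_{L^{\infty}}$ and then, in the last line of \eqref{eq(III)4}, silently converts the terms with $|\alpha|>k$ to weighted $L^{2}$ via Sobolev; you make this split explicit from the outset, which is arguably cleaner.
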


\begin{proof}
From the relation $\frac{d^k}{dt^{k}}(e^{it\Delta}f)=(i\Delta)^{k}(e^{it\Delta}f)$, we apply Taylor's formula to find
\begin{equation}
e^{it\Delta}f=\sum_{j=0}^{k} \frac{(it)^{j}}{j!}\Delta^j f+\frac{i^{k+1}}{k!}\int_0^t (t-t')^{k}\Delta^{k+1}(e^{it'\Delta}f)\, dt',
\end{equation}
for all $t>0$. Then for a given multi-index $\beta$ of order $|\beta|\leq K$,
\begin{equation}
\partial^{\beta}(e^{it\Delta}f)=\sum_{j=0}^{k} \frac{(it)^{j}}{j!}\partial_x^{\beta}\Delta^j f+\frac{i^{k+1}}{k!}\int_0^t (t-t')^{k}\partial^{\beta}\Delta^{k+1}(e^{it'\Delta}f)\, dt',
\end{equation}
so that by Sobolev embedding $H^{\lfloor \frac{N}{2} \rfloor +1}(\mathbb{R}^N)\hookrightarrow L^{\infty}(\mathbb{R}^N)$ and an application of Lemma \ref{derivexp2}, we get 
\begin{equation}\label{eq(III)4} 
\begin{aligned}
\|\langle x \rangle^{b}\partial^{\beta}&(e^{it \Delta}f)\|_{L^{\infty}}\\
 \lesssim & \langle t \rangle^{k} \sum_{|\alpha|\leq K+2k}\|\langle x \rangle^{b}\partial^{\alpha}f\|_{L^{\infty}}+ |t|\langle t \rangle^{k} \sum_{2k+2 \leq |\alpha| \leq K +2k+2}\|\langle x \rangle^{b}e^{it\Delta}\partial^{\alpha}f\|_{L^{\infty}}\\
\lesssim & \langle t \rangle^{k} \sum_{|\alpha|\leq K+ 2k}\|\langle x \rangle^{b}\partial^{\alpha}f\|_{L^{\infty}}+ \langle t \rangle^{k+1} \sum_{2k+2< |\alpha| \leq K +2k+3+\lfloor\frac{N}{2} \rfloor}\|\langle x \rangle^{b}e^{it\Delta}\partial^{\alpha}f\|_{L^{2}} \\
\lesssim & \langle t \rangle^{k} \sum_{|\alpha|\leq k}\|\langle x \rangle^{b}\partial^{\alpha}f\|_{L^{\infty}}+ \langle t \rangle^{k+1+b}\Big( \|J^{s}f\|_{L^2}+ \sum_{k< |\alpha| \leq K +2k+3+\lfloor\frac{N}{2} \rfloor}\|\langle x \rangle^{b}\partial^{\alpha}f\|_{L^{2}}\Big),
\end{aligned}
\end{equation}
for all $s\geq K +2k+3+\lfloor\frac{N}{2} \rfloor+b$.
\end{proof}

%%%%%%%%%%%%%%%%%%%%%%%%%%%%%%%%%%%%%%%%%%%%%%%%%%%%%%%%%%%%%%%%%%%%%%%%%%%%%%%%%%%%%%%%%%%%%%%%%%%%%%%%%%%%%%%%%%%%%%%%%%%%%%%%%%%%%%%%%%%%%%%%%%%%%%%%%%%%%%%%%%%%%%%%%%%%%%%%%%%%%%%%%%%%%%%%%%%%%%%%%%%%%%%%%%%%%%%%%%%%%%%%%%%%%%%%%%%%%%%%%%%%%%%%%%%%%%%%%%%%%%%%%%%%%%%%%%%%%%%%%%%%%%

\section{Proof of Theorem \ref{mainTHM}} \label{S:main}

Let $\frac{4}{3}<p<2$ and $0<\gamma<\frac{N(3p-4)}{2p}$. We consider $m\in \mathbb{R}^{+}$, $M,M_0\in \mathbb{Z}^{+}$ satisfying the conditions \eqref{condmainT1}-\eqref{condmainT3}. We further set
\begin{equation}\label{restric0.2}
N(u)=\big(|x|^{-(N-\gamma)}\ast |u|^{p}\big)|u|^{p-2}u.
\end{equation}
Recalling the space $\mathfrak{X}$ determined by \eqref{spacedef} and \eqref{normspace}, we will establish well-posedness conclusions by applying the contraction principle to the integral operator associated to \eqref{gHartree}, namely,
\begin{equation*}
\Phi(u(t))=e^{it \Delta }u_0+i\mu\int_0^t e^{i(t-t')\Delta} N(u(t'))\, dt',
\end{equation*}
acting on the space
\begin{equation}\label{functspa}
\begin{aligned}
\mathfrak{X}_T(R,\lambda)=\Big\{ u\in &C([0,T];\mathfrak{X}): \\
&\sup_{t\in [0,T]} \Big( \sum_{|\alpha|\leq \lfloor \frac{N}{2} \rfloor}\|\langle x \rangle^m \partial^{\alpha}u(t)\|_{L^{\infty}_x}+\sum_{\lfloor \frac{N}{2} \rfloor<|\alpha|\leq M}\|\langle x \rangle^m \partial^{\alpha} u(t)\|_{L^{2}_x}\\
&\hspace{5cm}+\sum_{M<|\alpha|\leq M+M_0-N}\|\partial^{\alpha} u(t)\|_{L^2_x}\Big)=\|u\|_{L^{\infty}_T\mathfrak{X}} \leq R, \\
& \inf_{(x,t)\in \mathbb{R}^N\times [0,T]} |\langle x\rangle^m u(x,t)|\geq \frac{\lambda}{2}\Big\}.
\end{aligned}
\end{equation}
Notice that in the one-dimensional case the space \eqref{functspa} reduces to
\begin{equation}\label{functspa1D}
\begin{aligned}
\mathfrak{X}_T^{1D}(R,\lambda)=\Big\{ u\in &C([0,T];\mathfrak{X}): \\
&\sup_{t\in [0,T]} \Big(\|\langle x \rangle^m u(t)\|_{L^{\infty}_x}+\sum_{j=1}^{M}\|\langle x \rangle^m \partial_x^j u(t)\|_{L^{2}_x}+\sum_{j=M+1}^{M+M_0-1}\|\partial_x^{j} u(t)\|_{L^2_x}\Big)=\|u\|_{L^{\infty}_T\mathfrak{X}} \leq R, \\
& \inf_{(x,t)\in \mathbb{R}\times [0,T]} |\langle x\rangle^m u(x,t)|\geq \frac{\lambda}{2}\Big\}.
\end{aligned}
\end{equation}
For example, setting $\frac{7}{4}<p<2$, $0<\gamma<\frac{2p-3}{2}$ and $m=\Big(\frac{1}{2}\Big)^{+}$ (i.e., $m=\frac{1}{2}+\epsilon$ with $\epsilon \ll 1$), we can take $M=6$ and $M_0=4$ in the space \eqref{functspa1D}. 

In what follows we find an appropriate $R>0$ and $0<T\leq 1$ such that $\Phi$ maps $\mathfrak{X}_T(R,\lambda)$ into itself and defines a contraction with the $\|\cdot\|_{L^{\infty}_T\mathfrak{X}}$-norm. 

We begin by deducing the following proposition, which is convenient to estimate the nonlinear part of the equation in \eqref{gHartree}.
\begin{prop}\label{propmain1} 
Let $b\in \mathbb{R}$, $u \in \mathfrak{X}_T(R,\lambda)$ and $1\leq r \leq \infty$. For any multi-index $\alpha$ of order $|\alpha|\leq M+M_0-N$, it follows that
\begin{equation}\label{eqspaceX1}
\begin{aligned}
\|\langle x \rangle^{b}\partial^{\alpha}(|u|^p)\|_{L^{\infty}_T L^r_x}\lesssim  R^{p}\|\langle x\rangle^{b-mp}\|_{L^r_x}&+ \sum_{k=1}^{|\alpha|} \lambda^{-(2k-p)}R^{2k-1}\Big(R\|\langle x \rangle^{b-mp}\|_{L^r_x}\\
&+\sum_{\lfloor \frac{N}{2} \rfloor<|\beta|\leq |\alpha|}\|\langle x \rangle^{b+m(1-p)}\partial^{\beta}u\|_{L^{\infty}_T L^r_x}\Big).
\end{aligned}
\end{equation}
Moreover,  
\begin{equation}\label{eqspaceX2}  
\begin{aligned}
\|\langle x \rangle^{b}\partial^{\alpha}(|u|^{p-2}u)\|_{L^{\infty}_TL^r_x}\lesssim 
\sum_{k=0}^{|\alpha|} \lambda^{-(2(k+1)-p)}R^{2k}\Big(&R\|\langle x \rangle^{b-m(p-1)}\|_{L^r_x}\\
&+\sum_{\lfloor \frac{N}{2} \rfloor<|\beta|\leq |\alpha|} \|\langle x\rangle^{b+m(2-p)} \partial^{\beta}u\|_{L^{\infty}_{T}L^r_x}\Big).
\end{aligned}
\end{equation} 
Above, the convention for the empty summation (such as $\sum_{1 \leq k \leq 0}$) is defined as zero.
\end{prop}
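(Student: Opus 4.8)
The plan is to reduce both \eqref{eqspaceX1} and \eqref{eqspaceX2} to a pointwise algebraic expansion of the derivatives, followed by H\"older's inequality with a carefully arranged distribution of the resulting factors between $L^{\infty}_x$ and $L^r_x$. The crucial structural input is that $u$ never vanishes: the lower bound $\inf_{(x,t)}|\langle x\rangle^m u(x,t)|\ge \lambda/2$ built into $\mathfrak X_T(R,\lambda)$ forces $|u(x,t)|\ge \tfrac{\lambda}{2}\langle x\rangle^{-m}>0$, so that $w:=|u|^2=u\bar u$ stays away from zero and the maps $w\mapsto w^{p/2}$ and $w\mapsto w^{(p-2)/2}u$ are smooth to the required order on the range of $u$. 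Consequently, by the generalized Leibniz and chain rules (Fa\`a di Bruno), for $|\alpha|\ge 1$ one may write $\partial^{\alpha}(|u|^{p})$ as a finite linear combination of terms of the form
\begin{equation*}
|u|^{p-2k}\prod_{i=1}^{2k}\partial^{\delta_i}u^{\sharp_i},\qquad \sum_{i=1}^{2k}|\delta_i|=|\alpha|,\quad |\delta_i|\ge 0,\quad 1\le k\le |\alpha|,
\end{equation*}
where each $u^{\sharp_i}\in\{u,\bar u\}$ and no singular factor arises since $|u|>0$; similarly $\partial^{\alpha}(|u|^{p-2}u)$ is a finite combination of terms $|u|^{p-2(k+1)}\prod_{i=1}^{2k+1}\partial^{\delta_i}u^{\sharp_i}$ with $\sum|\delta_i|=|\alpha|$ and $0\le k\le|\alpha|$. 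The case $|\alpha|=0$ produces the leading term $R^{p}\|\langle x\rangle^{b-mp}\|_{L^r_x}$ in \eqref{eqspaceX1}, obtained from the upper bound $|u|\le R\langle x\rangle^{-m}$ alone; in \eqref{eqspaceX2} even the $k=0$ factor $|u|^{p-2}$ is singular, so there the lower bound is always in play.

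For the pointwise estimates I would treat the singular power and the regular factors separately. Using the lower bound, since $p-2k<0$ and $p-2(k+1)<0$,
\begin{equation*}
|u|^{p-2k}\lesssim \lambda^{-(2k-p)}\langle x\rangle^{m(2k-p)},\qquad |u|^{p-2(k+1)}\lesssim \lambda^{-(2(k+1)-p)}\langle x\rangle^{m(2(k+1)-p)},
\end{equation*}
which produce exactly the $\lambda$-powers and weight gains recorded in \eqref{eqspaceX1}--\eqref{eqspaceX2}. For a derivative factor $\partial^{\delta_i}u$ of low order $|\delta_i|\le\lfloor N/2\rfloor$, the $\mathfrak X$-norm gives $|\partial^{\delta_i}u|\le \|\langle x\rangle^m\partial^{\delta_i}u\|_{L^\infty_x}\langle x\rangle^{-m}\le R\,\langle x\rangle^{-m}$, so each such factor contributes one power of $R$ and a weight $\langle x\rangle^{-m}$. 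Tallying weights for \eqref{eqspaceX1}: the prefactor $\langle x\rangle^b$, the gain $\langle x\rangle^{m(2k-p)}$, and $(2k-1)$ factors $\langle x\rangle^{-m}$ leave the residual weight $b+m(2k-p)-m(2k-1)=b+m(1-p)$ on the one remaining factor, exactly the weight in \eqref{eqspaceX1}; the analogous count with $2k$ low-order factors and the extra $u$ leaves $b+m(2-p)$ in \eqref{eqspaceX2}.

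To organize the $L^r_x$ estimate I would, in each product, single out the factor of \emph{largest} derivative order, measure it in $L^r_x$, and place all remaining factors in $L^\infty_x$ by H\"older. The factors of order $\le\lfloor N/2\rfloor$ are in $L^\infty_x$ by the first sum of the $\mathfrak X$-norm; for a factor $\partial^{\delta}u$ of intermediate order I would instead use $H^{\lfloor N/2\rfloor+1}(\mathbb R^N)\hookrightarrow L^\infty(\mathbb R^N)$ together with $|\partial^{\sigma}\langle x\rangle^m|\lesssim\langle x\rangle^m$ to bound $\|\langle x\rangle^m\partial^{\delta}u\|_{L^\infty_x}\lesssim\sum_{|\beta|\le|\delta|+\lfloor N/2\rfloor+1}\|\langle x\rangle^m\partial^{\beta}u\|_{L^2_x}\lesssim R$, which is legitimate only if $|\delta|+\lfloor N/2\rfloor+1\le M$. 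This is where the main obstacle lies: one must ensure that, after removing the single highest-order factor, \emph{every} remaining factor can be thrown into $L^\infty_x$. Ordering $|\delta_1|\ge|\delta_2|\ge\cdots$, the non-maximal orders obey $2|\delta_2|\le|\delta_1|+|\delta_2|\le|\alpha|\le M+M_0-N$, hence $|\delta_2|\le\tfrac12(M+M_0-N)$; the requirement $|\delta_2|+\lfloor N/2\rfloor+1\le M$ then reduces precisely to $M> M_0-N+2\lfloor N/2\rfloor+2$, which is exactly hypothesis \eqref{condmainT3}. Thus the conditions \eqref{condmainT2}--\eqref{condmainT3} on $M,M_0$ are what make the ``one factor in $L^r_x$, all others in $L^\infty_x$'' splitting valid.

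Finally I would sum the finitely many terms grouped by $k$. Two types of contributions survive: those in which the highest-order factor has order $\le\lfloor N/2\rfloor$ (so every factor sits in $L^\infty_x$ and the $L^r_x$ norm falls on the pure weight, giving $R^{p}\|\langle x\rangle^{b-mp}\|_{L^r_x}$, resp.\ the $R\|\langle x\rangle^{b-m(p-1)}\|_{L^r_x}$ term inside the $k$-sum), and those in which the highest-order factor has order in $(\lfloor N/2\rfloor,|\alpha|]$ (kept as the weighted norm $\|\langle x\rangle^{b+m(1-p)}\partial^{\beta}u\|_{L^\infty_TL^r_x}$, resp.\ $\|\langle x\rangle^{b+m(2-p)}\partial^{\beta}u\|_{L^\infty_TL^r_x}$, to be absorbed later in the contraction argument). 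Collecting the prefactors $\lambda^{-(2k-p)}R^{2k-1}$ and $\lambda^{-(2(k+1)-p)}R^{2k}$ from the count above and passing to the supremum over $t\in[0,T]$ yields exactly \eqref{eqspaceX1} and \eqref{eqspaceX2}, with the empty-sum convention accounting for the endpoint $|\alpha|=0$.
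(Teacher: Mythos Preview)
Your argument is correct and follows essentially the same route as the paper. The paper organizes the expansion by first writing $\partial^{\alpha}(|u|^p)=\sum_{k=1}^{|\alpha|}|u|^{p-2k}\sum_{\beta_1+\cdots+\beta_k=\alpha,\ |\beta_j|\ge 1}c_{\vec\beta}\,\partial^{\beta_1}(|u|^2)\cdots\partial^{\beta_k}(|u|^2)$ and then splitting the inner sum into the set $A_k^1(\alpha)$ where every $|\beta_j|\le M-\lfloor N/2\rfloor-1$ (all factors to $L^\infty$) and its complement $A_k^2(\alpha)$, where \eqref{condmainT3} forces a \emph{unique} large $\beta_{k'}$; after expanding $\partial^{\beta_{k'}}(|u|^2)$ the single highest-order $u$-factor is placed in $L^r_x$ and the rest in $L^\infty_x$ via $H^{\lfloor N/2\rfloor+1}\hookrightarrow L^\infty$. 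Your version simply expands all the way down to individual $u,\bar u$ factors from the outset and invokes the same ``second-largest order $\le |\alpha|/2\le (M+M_0-N)/2\le M-\lfloor N/2\rfloor-1$'' combinatorics, which is precisely the content of \eqref{condmainT3}; the weight bookkeeping and the treatment of $|u|^{p-2}u$ are identical.
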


\begin{proof}
We first deal with \eqref{eqspaceX1}. We prove the case $|\alpha|\geq 1$, since the case with $\alpha=0$ follows directly from the definition of the space $\mathfrak{X}_T(R,\lambda)$. In virtue of the identity 
\begin{equation}\label{identi1}
\begin{aligned}
\partial^{\alpha}(|u|^p)=&\sum_{k=1}^{|\alpha|}|u|^{p-2k}\Big(\sum_{\substack{\beta_1+\dots+\beta_k =\alpha \\ |\beta_j|\geq 1}} c_{\beta_1,\dots,\beta_k} \partial^{\beta_1}(|u|^2)\cdots \partial^{\beta_k}(|u|^2) \Big),
\end{aligned}
\end{equation}
and the fact $|u(x,t)|^{-1}\lesssim \lambda^{-1}\langle x \rangle^m$, it is seen that
\begin{equation}\label{eq1propmain1}
\begin{aligned}
\|\langle x \rangle^{b}\partial^{\alpha}(|u|^p)\|_{L^{\infty}_T L^r_x}\lesssim & \sum_{k=1}^{|\alpha|}\lambda^{-(2k-p)}\sum_{\substack{\beta_1+\dots+\beta_k =\alpha \\ |\beta_j|\geq 1}}\|\langle x \rangle^{b+m(2k-p)}\partial^{\beta_1}(|u|^2)\cdots \partial^{\beta_k}(|u|^2) \|_{L^{\infty}_T L^r_x} \\
=& \sum_{k=1}^{|\alpha|}\lambda^{-(2k-p)}\Big(\sum_{A_k^1(\alpha)}(\cdots)+\sum_{A_k^2(\alpha)}(\cdots)\Big),
\end{aligned}
\end{equation}
where we have defined
\begin{equation}\label{eq2propmain1}
\begin{aligned}
A_k^1(\alpha):=&\big\{(\beta_1,\dots, \beta_k)\, : \, \beta_1+\dots+\beta_k=\alpha, \, \, 1\leq |\beta_j|\leq M-\lfloor\frac{N}{2} \rfloor-1, \, \, j=1,\dots,k\big\}, \\
A_k^2(\alpha):=&\big\{(\beta_1,\dots, \beta_k)\, : \, \beta_1+\dots+\beta_k=\alpha, \,\,  1\leq |\beta_j|, \, \, j=1,\dots,k\big\}\setminus A_{k}^1(\alpha).
\end{aligned}
\end{equation}
To estimate the sum over $A_k^1(\alpha)$ for a fixed $k=1,\dots, |\alpha|$, we use the identity
\begin{equation*}
\partial^{\beta_j}(|u|^2)=\sum_{\beta_{j,1}+\beta_{j,2}=\beta_j} c_{\beta_{j,1},\beta_{j,2}} (\partial^{\beta_{j,1}}u)(\partial^{\beta_{j,1}}\overline{u})
\end{equation*}
and the fact that all the derivatives in the sum over $A_k^1(\alpha)$ are of the order at most $M-\lfloor \frac{N}{2} \rfloor-1$ to get
\begin{equation}\label{eq3propmain1} 
\begin{aligned}
\sum_{(\beta_1,\dots,\beta_k)\in A_k^1(\alpha)}&\|\langle x \rangle^{b+m(2k-p)}\partial^{\beta_1}(|u|^2)\cdots \partial^{\beta_k}(|u|^2) \|_{L^{\infty}_T L^r_x} \\
\lesssim & \sum_{(\beta_1,\dots,\beta_k)\in A_k^1(\alpha)}\sum_{\substack{\beta_{j,1}+\beta_{j,2}=\beta_j \\ 1\leq j \leq k}}\|\langle x \rangle^{b+m(2k-p)}\partial^{\beta_{1,1}}u\partial^{\beta_{1,2}}\overline{u}\cdots \partial^{\beta_{k,1}}u \partial^{\beta_{k,2}}\overline{u} \|_{L^{\infty}_T L^r_x}\\
\lesssim & \Big( \sup_{|\beta|\leq M-\lfloor \frac{N}{2} \rfloor-1}\|\langle x\rangle^m \partial^{\beta}u\|_{L^{\infty}_{T,x}}\Big)^{2k}\|\langle x \rangle^{b-mp}\|_{L^{\infty}_T L^r_x}.
\end{aligned}
\end{equation}
Now, by Sobolev embedding $H^{\lfloor \frac{N}{2}\rfloor +1}(\mathbb{R}^N)\hookrightarrow L^{\infty}(\mathbb{R}^N)$, we obtain
\begin{equation}\label{eq3.1propmain1}
\begin{aligned}
\sup_{|\beta|\leq M-\lfloor \frac{N}{2} \rfloor-1}\|\langle x\rangle^m  \partial^{\beta}u\|_{L^{\infty}_{T,x}}\lesssim & \sup_{|\beta|\leq \lfloor \frac{N}{2} \rfloor}\|\langle x\rangle^m \partial^{\beta}u\|_{L^{\infty}_{T,x}}+\sum_{\lfloor \frac{N}{2} \rfloor <|\beta|\leq M}\|\langle x\rangle^m \partial^{\beta}u\|_{L^{\infty}_{T}L^2_x}\\
\lesssim & R.
\end{aligned}
\end{equation}
Plugging \eqref{eq3.1propmain1} into \eqref{eq3propmain1} completes the estimate for the sum over $A_k^1(\alpha)$.

On the other hand, since $|\alpha|\leq M+M_0-N$, the restrictions on $M$ and $M_0$ in \eqref{condmainT3} imply that for each $(\beta_1,\dots, \beta_k)\in A_k^2(\alpha)$ there exists a unique $k'\in \{1,\dots,k\}$ such that $|\beta_{k'}| >M-\lfloor\frac{N}{2} \rfloor-1$, in other words, $|\beta_{j}|\leq M-\lfloor\frac{N}{2} \rfloor-1$ for all $j\neq k'$. Therefore, by taking the $L^{\infty}_TL^r_x$-norm of the term with the derivative of higher order, and computing the $L^{\infty}_T L^{\infty}_x$-norm with the weight $\langle x \rangle^m$ of the remaining factors, we infer that the sum over $A_k^2(\alpha)$ is estimated as follows
\begin{equation}\label{eq4propmain1}
\begin{aligned}
\sum_{(\beta_1,\dots,\beta_k)\in A_k^2(\alpha)}&\|\langle x \rangle^{b+m(2k-p)}\partial^{\beta_1}(|u|^2)\cdots \partial^{\beta_k}(|u|^2) \|_{L^{\infty}_T L^r_x} \\
\lesssim & \Big( \sup_{|\beta|\leq M-\lfloor \frac{N}{2} \rfloor-1}\|\langle x\rangle^m \partial^{\beta}u\|_{L^{\infty}_{T,x}}\Big)^{2k-1}\Big(\sum_{M-\lfloor\frac{N}{2} \rfloor-1<|\beta|\leq |\alpha|}\|\langle x \rangle^{b+m(1-p)}\partial^{\beta}u\|_{L^{\infty}_T L^r_x}\Big)\\
\lesssim & R^{2k-1}\Big(\sum_{\lfloor \frac{N}{2} \rfloor<|\beta|\leq |\alpha|}\|\langle x \rangle^{b+m(1-p)}\partial^{\beta}u\|_{L^{\infty}_T L^r_x}\Big).
\end{aligned}
\end{equation}
Inserting \eqref{eq3propmain1}-\eqref{eq4propmain1} into \eqref{eq1propmain1}, we complete the deduction of \eqref{eqspaceX1}.

Now, we turn to \eqref{eqspaceX2}. This estimate follows by a similar reasoning leading to \eqref{eqspaceX1}. Indeed, for a given multi-index $\alpha$, we recall the identity
\begin{equation}\label{identi2}
\begin{aligned}
\partial^{\alpha}\big(|u|^{p-2}u\big)=&\sum_{k=1}^{|\alpha|} |u|^{p-2(k+1)}\sum_{\substack{\beta_0+\beta_1+\dots+\beta_k=\alpha, \\ |\beta_j|\geq 1, \, \, 1\leq j \leq k}}c_{\beta_0,\dots,\beta_k}\partial^{\beta_0}u\partial^{\beta_1}(|u|^2)\dots \partial^{\beta_k}(|u|^2)\\
&+ |u|^{p-2}\partial^{\alpha}u.
\end{aligned}
\end{equation}
Consequently, since $|u(x,t)|^{-1}\lesssim \lambda^{-1}\langle x \rangle^m$, we find
\begin{equation}\label{eq6propmain1}
\begin{aligned}
\|\langle x \rangle^{b}\partial^{\alpha}&\big(|u|^{p-2}u\big)\|_{L^{\infty}_T L^r_x}\\
\lesssim & \sum_{k=1}^{|\alpha|} \lambda^{-(2(k+1)-p)}\sum_{\substack{\beta_0+\beta_1+\dots+\beta_k=\alpha \\ |\beta_j|\geq 1,\, \, 1\leq j \leq k}}\|\langle x \rangle^{b+m(2(k+1)-p)}\partial^{\beta_0}u\partial^{\beta_1}(|u|^2)\dots \partial^{\beta_k}(|u|^2)\|_{L^{\infty}_T L^r_x} \\
&+\lambda^{-(2-p)}\|\langle x \rangle^{b+m(2-p)}\partial^{\alpha}u\|_{L^{\infty}_T L^r_x}\\ 
=& \sum_{k=1}^{|\alpha|}\Big(\sum_{B_{k}^1(\alpha)} (\cdots)+\sum_{B_{k}^2(\alpha)} (\cdots)\Big)+\lambda^{-(2-p)}\|\langle x \rangle^{b+m(2-p)}\partial^{\alpha}u\|_{L^{\infty}_T L^r_x},
\end{aligned}
\end{equation}
where we have set
\begin{equation}\label{eq7propmain1}
\begin{aligned}
B_k^1(\alpha):=&\big\{(\beta_0,\dots, \beta_k)\, : \, \beta_0+\dots+\beta_k=\alpha, \,\, |\beta_0|\leq M-1- \lfloor\frac{N}{2} \rfloor, \\
& \hspace{5.5cm} 1\leq |\beta_j|\leq M-1-\lfloor\frac{N}{2} \rfloor, \, \, j=1,\dots,k\big\}, \\
B_k^2(\alpha):=&\big\{(\beta_0,\dots, \beta_k)\, : \, \beta_0+\dots+\beta_k=\alpha, \,\,  1\leq |\beta_j|, \, \, j=1,\dots,k\big\}\setminus B_{k}^1(\alpha),
\end{aligned}
\end{equation}
for every $k=1,\dots,|\alpha|$. In virtue of \eqref{eq6propmain1} and \eqref{eq7propmain1}, we can   apply similar considerations as those used in \eqref{eq3propmain1} and \eqref{eq4propmain1} to obtain \eqref{eqspaceX2}. To avoid repetitions, we omit these computations. 
\end{proof}
We divide our considerations according to each component defining the norm of the space $\mathfrak{X}_T(R,\lambda)$.

%%%%%%%%%%%%%%%%%%%%%%%%%%%%%%%%%%%%%%%%%%%%%%%%%%%%%%%%%%%%%%%%%%%%%%%%%%%%%%%%%%%%%%%%%%%%%%%%%%%%%%%%%%%%%%%%%%%%%%%%%%%%%%%%%%%%%%%%%%%%%%%%%%%%%%%%%%%%%%%%%%%%%%%%%%%%%%%%%%%%%%%%%%%%%%%%%%%

\subsubsection*{ \bf Estimate in the space $L^{\infty}([0,T];W^{\lfloor\frac{N}{2} \rfloor,\infty}(\mathbb{R}^N;\langle x \rangle^m \, dx))$.}

Let $\beta$ be a multi-index of order $|\beta|\leq \lfloor \frac{N}{2} \rfloor$. By applying Proposition \ref{propinfnorm} and the restrictions \eqref{condmainT3}, we find
\begin{equation}\label{eqmainTH0}
\begin{aligned}
\sup_{t\in[0,T]}&\|\langle x \rangle^m \partial^{\beta}\Phi(u)\|_{L^{\infty}_x} \\
\lesssim & \langle T \rangle^{\lfloor \frac{N}{2} \rfloor+1+m}\Big( \sum_{|\alpha|\leq \lfloor \frac{N}{2} \rfloor}\|\langle x \rangle^m \partial^{\alpha}u_0\|_{L^{\infty}_{x}}+\sum_{\lfloor \frac{N}{2} \rfloor<|\alpha|\leq M}\|\langle x \rangle^m\partial^{\alpha}u_0\|_{L^2_x} \\
& \hspace{9cm}+\|J^{M+M_0-N}u_0\|_{L^2_x}\Big)\\ 
&+ |T|\langle T \rangle^{\lfloor \frac{N}{2} \rfloor+1+m} \Big(\sum_{|\alpha|\leq \lfloor \frac{N}{2} \rfloor}\|\langle x \rangle^m \partial^{\alpha}N(u)\|_{L^{\infty}_{T,x}}\\
&\hspace{3cm}+\sum_{\lfloor \frac{N}{2} \rfloor<|\alpha|\leq M}\|\langle x \rangle^m\partial^{\alpha}N(u)\|_{L^{\infty}_TL^2_x}+\|J^{M+M_0-N}N(u)\|_{L^{\infty}_TL^2_x}\Big).
\end{aligned}
\end{equation}
We further divide our analysis according to the previous norms involving the nonlinear term $N(u)$.
\\ \\
{\bf Estimate for $\|\langle x \rangle^m \partial^{\alpha} N(u)\|_{L^{\infty}_{T,x}}$, $|\alpha|\leq \lfloor \frac{N}{2} \rfloor$}. We set 
\begin{equation}\label{restric2}
\frac{1}{q}=\frac{1}{2}-\frac{\gamma}{N},
\end{equation}
which is well-defined provided that $0<\gamma<\frac{N}{2}$. Additionally, the condition \eqref{condmainT1} yields 
\begin{equation}\label{restric1}
\frac{N}{4(p-1)}<m< \frac{N}{2(2-p)}.
\end{equation}
Now, by Leibniz's rule we write
\begin{equation}\label{eqmainTH1}
\begin{aligned}
\|\langle x \rangle^m\partial^{\alpha}N(u)\|_{L^{\infty}_{T,x}}\lesssim & \sum_{\alpha_1+\alpha_2=\alpha}\|\langle x \rangle^{m(2-p)}\big( |x|^{-(N-\gamma)}\ast \partial^{\alpha_1}|u|^{p} \big)\|_{L^{\infty}_{T,x}}\|\langle x \rangle^{m(p-1)}\partial^{\alpha_2}(|u|^{p-2}u)\|_{L^{\infty}_{T,x}}.
\end{aligned}
\end{equation}
Bearing in mind that $|\alpha_1|\leq \lfloor \frac{N}{2} \rfloor$, by applying Sobolev inequality $W^{\lfloor \frac{N}{2} \rfloor +1,q}(\mathbb{R}^N)\hookrightarrow L^{\infty}(\mathbb{R}^N)$ (with $q$ given by \eqref{restric2}) and Proposition \ref{propweightriesz} with the right-hand side of \eqref{restric1}, we get 
\begin{equation}\label{eqmainTH1.0}
\begin{aligned}
\|\langle x \rangle^{m(2-p)}\big( |x|^{-(N-\gamma)}\ast \partial^{\alpha_1}|u|^{p} \big)\|_{L^{\infty}_{T,x}} \lesssim & \sum_{ |\beta|\leq 2\lfloor \frac{N}{2} \rfloor+1} \|\langle x \rangle^{m(2-p)}\big( |x|^{-(N-\gamma)}\ast \partial^{\beta}(|u|^{p})\big)\|_{L^{\infty}_{T}L^{q}_x}\\
\lesssim & \sum_{|\beta|\leq 2\lfloor \frac{N}{2} \rfloor+1} \|\langle x \rangle^{m(2-p)}\partial^{\beta}(|u|^{p})\|_{L^{\infty}_{T}L^{2}_x}.
\end{aligned}
\end{equation}
To complete the estimate of the above inequality, we apply \eqref{eqspaceX1} with $b=m(2-p)$ in Proposition \ref{propmain1} to get
\begin{equation}\label{eqmainTH1.1}
\begin{aligned} 
\sum_{ |\beta|\leq 2\lfloor \frac{N}{2} \rfloor+1}\|\langle x \rangle^{m(2-p)}&\partial^{\beta}(|u|^p)\|_{L^{\infty}_{T}L^2_x}\\
\lesssim &  R^{p} \, \|\langle x\rangle^{-2m(p-1)}\|_{L^2_x}+ \sum_{k=1}^{2\lfloor \frac{N}{2} \rfloor+1	} \lambda^{-(2k-p)}R^{2k-1}\Big(R\|\langle x \rangle^{-2m(p-1)}\|_{L^2_x}\\
&+\sum_{\lfloor \frac{N}{2} \rfloor<|\beta|\leq 2\lfloor \frac{N}{2} \rfloor+1}\|\langle x \rangle^{m(3-2p)}\partial^{\beta}u\|_{L^{\infty}_T L^2_x}\Big)\\
\lesssim &  R^{p}+ \sum_{k=1}^{2\lfloor \frac{N}{2} \rfloor+1} \lambda^{-(2k-p)}R^{2k},
\end{aligned}
\end{equation}
where we have used that $3-2p<1$ implies $\langle x \rangle^{m(3-2p)} \leq \langle x\rangle^{m}$. Likewise, since $|\alpha_2|\leq \lfloor \frac{N}{2}\rfloor$,  \eqref{eqspaceX2} with $b=m(p-1)$ yields
\begin{equation}\label{eqmainTH1.2}
\begin{aligned}
\|\langle x \rangle^{m(p-1)}\partial^{\alpha_2}(|u|^{p-2}u)\|_{L^{\infty}_{T,x}}\lesssim \sum _{k=0}^{\lfloor \frac{N}{2} \rfloor}\lambda^{-(2(k+1)-p)}R^{2k+1}.
\end{aligned}
\end{equation}
Collecting the previous estimates, we arrive at
\begin{equation}\label{eqmainTH2}
\|\langle x \rangle^m \partial^{\alpha}N(u)\|_{L^{\infty}_{T,x}} \lesssim  \Big(R^{p}+ \sum_{k=1}^{2\lfloor \frac{N}{2} \rfloor+1} \lambda^{-(2k-p)}R^{2k}\Big)\Big(\sum _{k=0}^{\lfloor \frac{N}{2} \rfloor}\lambda^{-(2(k+1)-p)}R^{2k+1}\Big),
\end{equation}
for each multi-index $\alpha$ with order $|\alpha| \leq \lfloor \frac{N}{2} \rfloor$.
\\ \\
{\bf Estimate for $\|\langle x \rangle^m \partial^{\alpha}N(u)\|_{L^{\infty}_{T}L^2_x}$, $\lfloor \frac{N}{2} \rfloor<|\alpha|\leq M$}. By the Leibniz's rule
\begin{equation}\label{eqmainTH3}
\begin{aligned}
\|\langle x \rangle^m\partial^{\alpha}N(u)\|_{L^{\infty}_TL^{2}_x}\lesssim \sum_{\alpha_1+\alpha_2=\alpha}\|\langle x \rangle^m \partial^{\alpha_1}\big(|x|^{-(N-\gamma)}\ast |u|^{p}\big)\partial^{\alpha_2}\big(|u|^{p-2}u\big)\|_{L^{\infty}_T L^2_x}.
\end{aligned}
\end{equation}
Let us divide our analysis according to the magnitude of the multi-index $\alpha_1$ in \eqref{eqmainTH3}.

\underline{Case $|\alpha_1| <N$}. We write
\begin{equation*}
\begin{aligned}
\|\langle x \rangle^m \big(|x|^{-(N-\gamma)}\ast \partial^{\alpha_1}|u|^{p}\big)&\partial^{\alpha_2}\big(|u|^{p-2}u\big)\|_{L^{\infty}_T L^2_x}\\
 \lesssim  &\|\langle x \rangle^{m\kappa} \big(|x|^{-(N-\gamma)}\ast \partial^{\alpha_1}(|u|^{p})\big)\|_{L^{\infty}_{T}L^{r_1}_x}\|\langle x \rangle^{m(1-\kappa)}\partial^{\alpha_2}\big(|u|^{p-2}u\big)\|_{L^{\infty}_{T}L^{r_2}_x},
\end{aligned}
\end{equation*}
where
\begin{equation}\label{eqcondi1}
\frac{1}{r_1}+\frac{1}{r_2}=\frac{1}{2}.
\end{equation}
We will choose $\kappa$, $r_1$ and $r_2$ according to the value of the weight $m$. But first, to apply Proposition \ref{propmain1}, we write
\begin{equation}\label{eqcondi2}
\frac{1}{r_1}=\frac{1}{q_2}-\frac{\gamma}{N}.
\end{equation}
Now, if $\Big\{\frac{2\gamma+N}{4(p-1)},\frac{N}{2}\Big\}<m\leq \frac{N}{p}$, we consider $1<q_2<\infty$ fixed, satisfying
\begin{equation}\label{eqcondi2.1}
\frac{\gamma}{N}<\frac{1}{q_2}<\frac{4m(p-1)-N}{2N},
\end{equation}
with this, \eqref{eqcondi1} and \eqref{eqcondi2}, we define $r_1$ and $r_2$, and therefore, we consider $\kappa$ such that 
\begin{equation}\label{eqcondi3}
\begin{aligned}
2-p+\frac{N}{mr_2}<\kappa < p-\frac{N}{mq_2}.
\end{aligned}
\end{equation}
Assuming that $\frac{N}{p}<m<\frac{N-2\gamma}{2(2-p)}$, we take $q_2$ according to
\begin{equation}\label{eqcondi3.1}
0<\frac{\gamma}{N}<\frac{1}{q_2}<\frac{N-2m(2-p)}{2N}.
\end{equation}
This condition determines $r_1$ and $r_2$, and thus, for this case, we consider
\begin{equation}\label{eqcondi4}  
2-p+\frac{N}{mr_2}<\kappa<\frac{N}{m}-\frac{N}{mq_2}.
\end{equation}
By \eqref{eqspaceX2} in Proposition \ref{propmain1}, together with the conditions \eqref{eqcondi3} and \eqref{eqcondi4}, we find
\begin{equation}\label{eqmainTH4}
\begin{aligned}
\|\langle x \rangle^{m(1-\kappa)}\partial^{\alpha_2}\big(|u|^{p-2}u\big)\|_{L^{\infty}_{T}L^{r_2}_x}\lesssim &
\sum_{k=0}^{|\alpha_2|} \lambda^{-(2(k+1)-p)}R^{2k}\Big(R\|\langle x \rangle^{m(2-p-\kappa)}\|_{L^{r_2}_x}\\
&\hspace{3cm}+\sum_{\lfloor \frac{N}{2} \rfloor<|\beta|\leq |\alpha_2|} \|\langle x\rangle^{m(3-p-\kappa)} \partial^{\beta}u\|_{L^{\infty}_{T}L^{r_2}_x}\Big)\\
\lesssim & \sum_{k=0}^{|\alpha_2|} \lambda^{-(2(k+1)-p)}R^{2k+1}.
\end{aligned}
\end{equation}
By applying Proposition \ref{propweightriesz} with our choice of $q_2$ and $\kappa_2$ determined by \eqref{eqcondi2.1} and \eqref{eqcondi3.1}, we get
\begin{equation}\label{eqmainTH4.1}
\begin{aligned}
\|\langle x \rangle^{m\kappa} \big(|x|^{-(N-\gamma)}\ast \partial^{\alpha_1}(|u|^{p})\big)\|_{L^{\infty}_{T}L^{r_1}_x} \lesssim & \|\langle x \rangle^{m\kappa}\partial^{\alpha_1}(|u|^{p})\|_{L^{\infty}_{T}L^{q_2}_x}.
\end{aligned}
\end{equation}
Now, for each multi-index $|\alpha_1|\leq N$, Proposition \ref{propmain1} yields
\begin{equation}\label{eqmainTH5}
\begin{aligned}
\|\langle x \rangle^{m\kappa}\partial^{\alpha_1}(|u|^p)&\|_{L^{\infty}_T L^{q_2}_x}\\
\lesssim &  R^{p}\|\langle x\rangle^{-m(p-\kappa)}\|_{L^{q_2}_x}\\
&+ \sum_{k=1}^{|\alpha_1|} \lambda^{-(2k-p)}R^{2k-1}\|\langle x \rangle^{-m(p-\kappa)}\|_{L^{q_2}_x}\Big(R+\sum_{\lfloor \frac{N}{2} \rfloor<|\beta|\leq |\alpha_1|}\|\langle x \rangle^{m}\partial^{\beta}u\|_{L^{\infty}_{T,x}}\Big) \\
\lesssim &  R^{p}+ \sum_{k=1}^{|\alpha_1|} \lambda^{-(2k-p)}R^{2k}.
\end{aligned}
\end{equation}
Gathering together \eqref{eqmainTH4} and \eqref{eqmainTH5}, we arrive at 
\begin{equation}\label{eqmainTH6}
\|\langle x \rangle^m \big(|x|^{-(N-\gamma)}\ast \partial^{\alpha_1}|u|^{p}\big)\partial^{\alpha_2}\big(|u|^{p-2}u\big)\|_{L^{\infty}_{T}L^2_x}\lesssim \Big(R^p+\sum_{k=1}^{M}\lambda^{-(2k-p)}R^{2k}\Big)\Big(\sum_{k=0}^{M} \lambda^{-(2(k+1)-p)}R^{2k+1}\Big),
\end{equation}
whenever $|\alpha_1|< N$.
\\ \\
\underline{Case $N \leq |\alpha_1|\leq M$}. We write
\begin{equation*}
\begin{aligned}
\|\langle x \rangle^m \partial^{\alpha_1}\big(|x|^{-(N-\gamma)}\ast |u|^{p}\big)&\partial^{\alpha_2}\big(|u|^{p-2}u\big)\|_{L^{\infty}_T L^2_x}\\
&\lesssim \|\langle x \rangle^{m(2-p)} \partial^{\alpha_1}\big(|x|^{-(N-\gamma)}\ast |u|^{p}\big)\|_{L^{\infty}_{T}L^2_x}\|\langle x \rangle^{m(p-1)}\partial^{\alpha_2}\big(|u|^{p-2}u\big)\|_{L^{\infty}_{T,x}}.
\end{aligned}
\end{equation*}
Since $\lfloor \frac{N}{2} \rfloor+1 \leq |\alpha_1|$, we have $|\alpha_2|\leq M-\lfloor\frac{N}{2} \rfloor-1$, then by \eqref{eqspaceX2} and Sobolev embedding,  
\begin{equation}\label{eqmainTH7}
\begin{aligned}
\|\langle x \rangle^{m(p-1)}\partial^{\alpha_2}\big(|u|^{p-2}u\big)\|_{L^{\infty}_{T,x}}\lesssim & 
\sum_{k=0}^{|\alpha|} \lambda^{-(2(k+1)-p)}R^{2k}\Big(R+\sum_{\lfloor \frac{N}{2} \rfloor<|\beta|\leq |\alpha_2|} \|\langle x\rangle^{m} \partial^{\beta}u\|_{L^{\infty}_{T,x}}\Big) \\
\lesssim & \sum_{k=0}^{|\alpha|} \lambda^{-(2(k+1)-p)}R^{2k}\Big(R+\sum_{\lfloor \frac{N}{2} \rfloor<|\beta|\leq |\alpha_2|+\lfloor \frac{N}{2}\rfloor+1} \|\langle x\rangle^{m} \partial^{\beta}u\|_{L^{\infty}_{T}L^2_x}\Big)\\
\lesssim & \sum_{k=0}^{M} \lambda^{-(2(k+1)-p)}R^{2k+1}.
\end{aligned}
\end{equation}
Now, given that $|\alpha_1|\geq N$, there exists a multi-index $\alpha_{1,1}=(\alpha^1_{1,1},\dots,\alpha^N_{1,1})$ such that $\alpha_{1,1} \leq \alpha_1$ and $|\alpha_{1,1}|=N$. Consequently, for some constant $c_{\gamma}\neq 0$, we write
\begin{equation}\label{eqmainTH8}
\begin{aligned}
|x|^{-(N-\gamma)}\ast \partial^{\alpha_1}\big(|u|^{p}\big)=c_{\gamma}\partial^{\alpha_1}D^{-\gamma}\big(|u|^p\big)=&c_{\gamma}\partial^{\alpha_{1,1}}D^{-\gamma}\partial^{\alpha_1-\alpha_{1,1}}\big(|u|^{p}\big)\\
=&(-1)^{|\alpha_{1,1}|}c_{\gamma}\mathcal{R}_1^{\alpha_{1,1}^1}\cdots \mathcal{R}_N^{\alpha_{1,1}^N}D^{N-\gamma}\partial^{\alpha_1-\alpha_{1,1}}\big(|u|^p\big),
\end{aligned}
\end{equation}
where $\mathcal{R}_j=-\partial_{x_j}D^{-1}$ denotes the Riesz transform in the $j$-variable for any dimension $N\geq 2$, and in the one-dimensional setting, we will use the same notation to refer to the Hilbert transform operator. In view of the fact that $0<m<\frac{N}{2(2-p)}$, we have $\langle x \rangle^{2m(2-p)}$ is in the $A_2$ class, and thus, the map 
\begin{equation*}
\mathcal{R}_j: L^{2}(\mathbb{R}^N;\langle x \rangle^{2m(2-p)} \, dx)\rightarrow L^2(\mathbb{R}^N;\langle x \rangle^{2m(2-p)}\, dx)
\end{equation*}
is bounded for each $j=1,\dots,N$. By applying the previous decomposition \eqref{eqmainTH8}, we get
\begin{equation}\label{eqmainTH8.1}
\begin{aligned}
\|\langle x \rangle^{m(2-p)} \partial^{\alpha_1}\big(|x|^{-(N-\gamma)}\ast |u|^{p}\big)\|_{L^{\infty}_{T}L^2_x}\lesssim & \|\langle x \rangle^{m(2-p)} \partial^{\alpha_1}D^{-\gamma}\big( |u|^{p}\big)\|_{L^{\infty}_{T}L^2_x}\\
\sim &   \|\langle x \rangle^{m(2-p)}\mathcal{R}_1^{\alpha_{1,1}^1}\cdots \mathcal{R}_N^{\alpha_{1,1}^N}D^{N-\gamma}\partial^{\alpha_1-\alpha_{1,1}}\big(|u|^p\big)\|_{L^{\infty}_{T}L^2_x} \\
 \lesssim & \sup_{ |\beta|\leq M-N} \|\langle x \rangle^{m(2-p)}D^{N-\gamma}\partial^{\beta}\big(|u|^p\big)\|_{L^{\infty}_{T}L^2_x}.
\end{aligned}
\end{equation}
Considering that $0<m(2-p)<N/2$ and $0<\gamma<N/2$, it follows that $N-\gamma-m(2-p)>0$. Then setting $b=m(2-p)$ and $s=N-\gamma$ in Lemma \ref{lemmachangingDbyJ}, we deduce 
\begin{equation}\label{eqmainTH9}
\begin{aligned}
\|\langle x \rangle^{m(2-p)}D^{N-\gamma}\partial^{\beta}\big(|u|^p\big)\|_{L^{\infty}_{T}L^2_x} \lesssim & \|\langle x \rangle^{m(2-p)}\partial^{\beta}\big(|u|^p\big)\|_{L^{\infty}_{T}L^2_x}+\|J^{N-\gamma-m(2-p)}\partial^{\beta}\big(|u|^p\big)\|_{L^{\infty}_{T}L^2_x}\\
&+\|\langle x \rangle^{m(2-p)}J^{N-\gamma}\partial^{\beta}\big(|u|^p\big)\|_{L^{\infty}_{T}L^2_x},
\end{aligned}
\end{equation}
for each $|\beta|\leq M-N$. Let us estimate each factor on the right-hand side of the above inequality. Indeed, by arguing as in \eqref{eqmainTH1.1}, applying Proposition \ref{propmain1}, we deduce
\begin{equation}\label{eqmainTH10}
\begin{aligned}
\sup_{ |\beta|\leq M-N}\|\langle x \rangle^{m(2-p)}\partial^{\beta}\big(|u|^p\big)\|_{L^{\infty}_{T}L^2_x} \lesssim R^p +\sum_{k=1}^{M-N}\lambda^{-(2k-p)}R^{2k}.
\end{aligned}
\end{equation}
Likewise, given that $|\beta|\leq M-N$, we have $N-\gamma-m(2-p)+|\beta|\leq M$, and hence,
\begin{equation}
\begin{aligned}
\sup_{|\beta|\leq M-N}\|J^{N-\gamma-m(2-p)}\partial^{\beta}\big(|u|^p\big)\|_{L^{\infty}_{T}L^2_x} \lesssim &\sup_{|\beta|\leq M}\|\langle x \rangle^{m(2-p)}\partial^{\beta}\big(|u|^p\big)\|_{L^{\infty}_{T}L^2_x}\\
\lesssim & R^p +\sum_{k=1}^{M}\lambda^{-(2k-p)}R^{2k}.
\end{aligned}
\end{equation}
We are left with controlling the last term on the right-hand side of \eqref{eqmainTH9}. Recalling the integer number $M_0$ satisfying \eqref{condmainT2}, there exists $0<\epsilon<\frac{4m(p-1)-N}{2m}$ such that
\begin{equation}\label{epsicond}
\begin{aligned}
\frac{(N-\gamma)(2mp-N)}{4m(p-1)-N}<\frac{(N-\gamma)((2-p)+\epsilon)}{\epsilon}\leq M_0.
\end{aligned}
\end{equation}
Consequently, we apply the interpolation inequality in Proposition \ref{propweight2} to get 
\begin{equation}\label{eqmainTH11} 
\begin{aligned}
\|\langle x \rangle^{m(2-p)}J^{N-\gamma}\partial^{\beta}\big(|u|^p\big)\|_{L^{\infty}_{T}L^2_x} \lesssim & \|J^{M_0}\partial^{\beta}\big(|u|^{p}\big)\|_{L^{\infty}_TL^2_x}^{\frac{\epsilon}{2-p+\epsilon}}\|\langle x \rangle^{m(2-p)+m\epsilon}\partial^{\beta}\big(|u|^{p}\big)\|_{L^{\infty}_T L^2_x}^{\frac{2-p}{2-p+\epsilon}}\\
\lesssim & \|J^{M+M_0-N}\big(|u|^{p}\big)\|_{L^{\infty}_TL^2_x}+\|\langle x \rangle^{m(2-p)+m\epsilon}\partial^{\beta}\big(|u|^{p}\big)\|_{L^{\infty}_T L^2_x},
\end{aligned}
\end{equation}
where we have $ |\beta|\leq M-N$. To estimate the second term on the right-hand side of \eqref{eqmainTH11}, we use the embedding $H^{N}(\mathbb{R}^N)\hookrightarrow L^{\infty}(\mathbb{R}^N)$ to get
\begin{equation*}
\begin{aligned}
\sup_{\lfloor \frac{N}{2} \rfloor<|\eta|\leq |\beta|}\|\langle x \rangle^{m}\partial^{\eta}u\|_{L^{\infty}_{T,x}}\lesssim \sum_{\lfloor \frac{N}{2} \rfloor<|\eta|\leq M}\|\langle x \rangle^{m}\partial^{\eta}u\|_{L^{\infty}_T L^2_x},
\end{aligned}
\end{equation*}
so that Proposition \ref{propmain1} and our choice of $\epsilon>0$ yield
\begin{equation*}
\begin{aligned}
\|\langle x \rangle^{m(2-p)+m\epsilon}&\partial^{\beta}\big(|u|^{p}\big)\|_{L^{\infty}_T L^2_x}\\
\lesssim & \|\langle x\rangle^{-2m(p-1)+m\epsilon}\|_{L^2_x}\Big( R^{p}+ \sum_{k=1}^{|\beta|} \lambda^{-(2k-p)}R^{2k-1}\big(R+\sum_{\lfloor \frac{N}{2} \rfloor<|\eta|\leq |\beta|}\|\langle x \rangle^{m}\partial^{\eta}u\|_{L^{\infty}_{T,x}}\big)\Big)\\
\lesssim & \Big( R^{p}+ \sum_{k=1}^{M-N} \lambda^{-(2k-p)}R^{2k}\Big).
\end{aligned}
\end{equation*}
On the other hand, we use Proposition \ref{propmain1} with $a=0$, and the fact that $1-p<0$ to deduce
\begin{equation}\label{eqmainTH11.1}
\begin{aligned}
\|J^{M+M_0-N}\big(|u|^{p}\big)\|_{L^{\infty}_TL^2_x}\lesssim & \sum_{|\beta|\leq M+M_0-N} \|\partial^{\beta}\big(|u|^p\big)\|_{L^{\infty}_T L^2_x} \\
\lesssim & \Big(R^{p}+\sum_{k=1}^{M+M_0-N}\lambda^{-(2k-p)}R^{2k-1}\Big)\|\langle x \rangle^{-mp}\|_{L^2_x}\\
&+\sum_{k=1}^{M+M_0-N}\lambda^{-(2k-p)}R^{2k-1}\sum_{\lfloor \frac{N}{2} \rfloor < |\beta|\leq M+M_0-N}\|\langle x \rangle^{m(1-p)}\partial^{\beta}u\|_{L^{\infty}_T L^2_x}\\
\lesssim & R^{p}+\sum_{k=1}^{M+M_0-N}\lambda^{-(2k-p)}R^{2k}.
\end{aligned}
\end{equation}
Summarizing, for any $N\leq|\alpha_1|\leq M$, we have
\begin{equation}\label{eqmainTH12}  
\|\langle x \rangle^{m(2-p)} \partial^{\alpha_1}\big(|x|^{-(N-\gamma)}\ast |u|^{p}\big)\|_{L^{\infty}_{T}L^2_x}\lesssim R^{p}+\sum_{k=1}^{M+M_0-N}\lambda^{-(2k-p)}R^{2k}.
\end{equation}
This estimate concludes considering %completes the considerations for 
the case $N \leq |\alpha_1|\leq M$.

Collecting the above results for the different values of $\alpha_1$, we deduce
\begin{equation}\label{eqmainTH13} 
\sum_{\lfloor \frac{N}{2}\rfloor <|\alpha|\leq M}\|\langle x \rangle^m\partial^{\alpha}N(u)\|_{L^{\infty}_TL^{2}_x}\lesssim \Big(R^p+\sum_{k=1}^{M+M_0-N}\lambda^{-(2k-p)}R^{2k}\Big)\Big(\sum_{k=0}^{M} \lambda^{-(2(k+1)-p)}R^{2k+1}\Big).
\end{equation}
\\ \\
{\bf Estimate for $\|J^{M+M_0-N} N(u)\|_{L^{\infty}_{T,x}}$}. Once again, we write
\begin{equation}\label{eqmainTH13.1}
\begin{aligned}
\|\partial^{\alpha}N(u)\|_{L^{\infty}_TL^{2}_x}\lesssim \sum_{\alpha_1+\alpha_2=\alpha}\| \partial^{\alpha_1}\big(|x|^{-(N-\gamma)}\ast |u|^{p}\big)\partial^{\alpha_2}\big(|u|^{p-2}u\big)\|_{L^{\infty}_T L^2_x}.
\end{aligned}
\end{equation}
If $|\alpha_1|<N$, we estimate as follows
\begin{equation*}
\begin{aligned}
\| \partial^{\alpha_1}\big(|x|^{-(N-\gamma)}&\ast |u|^{p}\big)\partial^{\alpha_2}\big(|u|^{p-2}u\big)\|_{L^{\infty}_T L^2_x}\\
&\lesssim \|\langle x \rangle^{m(2-p)} \partial^{\alpha_1}\big(|x|^{-(N-\gamma)}\ast |u|^{p}\big)\|_{L^{\infty}_{T,x}}\|\langle x \rangle^{-m(2-p)}\partial^{\alpha_2}\big(|u|^{p-2}u\big)\|_{L^{\infty}_T L^2_x}.
\end{aligned}
\end{equation*}
Following the same line of arguments in \eqref{eqmainTH1.0} and \eqref{eqmainTH1.1}, one can deduce %it is not difficult to see that
\begin{equation*}
\begin{aligned}
\|&\langle x \rangle^{m(2-p)}  \partial^{\alpha_1}\big(|x|^{-(N-\gamma)}\ast |u|^{p}\big)\|_{L^{\infty}_{T,x}}\\
 &\lesssim  R^p+ \sum_{k=1}^{N+\lfloor \frac{N}{2} \rfloor+1} \lambda^{-(2k-p)}\big( R^{2k}+R^{2k-1}\sum_{\lfloor\frac{N}{2}\rfloor<|\beta|\leq  N+\lfloor \frac{N}{2} \rfloor+1}\|\langle x \rangle^{m(3-2p)}\partial^{\beta}u\|_{L^{\infty}_T L^2_x}\big)\\
&\lesssim  R^p+\sum_{k=1}^{N+\lfloor \frac{N}{2} \rfloor+1} \lambda^{-(2k-p)} R^{2k}.
\end{aligned}
\end{equation*}
On the other hand, by Proposition \ref{propmain1}, we find
\begin{equation} \label{eqmainTH13.2}
\begin{aligned}
\|\langle x \rangle^{-m(2-p)}&\partial^{\alpha_2}\big(|u|^{p-2}u\big)\|_{L^{\infty}_T L^2_x}\\
\lesssim &\sum_{k=0}^{M+M_0-N} \lambda^{-(2(k+1)-p)}R^{2k}\Big(R+\sum_{\lfloor \frac{N}{2} \rfloor<|\beta|\leq M+M_0-N} \|\partial^{\beta}u\|_{L^{\infty}_{T}L^2_x}\Big)\\
\lesssim &\sum_{k=0}^{M+M_0-N} \lambda^{-(2(k+1)-p)}R^{2k+1}.
\end{aligned}
\end{equation}
This completes the estimate for  \eqref{eqmainTH13.1} whenever $|\alpha_1|\leq N$. 

Assuming now that $N\leq |\alpha_1| \leq M$, in virtue of \eqref{eqmainTH12} and \eqref{eqmainTH13.2}, we get
\begin{equation*}
\begin{aligned}
\| \partial^{\alpha_1}\big(|x|^{-(N-\gamma)}&\ast |u|^{p}\big)\partial^{\alpha_2}\big(|u|^{p-2}u\big)\|_{L^{\infty}_T L^2_x}\\
\lesssim & \|\langle x \rangle^{m(2-p)} \partial^{\alpha_1}\big(|x|^{-(N-\gamma)}\ast |u|^{p}\big)\|_{L^{\infty}_{T}L^2_x}\|\langle x \rangle^{-m(2-p)}\partial^{\alpha_2}\big(|u|^{p-2}u\big)\|_{L^{\infty}_{T,x}}\\
\lesssim & \Big(R^p+\sum_{k=1}^{M+M_0-N}\lambda^{-(2k-p)}R^{2k}\Big)\Big(\sum_{k=0}^{M+M_0-N} \lambda^{-(2(k+1)-p)}R^{2k+1}\Big).
\end{aligned}
\end{equation*}
Finally, if $M<|\alpha_1| \leq M+M_0-N$, we have $|\alpha_2|\leq M_0-N\leq M-\lfloor\frac{N}{2} \rfloor-1$. Thus, we can apply \eqref{eqmainTH7} to obtain
\begin{equation*}
\begin{aligned}
\| \partial^{\alpha_1}\big(|x|^{-(N-\gamma)}&\ast |u|^{p}\big)\partial^{\alpha_2}\big(|u|^{p-2}u\big)\|_{L^{\infty}_T L^2_x}\\
\lesssim & \|\partial^{\alpha_1}\big(|x|^{-(N-\gamma)}\ast |u|^{p}\big)\|_{L^{\infty}_{T}L^2_x}\|\partial^{\alpha_2}\big(|u|^{p-2}u\big)\|_{L^{\infty}_{T,x}}\\
\lesssim & \|\partial^{\alpha_1}\big(|x|^{-(N-\gamma)}\ast |u|^{p}\big)\|_{L^{\infty}_{T}L^2_x}\Big(\sum_{k=0}^{M} \lambda^{-(2(k+1)-p)}R^{2k+1} \Big).
\end{aligned}
\end{equation*}
To complete the estimate of the above inequality, we write $\alpha_1=(\alpha_{1,1},\dots, \alpha_{1,N})$, so that there exists a constant $c_{\gamma}\neq 0$ such that
\begin{equation*}
\begin{aligned}
\partial^{\alpha_1}\big(|x|^{-(N-\gamma)} \ast |u|^{p}\big)=c_{\gamma} \mathcal{R}_1^{\alpha_{1,1}}\dots \mathcal{R}_N^{\alpha_{1,N}}D^{|\alpha_1|-\gamma}\big(|u|^p \big),
\end{aligned}
\end{equation*}
and thus, by the boundedness of the Hilbert and Riesz transforms and \eqref{eqmainTH11.1}, we obtain 
\begin{equation*}
\begin{aligned}
\|\partial^{\alpha_1}\big(|x|^{-(N-\gamma)} \ast |u|^{p}\big)\|_{L^{\infty}_T L^2_x} \lesssim \|D^{|\alpha_1|-\gamma}\big(|u|^p \big)\|_{L^{\infty}_T L^2_x}&\lesssim \sum_{|\beta|\leq M+M_0-N}\|\partial^{\beta}\big(|u|^p \big)\|_{L^{\infty}_T L^2_x} \\
&\lesssim  R^{p}+\sum_{k=1}^{M+M_0-N}\lambda^{-(2k-p)}R^{2k}.
\end{aligned}
\end{equation*} 
Consequently, the above estimates show that 
\begin{equation}\label{eqmainTH14} 
\begin{aligned}
\|J^{M+M_0-N} N(u)\|_{L^{\infty}_{T}L^2_x} \lesssim \Big(R^p+\sum_{k=1}^{M+M_0-N}\lambda^{-(2k-p)}R^{2k}\Big)\Big( \sum_{k=0}^{M+M_0-N} \lambda^{-(2(k+1)-p)}R^{2k+1}\Big).
\end{aligned}
\end{equation}
Finally, plugging \eqref{eqmainTH2}, \eqref{eqmainTH13} and \eqref{eqmainTH14}  into \eqref{eqmainTH0}, we find 
\begin{equation}\label{eqmainTH15} 
\begin{aligned}
\sup_{t\in[0,T]}&\|\langle x \rangle^m\Phi(u)\|_{L^{\infty}_x} \\
\lesssim & \langle T \rangle^{\lfloor \frac{N}{2} \rfloor+1+m}\Big( \sum_{|\alpha|\leq \lfloor \frac{N}{2} \rfloor}\|\langle x \rangle^m \partial^{\alpha}u_0\|_{L^{\infty}_x}+\sum_{\lfloor \frac{N}{2} \rfloor<|\alpha|\leq M}\|\langle x \rangle^m\partial^{\alpha}u_0\|_{L^2_x} \\
& \hspace{9cm}+\|J^{M+M_0-N}u_0\|_{L^2_x}\Big)\\
&+ |T|\langle T \rangle^{\lfloor \frac{N}{2} \rfloor+1+m} \Big(R^{p}+ \sum_{k=1}^{M+M_0-N} \lambda^{-(2k-p)}R^{2k}\Big)\Big(\sum _{k=0}^{M+M_0-N}\lambda^{-(2(k+1)-p)}R^{2k+1}\Big).
\end{aligned}
\end{equation}

\subsubsection*{\bf Estimate for $\partial^{\alpha}u \in L^{\infty}([0,T];L^2(\mathbb{R}^N;\langle x \rangle^{2m} \, dx))$, $\lfloor \frac{N}{2} \rfloor<|\alpha|\leq M$.} Let $\alpha$ be a multi-index of order $|\alpha|\leq M$. By Lemma \ref{derivexp2}, the restrictions \eqref{condmainT2}, \eqref{condmainT3}, \eqref{eqmainTH13} and \eqref{eqmainTH14}, we deduce
\begin{equation}\label{eqmainTH16} 
\begin{aligned}
\sup_{t\in[0,T]}\|\langle x \rangle^m\partial^{\alpha}\Phi(u)\|_{L^{2}_x} \lesssim & \langle T\rangle^m \big(\|J^{M+M_0-N} u_0\|_{L^{2}_x}+\|\langle x \rangle^m \partial^{\alpha}u_0\|_{L^{2}_x}\big)\\
& + |T|\langle T \rangle^m \big(\|J^{M+M_0-N} N(u)\|_{L^{\infty}_TL^{2}_x}+\|\langle x \rangle^m \partial^{\alpha}N(u)\|_{L^{\infty}_TL^{2}_x}\big)\\
\lesssim &\langle T\rangle^m \big(\|J^{M+M_0-N} u_0\|_{L^{2}_x}+\|\langle x \rangle^m \partial^{\alpha}u_0\|_{L^{2}_x}\big)\\
& +|T|\langle T \rangle^m \Big(R^p+\sum_{k=1}^{M+M_0-N}\lambda^{-(2k-p)}R^{2k}\Big)\Big( \sum_{k=0}^{M+M_0-N} \lambda^{-(2(k+1)-p)}R^{2k+1}\Big).
\end{aligned}
\end{equation}

\subsubsection*{\bf Estimate in the space $L^{\infty}([0,T];H^{M+M_0-N}(\mathbb{R}^N))$.}

We consider a multi-index $\alpha$ with $|\alpha|\leq M+M_0-N$. By \eqref{eqmainTH14}, we deduce
\begin{equation}\label{eqmainTH17}
\begin{aligned}
\sup_{t\in[0,T]}\|\partial^{\alpha}\Phi(u)\|_{L^{2}_x}\lesssim & \|u_0\|_{H^{M+M_0-N}}+|T|\|\partial^{\alpha}N(u)\|_{L^{\infty}_T L^2_x} \\
\lesssim &\|u_0\|_{H^{M+M_0-N}}+|T| \Big(R^p+\sum_{k=1}^{M+M_0-N}\lambda^{-(2k-p)}R^{2k}\Big)\Big( \sum_{k=0}^{M+M_0-N} \lambda^{-(2(k+1)-p)}R^{2k+1}\Big).
\end{aligned}
\end{equation}
This completes the study of the $\|\cdot\|_{\mathfrak{X}}$-norm of $\Phi$. 

Let us now deduce some consequences of the previous results. But first, for $\lambda, R>0$, we define
\begin{equation}\label{eqmainTH18} 
\mathcal{G}_1(\lambda,R)=R^p+\sum_{k=1}^{M+M_0-N}\lambda^{-(2k-p)}R^{2k} \, \, \text{ and } \, \, \mathcal{G}_2(\lambda,R)=\sum_{k=0}^{M+M_0-N} \lambda^{-(2(k+1)-p)}R^{2k+1}.
\end{equation} 
Thus, recalling the definition of the $\mathfrak{X}$-norm \eqref{normspace}, and gathering \eqref{eqmainTH15}, \eqref{eqmainTH16} and \eqref{eqmainTH17}, there exists a constant $c>0$ such that 
\begin{equation*}
\begin{aligned}
\|\Phi(u)\|_{\mathfrak{X}}\leq & c\langle T \rangle^{\lfloor \frac{N}{2} \rfloor+1+m}\big(\sum_{|\alpha|\leq \lfloor \frac{N}{2} \rfloor}\|\langle x \rangle^{m}\partial^{\alpha}u_0\|_{L^{\infty}_x}+\sum_{\lfloor \frac{N}{2} \rfloor<|\alpha|\leq M}\|\langle x \rangle^m \partial^{\alpha} u_0\|_{L^2_x}\\
& \hspace{4cm}+\|J^{M+M_0-N}u_0\|_{L^2_x}\big)\\
&+c|T|\langle T \rangle^{\lfloor \frac{N}{2} \rfloor+1+m}\mathcal{G}_1(\lambda,R)\mathcal{G}_2(\lambda,R).
\end{aligned}
\end{equation*}  
Fixing 
\begin{equation*}
R=2c\big(\sum_{|\alpha|\leq \lfloor \frac{N}{2} \rfloor}\|\langle x \rangle^{m}\partial^{\alpha}u_0\|_{L^{\infty}_x}+\sum_{\lfloor \frac{N}{2} \rfloor<|\alpha|\leq M}\|\langle x \rangle^m \partial^{\alpha} u_0\|_{L^2_x}+\|J^{M+M_0-N}u_0\|_{L^2_x}\big)
\end{equation*}
and taking $T>0$ such that
\begin{equation}\label{eqmainTH19}
\begin{aligned}
\frac{1}{2}\langle T \rangle^{\lfloor \frac{N}{2} \rfloor+1+m}+c|T|\langle T \rangle^{\lfloor \frac{N}{2} \rfloor+1+m}R^{-1}\mathcal{G}_1(\lambda,R)\mathcal{G}_2(\lambda,R)\leq 1,
\end{aligned}
\end{equation}
we get $\|\Phi(u)\|_{L^{\infty}_T\mathfrak{X}}\leq R$. 
\\ \\
To prove that $\Phi$ is well-defined in $\mathfrak{X}_T(R,\lambda)$, it only remains to establish the condition $\inf_{(x,t)\in \mathbb{R} \times [0,T]}|\langle x \rangle^m \Phi(x,t)|\geq \frac{\lambda}{2}$. By the arguments around \eqref{eq(III)4} in the proof of Proposition \ref{propinfnorm}, we have that
\begin{equation*}
\begin{aligned}
e^{it\Delta}u_0-u_0=\sum_{j=1}^{k} \frac{(it)^{j}}{j!}\Delta^j u_0+\frac{i^{k+1}}{k!}\int_0^t (t-t')^{k}\Delta^{k+1}(e^{it'\Delta}u_0)\, dt',
\end{aligned}
\end{equation*}
where $k=\lfloor \frac{N}{2} \rfloor$, and therefore,
\begin{equation}\label{eqmainTH20}
\begin{aligned}
\big\|\langle x \rangle^{m} \big(e^{it\Delta}u_0-u_0 \big)\big\|_{L^{\infty}_x}\lesssim &|t|\langle t\rangle^{\lfloor \frac{N}{2} \rfloor+m}\big(\sum_{|\alpha|\leq \lfloor \frac{N}{2} \rfloor}\|\langle x \rangle^{m}\partial^{\alpha}u_0\|_{L^{\infty}_x}\\
&\hspace{3cm}+\sum_{\lfloor \frac{N}{2} \rfloor<|\alpha|\leq M}\|\langle x \rangle^m\partial^{\alpha}u_0\|_{L^2_x}+\|J^{M+M_0-N}u_0\|_{L^2_x} \big).
\end{aligned}
\end{equation}
Additionally,  we apply Proposition \ref{propinfnorm} (see, for instance, \eqref{eqmainTH0}) together with \eqref{eqmainTH2}, \eqref{eqmainTH13} and \eqref{eqmainTH14} to obtain
\begin{equation}\label{eqmainTH21}
\begin{aligned}
\big\|\langle x \rangle^{m}\int_0^t e^{i(t-t')\Delta}N(u) \, dt'\big\|_{L^{\infty}_{T,x}}\lesssim  |T|\langle T \rangle^{\lfloor \frac{N}{2} \rfloor+1+m}\mathcal{G}_1(\lambda,R)\mathcal{G}_2(\lambda,R).
\end{aligned}
\end{equation}
By \eqref{eqmainTH20} and \eqref{eqmainTH21} there exists a constant $c_1=c_1(\mu)>0$ such that
\begin{equation}\label{eqmainTH22}
\begin{aligned}
|\langle x \rangle^m \Phi(u)(x,t)|\geq & |\langle x \rangle^{m}e^{it \Delta}u_0(x)|-|\mu|\big\|\langle x \rangle^{m}\int_0^t e^{i(t-t')\Delta}N(u) \, dt'\big\|_{L^{\infty}_{T,x}} \\
\geq & |\langle x \rangle^m u_0(x)|-\big\|\langle x \rangle^{m}\big( e^{it\Delta}u_0-u_0 \big)\big\|_{L^{\infty}_x}-|\mu|\big\|\langle x \rangle^{m}\int_0^t e^{i(t-t')\Delta}N(u) \, dt'\big\|_{L^{\infty}_{T,x}}\\
\geq & \lambda-c_1|T|\langle T\rangle^{\lfloor \frac{N}{2} \rfloor+1+m}\big(\sum_{|\alpha|\leq \lfloor \frac{N}{2}\rfloor}\|\langle x \rangle^{m}\partial^{\alpha}u_0\|_{L^{\infty}_x}\\
&\hspace{3cm}+\sum_{\lfloor \frac{N}{2} \rfloor< |\alpha|\leq M}\|\langle x \rangle^m\partial^{\alpha}u_0\|_{L^2_x}+\|J^{M+M_0-N}u_0\|_{L^2_x} \big)\\
&-c_1|T|\langle T \rangle^{\lfloor \frac{N}{2} \rfloor+1+m}\mathcal{G}_1(\lambda,R)\mathcal{G}_2(\lambda,R),
\end{aligned}  
\end{equation}
where $\mathcal{G}_1$ and $\mathcal{G}_2$ are defined in \eqref{eqmainTH18}. If we take $T>0$  small such that \eqref{eqmainTH19} holds true and
\begin{equation}\label{eqmainTH22.0}  
\begin{aligned}
c_1|T|\langle T\rangle^{\lfloor \frac{N}{2} \rfloor+1+m}&\big(\sum_{|\alpha|\leq \lfloor \frac{N}{2} \rfloor}\|\langle x \rangle^{m}\partial^{\alpha}u_0\|_{L^{\infty}_x}\\
&+\sum_{\lfloor \frac{N}{2} \rfloor< |\alpha|\leq M}\|\langle x \rangle^m\partial^{\alpha}u_0\|_{L^2_x}+\|J^{M+M_0-N}u_0\|_{L^2_x}+\mathcal{G}_1(\lambda,R)\mathcal{G}_2(\lambda,R)\big)\leq \frac{\lambda}{2}.
\end{aligned}
\end{equation}
Then the inequality \eqref{eqmainTH22} yields
\begin{equation*}
\inf_{(x,t)\in \mathbb{R}\times [0,T]}|\langle x \rangle^m \Phi(u)(x,t)|\geq \frac{\lambda}{2}.
\end{equation*}
We conclude that $\Phi$ maps $\mathfrak{X}_T(R,\lambda)$ into itself. 
\\ \\
Next, we show that for some small $T>0$, $\Phi$ defines a contraction on $\mathfrak{X}_T(R,\lambda)$. But first, to compute the difference $N(u)-N(v)$ in the space $\mathfrak{X}_T(R,\lambda)$, we require the following result.

\begin{prop}\label{propdiffesolu}
Let $b\in \mathbb{R}$, $1\leq r \leq \infty$, and $u, v \in \mathfrak{X}_T(R,\lambda)$. Let $\alpha$ be a multi-index of order $|\alpha|\leq M+M_0-N$, then we have
\begin{equation}\label{eqprop0.1}
\begin{aligned}
\|\langle x \rangle^{b}&\partial^{\alpha}(|u|^p-|v|^{p})\|_{L^{\infty}_{T}L^{r}_x}\\
\lesssim & \big(R^{p-1}+ \lambda^{-(6-2p)}R^{5-p}\big)\|\langle x \rangle^{b-mp}\|_{L^r_x}\|u-v\|_{L^{\infty}_T\mathfrak{X}} \\
&+\sum_{k=1}^{|\alpha|} \lambda^{-2(2k-p)}R^{4k-p-2}\Big(R\|\langle x \rangle^{b-mp}\|_{L^r_x}+\sum_{\lfloor \frac{N}{2}\rfloor<|\beta| \leq |\alpha|}\|\langle x \rangle^{b+m(1-p)}\partial^{\beta}u\|_{L^{\infty}_T L^r_x}\Big)\|u-v\|_{L^{\infty}_T\mathfrak{X}}\\
&+\sum_{k=1}^{|\alpha|} \lambda^{-(2k-p)} \Big(R^{2k-1}\big(\|u-v\|_{L^{\infty}_T\mathfrak{X}}\|\langle x \rangle^{b-mp}\|_{L^r_x}+\sum_{\lfloor \frac{N}{2}\rfloor<|\beta| \leq |\alpha|}\|\langle x \rangle^{b+m(1-p)}\partial^{\beta}(u-v)\|_{L^{\infty}_T L^r_x}\big) \\
& \hspace{1cm} + R^{2k-2}\|u-v\|_{L^{\infty}_T\mathfrak{X}} \big(\sum_{\lfloor \frac{N}{2}\rfloor<|\beta|\leq |\alpha|}\|\langle x \rangle^{b+m(1-p)}\partial^{\beta}u\|_{L^{\infty}_{T}L^r_x}+\|\langle x \rangle^{b+m(1-p)}\partial^{\beta}v\|_{L^{\infty}_{T}L^r_x}\big)\Big).
\end{aligned}
\end{equation}
Additionally,
\begin{equation}\label{eqprop0.2}
\begin{aligned}
\|\langle x \rangle^{b}&\partial^{\alpha}(|u|^{p-2}u-|v|^{p-2}v)\|_{L^{\infty}_TL^{2}_x}\\
\lesssim & \lambda^{-(6-2p)}R^{4-p}\|\langle x \rangle^{b-m(p-1)}\|_{L^{\infty}_T L^r_x}\|u-v\|_{L^{\infty}_T\mathfrak{X}} +\sum_{k=1}^{|\alpha|} \lambda^{-2(2(k+1)-p)}R^{4k-p+1}\Big(R\|\langle x \rangle^{b-m(p-1)}\|_{L^r_x}\\
&\hspace{4cm}+\sum_{\lfloor \frac{N}{2}\rfloor<|\beta| \leq |\alpha|}\|\langle x \rangle^{b+m(2-p)}\partial^{\beta}u\|_{L^{\infty}_T L^r_x}\Big)\|u-v\|_{L^{\infty}_T\mathfrak{X}}\\
&+\sum_{k=1}^{|\alpha|} \lambda^{-(2(k+1)-p)} \Big(R^{2k}\big(\|u-v\|_{L^{\infty}_T\mathfrak{X}}\|\langle x \rangle^{b-m(p-1)}\|_{L^r_x}+\sum_{\lfloor \frac{N}{2}\rfloor<|\beta| \leq |\alpha|}\|\langle x \rangle^{b+m(2-p)}\partial^{\beta}(u-v)\|_{L^{\infty}_T L^r_x}\big) \\
& \hspace{1cm} + R^{2k-1}\|u-v\|_{L^{\infty}_T\mathfrak{X}} \big(\sum_{\lfloor \frac{N}{2}\rfloor<|\beta|\leq |\alpha|}\|\langle x \rangle^{b+m(2-p)}\partial^{\beta}u\|_{L^{\infty}_{T}L^r_x}+\|\langle x \rangle^{b+m(2-p)}\partial^{\beta}v\|_{L^{\infty}_{T}L^r_x}\big)\Big).
\end{aligned}
\end{equation}
\end{prop}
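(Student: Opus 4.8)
The plan is to treat \eqref{eqprop0.1}--\eqref{eqprop0.2} as the difference analogues of the estimates \eqref{eqspaceX1}--\eqref{eqspaceX2} in Proposition \ref{propmain1}, reusing the same combinatorial identities \eqref{identi1} and \eqref{identi2}. First I would apply \eqref{identi1} to both $u$ and $v$ and subtract: schematically each summand of $\partial^{\alpha}(|u|^p)$ has the form $|u|^{p-2k}\prod_{j=1}^{k}\partial^{\beta_j}(|u|^2)$, and the corresponding difference is split by a standard telescoping into (a) a \emph{prefactor difference} $\big(|u|^{p-2k}-|v|^{p-2k}\big)\prod_{j}\partial^{\beta_j}(|u|^2)$ and (b) a sum of \emph{factor differences} $|v|^{p-2k}\,\partial^{\beta_{j_0}}(|u|^2-|v|^2)\prod_{j\neq j_0}\partial^{\beta_j}(|u|^2)$. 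The same decomposition applied to \eqref{identi2} produces the terms for \eqref{eqprop0.2}.

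Two algebraic inputs drive the estimate. For the factor differences I would use
\[
|u|^2-|v|^2=u\,\overline{(u-v)}+(u-v)\,\bar v ,
\]
so that $\partial^{\beta}(|u|^2-|v|^2)$ is linear in $u-v$ and $\overline{u-v}$ and can be treated exactly as in \eqref{eq3propmain1}--\eqref{eq3.1propmain1}, with one derivative landing on a difference; taking the highest-order factor in $L^r_x$ and the remaining low-order factors in weighted $L^\infty_x$ (via the Sobolev embedding $H^{\lfloor N/2\rfloor+1}\hookrightarrow L^\infty$) converts these into powers of $R$ while extracting a factor $\|u-v\|_{L^{\infty}_T\mathfrak{X}}$. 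For the prefactor differences I would set $a=|u|^2$, $b=|v|^2$ and write, for the exponent $\sigma=\tfrac{p-2k}{2}<0$,
\[
a^{\sigma}-b^{\sigma}=\sigma\int_0^1\big((1-t)b+ta\big)^{\sigma-1}(a-b)\,dt .
\]
The decisive observation is that $a,b$ are \emph{positive reals}, so the convex combination obeys $(1-t)b+ta\ge\min(a,b)\ge(\lambda/2)^2\langle x\rangle^{-2m}$ uniformly in $t\in[0,1]$; hence $\big((1-t)b+ta\big)^{\sigma-1}\lesssim\lambda^{2(\sigma-1)}\langle x\rangle^{-2m(\sigma-1)}$, while $|a-b|\le(|u|+|v|)\,|u-v|\lesssim R\,\langle x\rangle^{-2m}\|u-v\|_{L^{\infty}_T\mathfrak{X}}$ since $|u|,|v|\lesssim R\langle x\rangle^{-m}$. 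This is exactly the point where $p<2$ becomes harmless: we differentiate and interpolate the smooth positive quantities $|u|^2,|v|^2$ rather than $u,v$ themselves.

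Finally I would reassemble the weights, distributing $\langle x\rangle^b$ over each term, using $|u|^{-1},|v|^{-1}\lesssim\lambda^{-1}\langle x\rangle^m$ to absorb the negative powers of $|u|,|v|$ and Sobolev embedding to bound the low-order derivative factors by $R$, precisely as in the proof of Proposition \ref{propmain1}. Summing over $k$ and over the multi-index partitions reproduces the structure of \eqref{eqprop0.1}: the factor-difference contributions give the terms carrying $\partial^{\beta}(u-v)$, while the prefactor-difference contributions give the terms in which the integral representation produces one extra power of $\lambda^{-1}$ (from the lower bound on the convex combination) and one extra power of $R$ (from $|a-b|$). I expect the main obstacle to be purely the bookkeeping: verifying that these two mechanisms generate precisely the doubled exponents $\lambda^{-2(2k-p)}R^{4k-p-2}$ (respectively $\lambda^{-2(2(k+1)-p)}R^{4k-p+1}$) in the statement, and that every term is controlled uniformly in $t\in[0,1]$ inside the integral. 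The estimate \eqref{eqprop0.2} is entirely parallel, starting from \eqref{identi2} and telescoping $|u|^{p-2}u-|v|^{p-2}v$ in the same manner.
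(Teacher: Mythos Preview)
Your approach is correct and essentially matches the paper's: the same telescoping via \eqref{identi1}--\eqref{identi2} into prefactor differences $(|u|^{p-2k}-|v|^{p-2k})\prod\partial^{\beta_j}(|u|^2)$ and factor differences, the same treatment of $\partial^{\beta}(|u|^2-|v|^2)$ by expanding $|u|^2-|v|^2$ bilinearly, and the same use of $|u|^{-1},|v|^{-1}\lesssim\lambda^{-1}\langle x\rangle^{m}$ together with Sobolev embedding to close.

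The one point of divergence is the prefactor step, and here your bookkeeping expectation is slightly off. The paper does \emph{not} use an integral representation; for $k\ge 2$ it writes
\[
|u|^{p-2k}-|v|^{p-2k}=\frac{|v|^{2k-p}-|u|^{2k-p}}{|u|^{2k-p}|v|^{2k-p}},
\]
and then bounds the numerator by $\max\{|u|,|v|\}^{2k-p-1}|u-v|$ (here $2k-p>1$), while the denominator contributes $\lambda^{-2(2k-p)}\langle x\rangle^{2m(2k-p)}$. This is what manufactures the ``doubled'' exponents $\lambda^{-2(2k-p)}R^{4k-p-2}$ in the statement. Your mean-value formula instead yields
\[
\big||u|^{p-2k}-|v|^{p-2k}\big|\lesssim\lambda^{-(2k+2-p)}\langle x\rangle^{m(2k+2-p)}(|u|+|v|)\,|u-v|,
\]
with the \emph{smaller} power $2k+2-p$ of $\lambda^{-1}$, so the exponents you obtain will not match the stated ones on the nose. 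This is harmless for the proposition as written: any $u\in\mathfrak X_T(R,\lambda)$ satisfies $\lambda/2\le\|\langle x\rangle^{m}u\|_{L^\infty}\le R$, hence $R/\lambda\gtrsim 1$, and your exponents can be inflated to the stated ones by multiplying by the appropriate power of $R/\lambda$. For the case $k=1$ the paper uses an ad~hoc two-term splitting \eqref{eqprop3} that produces both $\lambda^{-(4-p)}$ and $\lambda^{-(6-2p)}$; your integral gives the first of these directly.
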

\begin{proof}
Let us first consider $|\alpha|=0$. Using the identity
\begin{equation}\label{eqprop1}
\begin{aligned}
||u|^{p}-|v|^p|\lesssim \max\{|u|^{p-1},|v|^{p-1}\}|u-v|,
\end{aligned}
\end{equation}
we have
\begin{equation*}
\begin{aligned}
\|\langle x \rangle^{d}(|u|^{p}-|v|^p)\|_{L^{\infty}_{T} L^r_x}\lesssim \big(\|\langle x \rangle^m u\|_{L^{\infty}_{T,x}}^{p-1}+\|\langle x \rangle^m v\|_{L^{\infty}_{T,x}}^{p-1}\big)\|\langle x \rangle^{d-m(p-1)}\big(u-v\big)\|_{L^{\infty}_T L^r_x}.
\end{aligned}
\end{equation*}
Now we deal with \eqref{eqprop0.1} for any multi-index $\alpha$ of order $|\alpha|\geq 1$. By identity \eqref{identi1},
\begin{equation}\label{eqprop2}
\begin{aligned}
\partial^{\alpha}(|u|^{p}-|v|^p)=&\sum_{k=1}^{|\alpha|}\big(|u|^{p-2k}-|v|^{p-2k}\big)\Big(\sum_{\substack{\beta_1 + \dots + \beta_k=\alpha \\ |\beta_j|\geq 1, \, \, 1\leq j\leq k}} c_{\beta_1,\dots,\beta_k} \partial^{\beta_1}(|u|^2)\dots \partial^{\beta_k}(|u|^2) \Big)\\
&+ \sum_{k=1}^{|\alpha|}|v|^{p-2k}\sum_{l=1}^{k}\Big(\sum_{\substack{\beta_1 + \dots + \beta_k=\alpha \\ |\beta_j|\geq 1, \, \, 1\leq j\leq k}} c_{\beta_1,\dots,\beta_k} \partial^{\beta_1}(|v|^2)\dots \partial^{\beta_{l-1}}(|v|^2) \\
&\qquad \qquad \qquad \qquad \big(\partial^{\beta_l}(|u|^2)-\partial^{\beta_l}(|v|^2)\big)\partial^{\beta_{l+1}}(|u|^2)\dots \partial^{\beta_k}(|u|^2) \Big) \\
=:& \, \mathcal{C}_1+\mathcal{C}_2.
\end{aligned}
\end{equation}
To bound the term $\mathcal{C}_1$, since $|u(x,t)|^{-1}, |v(x,t)|^{-1}\lesssim \lambda^{-1}\langle x\rangle^m$, it follows that
\begin{equation}\label{eqprop3}
\begin{aligned}
||u|^{p-2}-|v|^{p-2}|=&||u|^{p-4}|u|^2-|v|^{p-4}|v|^2|\\
\lesssim & ||u|^{p-4}(|u|^2-|v|^2)|+|v|^2||u|^{p-4}-|v|^{p-4}| \\
\lesssim & \lambda^{-(4-p)}\langle x \rangle^{m(4-p)}(|u|+|v|)(|u-v|)+\frac{1}{|u|^{4-p}|v|^{2-p}}||v|^{4-p}-|u|^{4-p}| \\
\lesssim & \lambda^{-(4-p)}\langle x \rangle^{m(4-p)}(|u|+|v|)(|u-v|)+\lambda^{-(6-2p)}\langle x \rangle^{m(6-2p)}\max\{|u|^{3-p},|v|^{3-p}\}|u-v|.
\end{aligned}
\end{equation}
Similarly, for $k\geq 2$, we deduce
\begin{equation}\label{eqprop4}
\begin{aligned}
||u|^{p-2k}-|v|^{p-2k}|&=\frac{||v|^{2k-p}-|u|^{2k-p}|}{|u|^{2k-p}|v|^{2k-p}}\\
&\lesssim \lambda^{-2(2k-p)}\langle x\rangle^{2m(2k-p)}\max\{|u|^{2k-p-1},|v|^{2k-p-1}\}|u-v|.
\end{aligned}
\end{equation}
Hence, by \eqref{eqprop3} and \eqref{eqprop4}, we have
\begin{equation*}
\begin{aligned}
\|\langle x \rangle^{b}\mathcal{C}_1\|_{L^{\infty}_T L^r_x} \lesssim & \lambda^{-(4-p)}\big(\|\langle x\rangle^m u\|_{L^{\infty}_{T,x}}+\|\langle x\rangle^m v\|_{L^{\infty}_{T,x}}\big)\big(\sup_{|\gamma|\leq 1}\|\langle x \rangle^m\partial^{\gamma}u\|_{L^{\infty}_{T,x}}\big)^2\\
& \qquad \times \|\langle x \rangle^{m}(u-v)\|_{L^{\infty}_{T,x}}\|\langle x \rangle^{b-mp}\|_{L^r_x}\\
 & +\lambda^{-(6-2p)}\big(\|\langle x\rangle^m u\|_{L^{\infty}_{T,x}}^{3-p}+\|\langle x\rangle^m v\|_{L^{\infty}_{T,x}}^{3-p}\big)\big(\sup_{|\gamma|\leq 1}\|\langle x \rangle^m\partial^{\gamma}u\|_{L^{\infty}_{T,x}}\big)^2\\
& \qquad \times \|\langle x \rangle^{m}(u-v)\|_{L^{\infty}_{T,x}}\|\langle x \rangle^{b-mp}\|_{L^r_x}\\
 & +\sum_{k=2}^{|\alpha|}\lambda^{-2(2k-p)}\big(\|\langle x\rangle^m u\|_{L^{\infty}_{T,x}}^{2k-p-1}+\|\langle x\rangle^m v\|_{L^{\infty}_{T,x}}^{2k-p-1} \big)\|\langle x \rangle^{m}(u-v)\|_{L^{\infty}_{T,x}}\\
& \qquad \times \sum_{\substack{\beta_1 + \dots + \beta_k=\alpha\\ |\beta_j|\geq 1 \, \, 1\leq j \leq k}}\|\langle x \rangle^{b+m(2k-p)} \partial^{\beta_1}(|u|^2)\dots \partial^{\beta_k}(|u|^2)\|_{L^{\infty}_T L^r_x}\\
=:& \, \mathcal{C}_{1,1}+\mathcal{C}_{1,2}+\mathcal{C}_{1,3}.
\end{aligned}
\end{equation*}
Hence, to finish the estimate of $\mathcal{C}_1$, it only remains to control $\mathcal{C}_{1,3}$.  Applying the same partition as in \eqref{eq2propmain1} and the arguments in \eqref{eq3propmain1}-\eqref{eq4propmain1}, we deduce
\begin{equation*}
\begin{aligned}
\mathcal{C}_{1,3}\lesssim &\sum_{k=1}^{|\alpha|}\lambda^{-2(2k-p)}\big(\|\langle x\rangle^m u\|_{L^{\infty}_{T,x}}^{2k-p-1}+\|\langle x\rangle^m v\|_{L^{\infty}_{T,x}}^{2k-p-1} \big)\|\langle x \rangle^{m}(u-v)\|_{L^{\infty}_{T,x}}\\
&\hspace{1cm}\times R^{2k-1}\Big(R\|\langle x \rangle^{b-mp}\|_{L^r_x}+\sum_{\lfloor m\rfloor+\lfloor \frac{N}{2} \rfloor+2 <|\gamma|\leq |\alpha|}\|\langle x \rangle^{b+m(1-p)}\partial^{\gamma} u\|_{L^{\infty}_{T}L^r_x}\Big).
\end{aligned}
\end{equation*}
Next, we deal with $\mathcal{C}_2$. We write
\begin{equation*}
\begin{aligned}
\big(\partial^{\beta}(|u|^2)-\partial^{\beta}(|v|^2)\big)=\sum_{\beta_1+\beta_2=\beta}c_{\beta_1,\beta_2}(\partial^{\beta_1}u-\partial^{\beta_1}v)\partial^{\beta_2}\overline{u}+\partial^{\beta_1}v(\partial^{\beta_2}\overline{u}-\partial^{\beta_2}\overline{v}),
\end{aligned}
\end{equation*}
for any multi-index $|\beta|\geq 1$. Then, by using the above identity and dividing the corresponding sum as in \eqref{eq2propmain1}, we obtain
\begin{equation*}
\begin{aligned}
\|\langle x\rangle^{b}\mathcal{C}_2&\|_{L^{\infty}_T L^r_x}\\
\lesssim & \sum_{k=1}^{|\alpha|}\sum_{l=1}^k \sum_{\substack{\beta_1 + \dots + \beta_k=\alpha\\ |\beta_j|\geq 1 \, \, 1\leq j \leq k}}\lambda^{-(2k-p)}\|\langle x \rangle^{b+m(2k-p)} \partial^{\beta_1}(|v|^2)\dots \partial^{\beta_{l-1}}(|v|^2)\\
& \hspace{4cm}\times\big(\partial^{\beta_l}(|u|^2)-\partial^{\beta_1}(|v|^2)\big)\partial^{\beta_{l+1}}(|u|^2)\dots \partial^{\beta_k}(|u|^2)\|_{L^{\infty}_T L^2_x}\\
\lesssim & \sum_{k=1}^{|\alpha|}\lambda^{-(2k-p)}\Big( R^{2k-1}\big(\sum_{|\beta|\leq \lfloor \frac{N}{2} \rfloor}\|\langle x \rangle^{m}\partial^{\beta}(u-v)\|_{L^{\infty}_{T,x}}\|\langle x \rangle^{b-mp}\|_{L^{r}_x}\\
&\hspace{5cm}+\sum_{\lfloor \frac{N}{2} \rfloor<|\beta|\leq |\alpha|}\|\langle x \rangle^{b+m(1-p)}\partial^{\beta}(u-v)\|_{L^{\infty}_{T}L^r_x}\big)\\
 & +R^{2k-2}\|u-v\|_{L^{\infty}_T\mathfrak{X}}\big(\sum_{\lfloor \frac{N}{2} \rfloor<|\beta|\leq |\alpha|}\|\langle x \rangle^{b+m(1-p)}\partial^{\beta}u\|_{L^{\infty}_{T}L^r_x}+\|\langle x \rangle^{b+m(1-p)}\partial^{\beta}v\|_{L^{\infty}_{T}L^r_x}\big)\Big).
\end{aligned}
\end{equation*}
Hence, the proof of \eqref{eqprop0.1} is complete. 
\\ \\
Next, we turn to \eqref{eqprop0.2}. By identity \eqref{identi2}, we get
\begin{equation*}
\begin{aligned}
\partial^{\alpha}\big(|u|^{p-2}u&-|v|^{p-2}v\big)\\
=&\sum_{k=1}^{|\alpha|} \big(|u|^{p-2(k+1)}-|v|^{p-2(k+1)}\big)\sum_{\substack{\beta_0+\beta_1+\dots+\beta_k=\alpha, \\ |\beta_j|\geq 1, \, \, 1\leq j \leq k}}c_{\beta_0,\dots,\beta_k}\partial^{\beta_0}u\partial^{\beta_1}(|u|^2)\dots \partial^{\beta_k}(|u|^2)\\
&+\sum_{k=1}^{|\alpha|} |v|^{p-2(k+1)}\sum_{\substack{\beta_0+\beta_1+\dots+\beta_k=\alpha, \\ |\beta_j|\geq 1, \, \, 1\leq j \leq k}}\Big(c_{\beta_0,\dots,\beta_k}\big(\partial^{\beta_0}u-\partial^{\beta_0}v\big)\partial^{\beta_1}(|u|^2)\dots \partial^{\beta_k}(|u|^2)\\
&\qquad \quad+\sum_{l=1}^{k}c_{\beta_0,\dots,\beta_k}\partial^{\beta_0}v\partial^{\beta_1}(|v|^2)\dots\partial^{\beta_{l-1}}(|v|^2)\\
&\hspace{4cm}\times\big(\partial^{\beta_l}(|u|^2)-\partial^{\beta_l}(|v|^2)\big)\partial^{\beta_{l+1}}(|u|^2)\dots \partial^{\beta_{k}}(|u|^2)\Big) \\
&+ \big(|u|^{p-2}-|v|^{p-2}\big)\partial^{\alpha}u+|v|^{p-2}(\partial^{\alpha}u-\partial^{\alpha}v)\\
=:& \, \widetilde{\mathcal{C}}_1+\widetilde{\mathcal{C}}_2+\widetilde{\mathcal{C}}_3.
\end{aligned}
\end{equation*}
We proceed to analyze each term $\widetilde{\mathcal{C}}_j$, $1\leq j \leq 3$. Applying \eqref{eqprop4}, and using a similar partition to \eqref{eq7propmain1}, we get 
\begin{equation*}
\begin{aligned}
\|\langle x \rangle^{b}\widetilde{\mathcal{C}}_1\|_{L^{\infty}_T L^r_x}\lesssim & \sum_{k=1}^{|\alpha|} \lambda^{-2(2(k+1)-p)}\big(\|\langle x \rangle^{m}u\|_{L^{\infty}_{T,x}}^{2k-p+1}+\|\langle x \rangle^{m}v\|_{L^{\infty}_{T,x}}^{2k-p+1}\big)\|\langle x \rangle^m(u-v)\|_{L^{\infty}_{T,x}}\\
& \qquad \qquad \times\sum_{\substack{\beta_0+\beta_1+\dots+\beta_k=\alpha, \\ |\beta_j|\geq 1, \, \, 1\leq j \leq k}}\|\langle x\rangle^{b+m(2k+2-p)}\partial^{\beta_0}u\partial^{\beta_1}(|u|^2)\dots \partial^{\beta_k}(|u|^2)\|_{L^{\infty}_{T}L^{r}_x} \\
\lesssim & \sum_{k=1}^{|\alpha|} \lambda^{-2(2(k+1)-p)}\big(\|\langle x \rangle^{m}u\|_{L^{\infty}_{T,x}}^{2k-p+1}+\|\langle x \rangle^{m}v\|_{L^{\infty}_{T,x}}^{2k-p+1}\big)\|\langle x \rangle^m(u-v)\|_{L^{\infty}_{T,x}}\\
& \qquad \qquad \times R^{2k} \Big(R\|\langle x\rangle^{b-m(p-1)}\|_{L^r_x}+\sum_{\lfloor \frac{N}{2}\rfloor<|\beta|\leq |\alpha|}\|\langle x\rangle^{b+m(2-p)}\partial^{\beta}u\|_{L^{\infty}_TL^{r}_x}\Big).
\end{aligned}
\end{equation*}
In a similar manner, we deduce
\begin{equation*}
\begin{aligned}
\|\langle x \rangle^{b} &\widetilde{\mathcal{C}}_2\|_{L^{\infty}_T L^r_x} \\
\lesssim & \sum_{k=1}^{|\alpha|} \lambda^{-(2(k+1)-p)}\sum_{\substack{\beta_0+\beta_1+\dots+\beta_k=\alpha, \\ |\beta_j|\geq 1, \, \, 1\leq j \leq k}}\Big(\|\langle x\rangle^{b+m(2k+2-p)}(\partial^{\beta_0}u-\partial^{\beta_0}u)\partial^{\beta_1}(|u|^2)\dots \partial^{\beta_k}(|u|^2)\|_{L^{\infty}_TL^{r}_x} \\
& \qquad +\sum_{l=1}^{k}\|\langle x\rangle^{b+m(2k+2-p)} \partial^{\beta_0}v\partial^{\beta_1}(|v|^2)\dots\partial^{\beta_{l-1}}(|v|^2)\\
&\hspace{4cm}\times\big(\partial^{\beta_l}(|u|^2)-\partial^{\beta_l}(|v|^2)\big)\partial^{\beta_{l+1}}(|u|^2)\dots \partial^{\beta_{k}}(|u|^2)\|_{L^{\infty}_TL^{r}_x}\Big)\\
\lesssim & \sum_{k=1}^{|\alpha|} \lambda^{-(2(k+1)-p)}\Big(R^{2k}\big(\|u-v\|_{L^{\infty}_T\mathfrak{X}}\|\langle x \rangle^{b-m(p-1)}\|_{L^r_x}\\
& \hspace{3cm}+ \sum_{\lfloor\frac{N}{2} \rfloor<|\beta|\leq |\alpha|}\|\langle x \rangle^{b+m(2-p)}\partial^{\beta}(u-v)\|_{L^{\infty}_TL^r_x}\big)\\
& \hspace{1cm}+R^{2k-1}\|u-v\|_{L^{\infty}_T\mathfrak{X}}\sum_{\lfloor\frac{N}{2} \rfloor<|\beta|\leq |\alpha|}\big(\|\langle x \rangle^{b+m(2-p)}\partial^{\beta}u\|_{L^{\infty}_TL^r_x}+\|\langle x \rangle^{b+m(2-p)}\partial^{\beta}v\|_{L^{\infty}_TL^r_x}\big)\Big).
\end{aligned}
\end{equation*}
Finally, by \eqref{eqprop3}, we find 
\begin{equation*}
\begin{aligned}
\|\langle x \rangle^b\widetilde{\mathcal{C}}_3\|_{L^{\infty}_T L^r_x}\lesssim & \lambda^{-(4-p)}\big(\|\langle x \rangle^m u\|_{L^{\infty}_{T,x}}+\|\langle x \rangle^m v\|_{L^{\infty}_{T,x}}\big)\|\langle x \rangle^{b+m(2-p)}\partial^{\alpha}u\|_{L^{\infty}_T L^r_x}\|\langle x \rangle^m (u-v)\|_{L^{\infty}_{T,x}}\\
&+\lambda^{-(6-2p)}\big(\|\langle x \rangle^m u\|_{L^{\infty}_{T,x}}^{3-p}+\|\langle x \rangle^m v\|_{L^{\infty}_{T,x}}^{3-p}\big)\|\langle x\rangle^{b+m(2-p)}\partial^{\alpha}u\|_{L^{\infty}_T L^r_x}\|\langle x \rangle^m(u-v)\|_{L^{\infty}_{T,x}} \\
&+\lambda^{-(2-p)}\|\langle x \rangle^{b+m(2-p)}\partial^{\alpha}(u-v)\|_{L^{\infty}_T L^r_x},
\end{aligned}
\end{equation*}
and since
\begin{equation*}
\begin{aligned}
\|&\langle x \rangle^{b+m(2-p)}\partial^{\alpha}u\|_{L^{\infty}_T L^r_x}\\
&\lesssim \sum_{|\beta|\leq \lfloor \frac{N}{2} \rfloor}\|\langle x\rangle^m\partial^{\beta}u\|_{L^{\infty}_{T,x}}\|\langle x \rangle^{b-m(p-1)}\|_{L^{\infty}_T L^r_x} +\sum_{\lfloor \frac{N}{2} \rfloor<|\beta|\leq |\alpha|}\|\langle x\rangle^{b+m(2-p)}\partial^{\beta}u\|_{L^{\infty}_{T}L^r_x},
\end{aligned}
\end{equation*}
the deduction of \eqref{eqprop0.2} is complete.
\end{proof}
%%%%%%%%%%%%%%%%%%%%%%%%%%%%%%%%%%%%%%%%%%%%%%%%%%%%%%%%%%%%%%%%%%%%%%%%%%%%%%%%%%%%%%%%%%%%%%%%%%%%%%%%%%%%%%%%%%%%%%%%%%%%%%%%%%%%%%%%%%%%%%%%%%%%%%%%%%%%%%%%%%%%%%%%%%%%%%%%%%%%%%%%

Now, we compute the difference $\Phi(u)-\Phi(v)$ in the space $\mathfrak{X}_T(R,\lambda)$. We divide our consideration into several cases depending on each component in the norm \eqref{normspace} used in the space $\mathfrak{X}_T(R,\lambda)$.

\subsubsection*{\bf Estimate in the space $L^{\infty}([0,T];W^{\lfloor \frac{N}{2} \rfloor,\infty}(\mathbb{R}^N;\langle x \rangle^m \, dx))$.}

Let $\beta$ be a multi-index of order $|\beta|\leq \lfloor \frac{N}{2} \rfloor$ and $u,v \in \mathfrak{X}_T(R,\lambda)$. By the arguments leading to \eqref{eqmainTH0}, we find
\begin{equation}\label{eqmainTH22.1}
\begin{aligned}
\sup_{t\in[0,T]}&\|\langle x \rangle^m \partial^{\beta}\big(\Phi(u)-\Phi(v)\big)\|_{L^{\infty}_x} \\
\lesssim &|T|\langle T \rangle^{\lfloor \frac{N}{2} \rfloor+1+m} \Big(\sum_{|\alpha|\leq \lfloor \frac{N}{2} \rfloor}\|\langle x \rangle^m \partial^{\alpha}\big(N(u)-N(v)\big)\|_{L^{\infty}_{T,x}}\\
&\hspace{1cm}+\sum_{\lfloor \frac{N}{2} \rfloor<|\alpha|\leq M}\|\langle x \rangle^m\partial^{\alpha}\big(N(u)-N(v)\big)\|_{L^{\infty}_TL^2_x}+\|J^{M+M_0-N}\big(N(u)-N(v)\big)\|_{L^{\infty}_TL^2_x}\Big).
\end{aligned}
\end{equation}
We proceed to estimate each term on the right-hand side of the above inequality.
\\ \\
{\bf Estimate for $\|\langle x \rangle^m \partial^{\alpha} \big(N(u)-N(v)\big)\|_{L^{\infty}_{T,x}}$, $|\alpha|\leq \lfloor \frac{N}{2} \rfloor$}. We divide our analysis as follows
\begin{equation*}
\begin{aligned}
\|\langle x \rangle^{m}\partial^{\alpha}&(N(u)-N(v))\|_{L^{\infty}_{T,x}}\\
\lesssim & \|\langle x \rangle^{m}\partial^{\alpha}\big((|x|^{-(N-\gamma)}\ast (|u|^p-|v|^p))|u|^{p-2}u\big)\|_{L^{\infty}_{T,x}}\\
&+\|\langle x \rangle^m\partial^{\alpha}\big(\big(|x|^{-(N-\gamma)}\ast |v|^p\big)\big(|u|^{p-2}u-|v|^{p-2}v\big)\big)\|_{L^{\infty}_{T,x}}\\
\lesssim &\sum_{\alpha_1+\alpha_2=\alpha} \|\langle x \rangle^{m(2-p)}\partial^{\alpha_1}\big(|x|^{-(N-\gamma)}\ast (|u|^p-|v|^p)\big)\|_{L^{\infty}_{T,x}}\|\langle x \rangle^{m(p-1)}\partial^{\alpha_2}(|u|^{p-2}u)\|_{L^{\infty}_{T,x}}\\
& \qquad+\|\langle x \rangle^{m(2-p)}\partial^{\alpha_1}\big(|x|^{-(N-\gamma)}\ast |v|^p\big)\|_{L^{\infty}_{T,x}}\|\langle x \rangle^{m(p-1)}\partial^{\alpha_2}(|u|^{p-2}u-|v|^{p-2}v)\|_{L^{\infty}_{T,x}}.
\end{aligned}
\end{equation*}
By applying the same arguments in \eqref{eqmainTH1.0} and Proposition \ref{propdiffesolu}, yields
\begin{equation}\label{eqmainTH23}
\begin{aligned}
\|\langle x \rangle^{m(2-p)}\partial^{\alpha_1}&\big(|x|^{-(N-\gamma)}\ast (|u|^p-|v|^p)\big)\|_{L^{\infty}_{T,x}} \\
\lesssim & 
\sum_{ |\beta|\leq 2\lfloor \frac{N}{2} \rfloor+1}\|\langle x \rangle^{m(2-p)}\partial^{\beta}(|u|^p-|v|^p)\|_{L^{\infty}_{T}L^2_x}\\
\lesssim & \Big(R^{p-1}+\lambda^{-(6-2p)}R^{5-p}+\sum_{k=1}^{2\lfloor \frac{N}{2} \rfloor+1}\lambda^{-2(2k-p)}R^{4k-p-1}+\lambda^{-(2k-p)}R^{2k-1} \Big)\|u-v\|_{L^{\infty}_T\mathfrak{X}}.
\end{aligned}
\end{equation}
Similarly, an application of Proposition \ref{propdiffesolu} allows us to conclude
\begin{equation}\label{eqmainTH24}
\begin{aligned}
\|\langle x \rangle^{m(p-1)}\partial^{\alpha_2}&(|u|^{p-2}u-|v|^{p-2}v)\|_{L^{\infty}_{T,x}}\\
\lesssim & \Big( \lambda^{-(6-2p)}R^{4-p}+\sum_{k=1}^{\lfloor \frac{N}{2} \rfloor} \lambda^{-2(2(k+1)-p)}R^{4k-p+2}+\lambda^{-(2(k+1)-p)}R^{2k}\Big)\|u-v\|_{L^{\infty}_T\mathfrak{X}}.
\end{aligned}
\end{equation}
We combine \eqref{eqmainTH1.1}, \eqref{eqmainTH1.2}, \eqref{eqmainTH23} and \eqref{eqmainTH24} to deduce
\begin{equation}\label{eqmainTH24.1}
\begin{aligned}
\|\langle x \rangle^{m}\partial^{\alpha}&(N(u)-N(v))\|_{L^{\infty}_{T,x}}\lesssim \Big(\mathcal{J}_1(\lambda,R)\mathcal{G}_2(\lambda,R)+\mathcal{G}_1(\lambda,R)\mathcal{J}_2(\lambda,R)\Big)\|u-v\|_{L^{\infty}_T\mathfrak{X}},
\end{aligned}
\end{equation}
where $\mathcal{G}_1(\lambda,R)$ and $\mathcal{G}_2(\lambda,R)$ are defined in \eqref{eqmainTH18} and we have set
\begin{equation}\label{Jdefi}
\begin{aligned}
&\mathcal{J}_1(\lambda,R)=R^{p-1}+\lambda^{-(6-2p)}R^{5-p}+\sum_{k=1}^{M+M_0-N}\lambda^{-2(2k-p)}R^{4k-p-1}+\lambda^{-(2k-p)}R^{2k-1}, \\
&\mathcal{J}_2(\lambda,R)= \lambda^{-(6-2p)}R^{4-p}+\sum_{k=1}^{M+M_0-N} \lambda^{-2(2(k+1)-p)}R^{4k-p+2}+\lambda^{-(2(k+1)-p)}R^{2k}.
\end{aligned}
\end{equation}
\\ \\
{\bf Estimate for $\|\langle x \rangle^m \partial^{\alpha}\big(N(u)-N(v)\big)\|_{L^{\infty}_{T}L^2_x}$, $ \lfloor \frac{N}{2} \rfloor<|\alpha|\leq M$}. We use Leibniz's rule to get
\begin{equation}\label{eqmainTH25}
\begin{aligned}
\|\langle x \rangle^m\partial^{\alpha}\big(N(u)-N(v)\big)\|_{L^{\infty}_TL^{2}_x}\lesssim \sum_{\alpha_1+\alpha_2=\alpha}&\|\langle x \rangle^m \partial^{\alpha_1}\big(|x|^{-(N-\gamma)}\ast (|u|^{p}-|v|^{p})\big)\partial^{\alpha_2}\big(|u|^{p-2}u\big)\|_{L^{\infty}_T L^2_x}\\
&+\|\langle x \rangle^m \partial^{\alpha_1}\big(|x|^{-(N-\gamma)}\ast |v|^{p}\big)\partial^{\alpha_2}\big(|u|^{p-2}u-|v|^{p-2}v\big)\|_{L^{\infty}_T L^2_x}.
\end{aligned}
\end{equation}
Let us further divide our analysis according to the magnitude of the multi-index $\alpha_1$ in \eqref{eqmainTH3}.

\underline{Case $|\alpha_1| <N$}. Recalling the  conditions \eqref{eqcondi1}-\eqref{eqcondi4}, and the arguments leading to \eqref{eqmainTH4.1}, we apply Proposition \ref{propdiffesolu} to deduce
\begin{equation*}
\begin{aligned}
\|\langle x \rangle^{m\kappa}& \big(|x|^{-(N-\gamma)}\ast \partial^{\alpha_1}(|u|^{p}-|v|^{p})\big)\|_{L^{\infty}_{T}L^{r_1}_x} \\
\lesssim &  \|\langle x \rangle^{m\kappa}\partial^{\alpha_1}(|u|^{p}-|v|^{p})\|_{L^{\infty}_{T}L^{q_2}_x}\\
\lesssim & \Big( R^{p-1}+\lambda^{-(6-2p)}R^{5-p}\|\langle x \rangle^{-m(p-\kappa)}\|_{L^{q_2}_x}\\
& \qquad +\|\langle x \rangle^{-m(p-\kappa)}\|_{L^{q_2}_x}\sum_{k=1}^{M+M_0-N}\lambda^{-2(2k-p)}R^{4k-p-1}+\lambda^{-(2k-p)}R^{2k-1}\Big)\|u-v\|_{L^{\infty}_T\mathfrak{X}}\\
\lesssim & \mathcal{J}_1(\lambda,R)\|u-v\|_{L^{\infty}_T\mathfrak{X}}.
\end{aligned}
\end{equation*}
Likewise, by similar arguments as in \eqref{eqmainTH4} and using Proposition \ref{propdiffesolu}, we deduce 
\begin{equation*}
\begin{aligned}
\|\langle x \rangle^{m(1-\kappa)}\partial^{\alpha_2}\big(|u|^{p-2}u-|v|^{p-2}v\big)\|_{L^{\infty}_T L^{r_2}_x}\lesssim \mathcal{J}_2(\lambda,R)\|u-v\|_{L^{\infty}_T\mathfrak{X}}.
\end{aligned}
\end{equation*}
Then, we collect the previous results and similar considerations as in \eqref{eqmainTH6} to find
\begin{equation}\label{eqmainTH25.1}
\begin{aligned}
\sum_{\substack{\alpha_1+\alpha_2=\alpha}}&\|\langle x \rangle^m \partial^{\alpha_1}\big(|x|^{-(N-\gamma)}\ast (|u|^{p}-|v|^{p})\big)\partial^{\alpha_2}\big(|u|^{p-2}u\big)\|_{L^{\infty}_T L^2_x}\\
&+\|\langle x \rangle^m \partial^{\alpha_1}\big(|x|^{-(N-\gamma)}\ast |v|^{p}\big)\partial^{\alpha_2}\big(|u|^{p-2}u-|v|^{p-2}v\big)\|_{L^{\infty}_T L^2_x}\\
\lesssim & \Big(\mathcal{J}_1(\lambda,R)\mathcal{G}_{2}(\lambda,R)+\mathcal{G}_{1}(\lambda,R)\mathcal{J}_2(\lambda,R)\Big)\|u-v\|_{L^{\infty}_T\mathfrak{X}},
\end{aligned}
\end{equation}
for all $|\alpha_1|\leq N$.
\\ \\
\underline{Case $N\leq |\alpha_1| \leq M$}. By using \eqref{eqmainTH7} and \eqref{eqmainTH12}, we find
\begin{equation}\label{eqmainTH26}
\begin{aligned}
\|\langle x \rangle^m & \partial^{\alpha_1}\big(|x|^{-(N-\gamma)}\ast (|u|^{p}-|v|^{p})\big)\partial^{\alpha_2}\big(|u|^{p-2}u\big)\|_{L^{\infty}_T L^2_x}\\
&\hspace{0.5cm}+\|\langle x \rangle^m \partial^{\alpha_1}\big(|x|^{-(N-\gamma)}\ast |v|^{p}\big)\partial^{\alpha_2}\big(|u|^{p-2}u-|v|^{p-2}v\big)\|_{L^{\infty}_T L^2_x}\\
&\lesssim \|\langle x \rangle^{m(2-p)} \partial^{\alpha_1}\big(|x|^{-(N-\gamma)}\ast (|u|^{p}-|v|^{p})\big)\|_{L^{\infty}_T L^2_x}\mathcal{G}_2(\lambda,R)\\
&\hspace{1cm}+\mathcal{G}_1(\lambda,R)\|\langle x \rangle^{m(p-1)}\partial^{\alpha_2}\big(|u|^{p-2}u-|v|^{p-2}v\big)\|_{L^{\infty}_{T,x}}.
\end{aligned}
\end{equation}
Now, by the same line of arguments as in the estimates \eqref{eqmainTH8.1}-\eqref{eqmainTH11}, it follows that
\begin{equation*}
\begin{aligned}
\|\langle x \rangle^{m(2-p)} \partial^{\alpha_1} \big(|x|^{-(N-\gamma)}\ast (|u|^{p}-|v|^{p})\big)\|_{L^{\infty}_T L^2_x}\lesssim & \sup_{|\beta|\leq M} \|\langle x \rangle^{m(2-p)+m\epsilon}\partial^{\beta}(|u|^{p}-|v|^{p})\|_{L^{\infty}_T L^2_x}\\
 &+ \|J^{M+M_0-N}(|u|^{p}-|v|^{p})\|_{L^{\infty}_T L^2_x},
\end{aligned}
\end{equation*}
where $\epsilon>0$ is given by \eqref{epsicond}. Since $\langle x \rangle^{-2m(p-1)+m\epsilon}\in L^2(\mathbb{R}^N)$, Proposition \ref{propdiffesolu} yields
\begin{equation*}
\sup_{|\beta|\leq M} \|\langle x \rangle^{m(2-p)+m\epsilon}\partial^{\beta}(|u|^{p}-|v|^{p})\|_{L^{\infty}_T L^2_x}\lesssim \mathcal{J}_1(\lambda,R)\|u-v\|_{L^{\infty}_T\mathfrak{X}}.
\end{equation*}
A further application of Proposition \ref{propdiffesolu} gives
\begin{equation*}
\sup_{|\beta|\leq M} \|J^{M+M_0-N}(|u|^{p}-|v|^{p})\|_{L^{\infty}_T L^2_x}\lesssim \mathcal{J}_1(\lambda,R)\|u-v\|_{L^{\infty}_T\mathfrak{X}}.
\end{equation*}
Finally, we use \eqref{eqprop0.2} to deduce
\begin{equation*}
\|\langle x \rangle^{m(p-1)}\partial^{\alpha_2}\big(|u|^{p-2}u-|v|^{p-2}v\big)\|_{L^{\infty}_{T,x}}\lesssim \mathcal{J}_2(\lambda,R)\|u-v\|_{L^{\infty}_T\mathfrak{X}}.
\end{equation*}
Plugging the previous estimates into \eqref{eqmainTH26} allows us to conclude that \eqref{eqmainTH25.1} holds true for all $|\alpha|\leq M$. This in turn provides the estimate 
\begin{equation} \label{eqmainTH27}             
\sum_{\lfloor \frac{N}{2} \rfloor<|\alpha|\leq M}\|\langle x \rangle^m\partial^{\alpha}\big(N(u)-N(v)\big)\|_{L^{\infty}_TL^{2}_x}\lesssim \Big(\mathcal{J}_1(\lambda,R)\mathcal{G}_2(\lambda,R)+\mathcal{G}_1(\lambda,R)\mathcal{J}_2(\lambda,R)\Big)\|u-v\|_{L^{\infty}_T\mathfrak{X}}.
\end{equation}
\\ \\
{\bf Estimate for $\|J^{M+M_0-N}\big(N(u)-N(v)\big)\|_{L^{\infty}_{T}L^2_x}$}. Following the same reasoning leading to \eqref{eqmainTH14}, using Proposition \ref{propdiffesolu}, we deduce 
\begin{equation}\label{eqmainTH28}
\begin{aligned}
\|J^{M+M_0-N} \big(N(u)-N(v)\big)\|_{L^{\infty}_{T}L^2_x} \lesssim \Big(\mathcal{J}_1(\lambda,R)\mathcal{G}_2(\lambda,R)+\mathcal{G}_1(\lambda,R)\mathcal{J}_2(\lambda,R)\Big)\|u-v\|_{L^{\infty}_T\mathfrak{X}}.
\end{aligned}
\end{equation}
To avoid falling into repetition, we omit the deduction of the above estimate.

Combining \eqref{eqmainTH24.1}, \eqref{eqmainTH27}, \eqref{eqmainTH28} and \eqref{eqmainTH22.1}, we find
\begin{equation*}
\begin{aligned}
\sup_{t\in[0,T]}\|\langle x \rangle^m \partial^{\beta}&\big(\Phi(u)-\Phi(v)\big)\|_{L^{\infty}_x} \\
\lesssim &|T|\langle T \rangle^{\lfloor \frac{N}{2} \rfloor+1+m} \Big(\mathcal{J}_1(\lambda,R)\mathcal{G}_2(\lambda,R)+\mathcal{G}_1(\lambda,R)\mathcal{J}_2(\lambda,R)\Big)\|u-v\|_{L^{\infty}_T\mathfrak{X}}.
\end{aligned}
\end{equation*}
This completes the required analysis on the space $L^{\infty}([0,T];W^{\lfloor \frac{N}{2} \rfloor,\infty}(\mathbb{R}^N;\langle x \rangle^m \, dx))$.
\\ \\
Next, by the inequalities \eqref{eqmainTH24.1}, \eqref{eqmainTH27} and \eqref{eqmainTH28}, we can argue as in \eqref{eqmainTH16}  and  \eqref{eqmainTH17} to control the remaining terms of the $\|\cdot\|_{\mathfrak{X}}$-norm for the difference $\Phi(u)-\Phi(v)$, $u,v \in \mathfrak{X}_T(R,\lambda)$. Thus, we get
\begin{equation*}
\begin{aligned}
\sum_{\lfloor \frac{N}{2} \rfloor<|\alpha|\leq M}&\|\langle x \rangle^m\partial^{\alpha}\big(\Phi(u)-\Phi(v)\big)\|_{L^{\infty}_T L^{2}_x}+\|J^{M+M_0-N}\big(\Phi(u)-\Phi(v)\big)\|_{L^{\infty}_TL^{2}_x} \\
&\lesssim |T|\langle T \rangle^m \Big(\mathcal{J}_1(\lambda,R)\mathcal{G}_2(\lambda,R)+\mathcal{G}_1(\lambda,R)\mathcal{J}_2(\lambda,R)\Big)\|u-v\|_{L^{\infty}_T\mathfrak{X}}.
\end{aligned}
\end{equation*}
Consequently, there exists a constant $c_2>0$ such that the above estimates are summarized as follows
\begin{equation*}
\begin{aligned}
\|\Phi(u)-\Phi(v)\|_{L^{\infty}_T\mathfrak{X}}\leq c_3|T|\langle T \rangle^{\lfloor \frac{N}{2}\rfloor+1+m} \Big(\mathcal{J}_1(\lambda,R)\mathcal{G}_2(\lambda,R)+\mathcal{G}_1(\lambda,R)\mathcal{J}_2(\lambda,R)\Big)\|u-v\|_{L^{\infty}_T\mathfrak{X}},
\end{aligned}
\end{equation*}
where $\mathcal{G}_j$, $\mathcal{J}_j$, $j=1,2$ are defined in \eqref{eqmainTH18} and \eqref{Jdefi}, respectively. We take $T>0$ sufficiently small satisfying \eqref{eqmainTH19}, \eqref{eqmainTH22.0} and such that
\begin{equation}\label{eqmainTH29}
c_3|T|\langle T \rangle^{\lfloor \frac{N}{2}\rfloor+1+m} \Big(\mathcal{J}_1(\lambda,R)\mathcal{G}_2(\lambda,R)+\mathcal{G}_1(\lambda,R)\mathcal{J}_2(\lambda,R)\Big) <1.
\end{equation}
Consequently, $\Phi$ is a contraction on $\mathfrak{X}_{T}(R,\lambda)$. Thus, there exists a unique fixed point solving \eqref{gHartree}. The remaining properties stated in Theorem \ref{mainTHM} are deduced by standard arguments, thus, we omit their proofs.

%%%%%%%%%%%%%%%%%%%%%%%%%%%%%%%%%%%%%%%%%%%%%%%%%%%%%%%%%%%%%%%%%%%%%%%%%%%%%%%%%%%%%%%%%%%%%%%%%%%%%%%%%%%%%%%%%%%%%%%%%%%%%%%%%%%%%%%%%%%%%%%%%%%%%%%%%%%%%%%%%%%%%%%%%%%%%%%%%%%%%%%%%%%%%%%%%%%

\section{Theorem \ref{scatres}}\label{S:scat}

We apply pseudo-conformal transformation in the spirit of \cite{CAZENAVE2011}. Via this transformation a global solution of \eqref{gHartree} corresponds to a solution of the nonautonomous equation
\begin{equation}\label{agHartree}
\left\{\begin{aligned}
    &i v_t+\Delta v+\mu (1-bt)^{N(p-1)-2-\gamma}\Big(\frac{1}{|x|^{N-\gamma}}\ast |v|^{p}\Big)|v|^{p-2}v=0, \, \,  x \in \mathbb{R}^N, \, \, t\in \mathbb{R},\\
    & v(x,0)=v_0(x), 
\end{aligned}\right. 
\end{equation}
where $b,\mu \in \mathbb{C}\setminus\{0\}$. We solve the initial value problem \eqref{agHartree} in the class $\mathfrak{X}$ determined by \eqref{spacedef} and \eqref{normspace}.

\begin{prop}\label{propglobal}
Let $\frac{4}{3}<p<2$, $0<\gamma<\min\{\frac{N(3p-4)}{2p},N(p-1)-1\}$, and $m\in \mathbb{R}^{+}$, $M_0, M \in \mathbb{Z}^{+}$ satisfy \eqref{condmainT1}-\eqref{condmainT3}. Additionally, let $v_0\in \mathfrak{X}$ such that
\begin{equation}\label{lowebound}
\inf_{x \in \mathbb{R}} |v_0(x)|\geq \lambda.
\end{equation}
Then there exists a unique solution $v$ of \eqref{agHartree} with initial data $v_0$ such that
\begin{equation}
v\in C([0,|b|^{-1}];\mathfrak{X}),
\end{equation}
provided that $b>0$ is sufficiently large.
\end{prop}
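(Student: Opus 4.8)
The plan is to repeat verbatim the contraction scheme used in the proof of Theorem \ref{mainTHM}, now applied to the integral operator associated with the nonautonomous problem \eqref{agHartree},
\[
\Phi(v(t)) = e^{it\Delta}v_0 + i\mu\int_0^t (1-bt')^{N(p-1)-2-\gamma}\,e^{i(t-t')\Delta}N(v(t'))\, dt',
\]
where $N(v) = (|x|^{-(N-\gamma)}\ast|v|^p)|v|^{p-2}v$, acting on the complete metric space $\mathfrak{X}_{T}(R,\lambda)$ from \eqref{functspa} with the choice $T = |b|^{-1}$. Since $\langle x\rangle^m \geq 1$, the hypothesis \eqref{lowebound} ensures $\inf_{x}|\langle x\rangle^m v_0(x)| \geq \lambda$, so the datum is admissible exactly as in Theorem \ref{mainTHM}, and one may fix $R$ in terms of $\|v_0\|_{\mathfrak{X}}$ just as there.

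First I would observe that all of the nonlinear estimates of Section \ref{S:main} are pointwise in time: Propositions \ref{propmain1} and \ref{propdiffesolu} bound $N(v)$ and $N(u)-N(v)$ in the relevant $L^{\infty}_T L^r_x$ norms by the quantities $\mathcal{G}_1(\lambda,R)\mathcal{G}_2(\lambda,R)$ and $\big(\mathcal{J}_1\mathcal{G}_2+\mathcal{G}_1\mathcal{J}_2\big)\|u-v\|_{L^{\infty}_T\mathfrak{X}}$ of \eqref{eqmainTH18} and \eqref{Jdefi}, and these are insensitive to the extra time-dependent factor $(1-bt')^{N(p-1)-2-\gamma}$. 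Hence, when estimating the Duhamel term through Lemma \ref{derivexp2} and Proposition \ref{propinfnorm} exactly as in \eqref{eqmainTH15}--\eqref{eqmainTH17} and \eqref{eqmainTH22.1}--\eqref{eqmainTH28}, the only modification is that the scalar factor $|T|$ appearing in front of $\mathcal{G}_1\mathcal{G}_2$ (resp. of $\mathcal{J}_1\mathcal{G}_2+\mathcal{G}_1\mathcal{J}_2$) is replaced by $\int_0^t(1-bt')^{N(p-1)-2-\gamma}\, dt'$.

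The crucial point is that this weight is integrable on $[0,|b|^{-1}]$ precisely under the additional restriction $\gamma<N(p-1)-1$, which forces the exponent to satisfy $N(p-1)-2-\gamma>-1$. Indeed, the substitution $s=1-bt'$ gives, for every $t\in[0,|b|^{-1}]$,
\[
\int_0^{t}(1-bt')^{N(p-1)-2-\gamma}\, dt' \leq \frac{1}{b}\int_0^1 s^{\,N(p-1)-2-\gamma}\, ds = \frac{1}{b\,(N(p-1)-1-\gamma)}.
\]
Thus the factor $|T|$ is effectively traded for a quantity of order $b^{-1}$, while $T=|b|^{-1}\le 1$ keeps every linear time prefactor $\langle T\rangle^{\lfloor \frac{N}{2}\rfloor+1+m}\lesssim 1$. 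Choosing $R$ as in Theorem \ref{mainTHM} and then taking $b$ sufficiently large \emph{in place of} taking $T$ small, the analogues of \eqref{eqmainTH19}, \eqref{eqmainTH22.0} and \eqref{eqmainTH29} all hold; hence $\Phi$ maps $\mathfrak{X}_{|b|^{-1}}(R,\lambda)$ into itself, preserves the lower bound $\inf_{(x,t)}|\langle x\rangle^m v|\ge\lambda/2$, and is a contraction, producing the unique fixed point $v\in C([0,|b|^{-1}];\mathfrak{X})$.

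I expect the main obstacle to be the endpoint behavior at $t=|b|^{-1}$, where $1-bt\to 0$ and the coefficient degenerates, in fact blowing up when $N(p-1)-2-\gamma<0$. This is exactly where the constraint $\gamma<N(p-1)-1$ enters: it yields the integrable singularity displayed above, so that the Duhamel integral remains absolutely convergent and continuous up to and including $t=|b|^{-1}$. This is what upgrades the conclusion to membership in $C([0,|b|^{-1}];\mathfrak{X})$ on the closed interval, rather than merely on $[0,|b|^{-1})$, and it is the only genuinely new ingredient beyond the argument of Theorem \ref{mainTHM}.
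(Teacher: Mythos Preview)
Your proposal is correct and follows essentially the same route as the paper: both run the contraction argument of Theorem \ref{mainTHM} on $\mathfrak{X}_{|b|^{-1}}(R,\lambda)$ for the Duhamel operator of \eqref{agHartree}, replace the factor $|T|$ in \eqref{eqmainTH19}, \eqref{eqmainTH22.0}, \eqref{eqmainTH29} by $\int_0^{|b|^{-1}}(1-bt')^{N(p-1)-2-\gamma}\,dt'=\frac{|b|^{-1}}{N(p-1)-1-\gamma}$, and then close by taking $b$ large. Your explicit remark on the endpoint $t=|b|^{-1}$ and the role of the constraint $\gamma<N(p-1)-1$ in making the singularity integrable is a useful clarification that the paper leaves implicit.
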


\begin{proof}
The proof is similar to that of Theorem \ref{mainTHM}. We will apply the contraction mapping principle to the integral operator defined by \eqref{agHartree}, that is,
\begin{equation}\label{integralmap2}
\Phi_2(v(t))=e^{it \Delta }v_0+i\mu\int_0^t (1-b t')^{N(p-1)-2-\gamma} e^{i(t-t')\Delta} N(v(t'))\, dt',
\end{equation}
acting on the space $\mathfrak{X}_{|b|^{-1}}(R,\lambda)$ defined by \eqref{functspa}.
\\ \\
Following the same arguments leading to the inequalities \eqref{eqmainTH15}, \eqref{eqmainTH16} and \eqref{eqmainTH17}, there exists a constant $c>0$ such that 
\begin{equation}\label{scatteq1}
\begin{aligned}
\|\Phi_2(u)\|_{L^{\infty}_T\mathfrak{X}}\leq & c\langle |b|^{-1} \rangle^{\lfloor \frac{N}{2} \rfloor+1+m}\big(\sum_{|\alpha|\leq \lfloor \frac{N}{2} \rfloor}\|\langle x \rangle^{m}\partial^{\alpha}v_0\|_{L^{\infty}_x}+\sum_{\lfloor \frac{N}{2} \rfloor<|\alpha|\leq M}\|\langle x \rangle^m \partial^{\alpha} v_0\|_{L^2_x}\\
& \hspace{4cm}+\|J^{M+M_0-N}v_0\|_{L^2_x}\big)\\
&+c|b|^{-1}\langle |b|^{-1} \rangle^{\lfloor \frac{N}{2} \rfloor+1+m}\mathcal{G}_1(\lambda,R)\mathcal{G}_2(\lambda,R),
\end{aligned}
\end{equation}  
where $\mathcal{G}_1$ and $\mathcal{G}_2$ are defined by \eqref{eqmainTH18}, and we have used 
\begin{equation}
\int_0^{|b|^{-1}} |1-b t'|^{N(p-1)-2-\gamma} dt'=\frac{|b|^{-1}}{N(p-1)-1-\gamma}.
\end{equation}
Next, by \eqref{eqmainTH20} and arguing as in \eqref{eqmainTH22}, there exists a constant $c_1$ such that 
\begin{equation}
\begin{aligned}
|\langle x \rangle^m \Phi_2(v)(x,t)|\geq & \lambda-c_1|b|^{-1}\langle |b|^{-1} \rangle^{\lfloor \frac{N}{2} \rfloor+1+m}\big(\sum_{|\alpha|\leq \lfloor \frac{N}{2}\rfloor}\|\langle x \rangle^{m}\partial^{\alpha}v_0\|_{L^{\infty}_x}\\
&\hspace{3cm}+\sum_{\lfloor \frac{N}{2} \rfloor< |\alpha|\leq M}\|\langle x \rangle^m\partial^{\alpha}v_0\|_{L^2_x}+\|J^{M+M_0-N}v_0\|_{L^2_x} \big)\\
&-c_1|b|^{-1}\langle |b|^{-1} \rangle^{\lfloor \frac{N}{2} \rfloor+1+m}\mathcal{G}_1(\lambda,R)\mathcal{G}_2(\lambda,R).
\end{aligned}  
\end{equation}
Similarly, the same reasoning in the deduction of \eqref{eqmainTH29} assures the existence of a constant $c_2>0$, for which 
\begin{equation}
\begin{aligned}
\|\Phi_2(u)-\Phi_2(v)\|_{L^{\infty}_T \mathfrak{X}}\leq c_2|b|^{-1}\langle |b|^{-1} \rangle^{\lfloor \frac{N}{2}\rfloor+1+m} \Big(\mathcal{J}_1(\lambda,R)\mathcal{G}_2(\lambda,R)+\mathcal{G}_1(\lambda,R)\mathcal{J}_2(\lambda,R)\Big)\|u-v\|_{L^{\infty}_T\mathfrak{X}},
\end{aligned}
\end{equation}
where $\mathcal{J}_1$ and $\mathcal{J}_2$ are defined in \eqref{Jdefi}. Recalling the constant $c>0$ in \eqref{scatteq1}, we set
\begin{equation}\label{initialdatahy}
R=2c\big(\sum_{|\alpha|\leq \lfloor \frac{N}{2} \rfloor}\|\langle x \rangle^{m}\partial^{\alpha}v_0\|_{L^{\infty}_x}+\sum_{\lfloor \frac{N}{2} \rfloor<|\alpha|\leq M}\|\langle x \rangle^m \partial^{\alpha} v_0\|_{L^2_x}+\|J^{M+M_0-N}v_0\|_{L^2_x}\big).
\end{equation}
Thus, we take $b>0$ large such that
\begin{equation}
\begin{aligned}
&\frac{1}{2}\langle |b|^{-1} \rangle^{\lfloor \frac{N}{2} \rfloor+1+m}+c|b|^{-1}\langle |b|^{-1} \rangle^{\lfloor \frac{N}{2} \rfloor+1+m}R^{-1}\mathcal{G}_1(\lambda,R)\mathcal{G}_2(\lambda,R)<1, \\
&\lambda-\frac{c_1}{2c}|b|^{-1}\langle |b|^{-1}\rangle^{\lfloor \frac{N}{2} \rfloor+1+m}R-c_1|b|^{-1}\langle |b|^{-1} \rangle^{\lfloor \frac{N}{2} \rfloor+1+m}\mathcal{G}_1(\lambda,R)\mathcal{G}_2(\lambda,R)>\frac{\lambda}{2},\\
&c_2|b|^{-1}\langle |b|^{-1} \rangle^{\lfloor \frac{N}{2}\rfloor+1+m} \Big(\mathcal{J}_1(\lambda,R)\mathcal{G}_2(\lambda,R)+\mathcal{G}_1(\lambda,R)\mathcal{J}_2(\lambda,R)\Big)<1.
\end{aligned}
\end{equation}
These conditions establish that $\Phi: \mathfrak{X}_{|b|^{-1}}(R,\lambda) \rightarrow \mathfrak{X}_{|b|^{-1}}(R,\lambda)$ is a contraction. Therefore, it has a fixed point, which is a solution of \eqref{agHartree}.
\end{proof}

Let us now obtain a global solution of \eqref{gHartree} with initial data $e^{i\frac{b|x|^2}{4}}v_0$, $v_0 \in \mathfrak{X}$ satisfying \eqref{lowebound}. As in Proposition \ref{propglobal}, let $b>0$ sufficiently large such that there exists a unique solution $v\in C([0,b^{-1}];\mathfrak{X})$ of \eqref{agHartree}. We define
\begin{equation}\label{defiglobalsol}
\begin{aligned}
u(x,t)=(1+bt)^{-\frac{N}{2}}e^{i\frac{b|x|^2}{4(1+bt)}}v\Big(\frac{x}{1+bt},\frac{t}{1+bt}\Big)
\end{aligned}
\end{equation}
for any $0\leq t <\infty$ and $x \in \mathbb{R}^N$. Recalling that $0\leq s <\frac{2m-N}{2}$, by Proposition \ref{derivexp} and similar arguments as in proof of Lemma \ref{derivexp2}, we find that $u$ solves \eqref{gHartree} with
\begin{equation}\label{classofsolu}
u \in C([0,\infty); \, H^s(\mathbb{R}^N))\cap L^{\infty}\big(\mathbb{
R}^N\times(0,\infty);\langle x \rangle^{\frac{N}{2}}\, dx \, dt\big),
\end{equation}
and $u(x,0)=e^{i\frac{b|x|^2}{4}}v_0$. We emphasize that the condition on the regularity $0\leq s <\frac{2m-N}{2}$ assures that $v\in C([0,b^{-1});L^{2}(\mathbb{R}^N;\langle x \rangle^{2s} \, dx))$, which is required to deduce \eqref{classofsolu}. Next, we use the following identity ((3.8) in \cite{cazenave_weissler})
\begin{equation}
e^{-it\Delta}u(x,t)=e^{\frac{ib|x|^2}{4}}e^{-\frac{t}{1+bt}\Delta}v\Big(x,\frac{t}{1+bt}\Big).
\end{equation}
Thus, we define
\begin{equation}
u_{+}=e^{\frac{ib|x|^2}{4}}e^{-\frac{1}{b}\Delta}v\Big(\frac{1}{b}\Big).
\end{equation}
We claim
\begin{equation}\label{claimscat}
e^{-it\Delta}u(t)\xrightarrow[t \to \infty]{} u_{+} \, \, \text{ in } \, \,  H^s(\mathbb{R}^N).
\end{equation}
Indeed, in view of \eqref{integralmap2}, we find
\begin{equation}
\begin{aligned}
e^{-it\Delta}u(t)-u_{+}=-i\mu e^{\frac{ib|x|^2}{4}}\int_{\frac{t}{1+bt}}^{\frac{1}{b}}(1-bt')^{N(p-1)-2-\gamma}e^{-it'\Delta}N(v(t'))\, dt'.
\end{aligned}
\end{equation}
By Theorem \ref{TheoSteDer}, property \eqref{prelimneq} and Lemma \ref{derivexp2}, we find
\begin{equation}\label{eqscat1}
\begin{aligned}
\|e^{-it\Delta}u(t)-u_{+}\|_{H^s}\lesssim & \|e^{-it\Delta}u(t)-u_{+}\|_{L^2}+\|\mathcal{D}^{s}\big(e^{-it\Delta}u(t)-u_{+}\big)\|_{L^2} \\
\lesssim &\|\int_{\frac{t}{1+bt}}^{\frac{1}{b}}(1-bt')^{N(p-1)-2-\gamma}e^{-it'\Delta}N(v(t'))\, dt'\|_{H^s}\\
&+\|\mathcal{D}^s\big(e^{\frac{ib|x|^2}{4}}\big)\int_{\frac{t}{1+bt}}^{\frac{1}{b}}(1-bt')^{N(p-1)-2-\gamma}e^{-it'\Delta}N(v(t'))\, dt'\|_{L^2}\\
\lesssim & \Big(\int_{\frac{t}{1+bt}}^{\frac{1}{b}}|1-bt'|^{N(p-1)-2-\gamma} dt'\Big)\sup_{t\in [0,|b|^{-1}]}\|N(v(t))\|_{H^s}\\
&+\langle b\rangle^{s}\|\int_{\frac{t}{1+bt}}^{\frac{1}{b}}(1-bt')^{N(p-1)-2-\gamma}\big( \langle x \rangle^s e^{-it'\Delta}N(v(t'))\big)\, dt'\|_{L^2}.
\end{aligned}
\end{equation}
Since $s<m<M+M_0-N$, by \eqref{eqmainTH14}, we get 
\begin{equation}\label{eqscat2}
\begin{aligned}
\Big(\int_{\frac{t}{1+bt}}^{\frac{1}{b}}|1-bt'|^{N(p-1)-2-\gamma} dt'\Big)&\sup_{t\in [0,|b|^{-1}]}\|N(v(t))\|_{H^s} \\
&\lesssim \Big(\int_{\frac{t}{1+bt}}^{\frac{1}{b}}|1-bt'|^{N(p-1)-2-\gamma} dt'\Big)\mathcal{G}_1(\lambda,R)\mathcal{G}_2(\lambda,R)\xrightarrow[t \to \infty]{} 0, 
\end{aligned}
\end{equation}
where $R$ is given by \eqref{initialdatahy}, and $\mathcal{G}_1$ and $\mathcal{G}_2$ as in \eqref{eqmainTH18}. Next, we apply Proposition \ref{propinfnorm} to get
\begin{equation}\label{eqscat3}
\begin{aligned}
\|\int_{\frac{t}{1+bt}}^{\frac{1}{b}}(1-bt')^{N(p-1)-2-\gamma}&\big( \langle x \rangle^s  e^{-it'\Delta}N(v(t'))\big)\, dt'\|_{L^2}\\
\lesssim & \|\langle x \rangle^{s-m}\|_{L^{2}}\int_{\frac{t}{1+bt}}^{\frac{1}{b}}(1-bt')^{N(p-1)-2-\gamma}\|\langle x \rangle^m e^{-it'\Delta}N(v(t'))\|_{L^{\infty}_x}\, dt' \\
\lesssim & \int_{\frac{t}{1+bt}}^{\frac{1}{b}}(1-bt')^{N(p-1)-2-\gamma}\langle t' \rangle^{\lfloor \frac{N}{2} \rfloor+m+1}\big(\sum_{|\alpha|\leq \lfloor \frac{N}{2} \rfloor}\|\langle x \rangle^m N(v(t'))\|_{L^{\infty}_x} \\
&+ \sum_{\lfloor \frac{N}{2} \rfloor<|\alpha|\leq M}\|\langle x \rangle^m\partial^{\alpha}N(v(t'))\|_{L^{\infty}_TL^2_x}+\|J^{M+M_0-N}N(v(t'))\|_{L^{\infty}_TL^2_x} \, \big)dt'\\
\lesssim & \Big(\int_{\frac{t}{1+bt}}^{\frac{1}{b}}(1-bt')^{N(p-1)-2-\gamma}\langle t' \rangle^{\lfloor \frac{N}{2} \rfloor+m+1} dt'\Big) \mathcal{G}_1(\lambda,R)\mathcal{G}_2(\lambda,R)\xrightarrow[t \to \infty]{} 0,
\end{aligned}
\end{equation}
where we have also used that $s<m-\frac{N}{2}$. Gathering \eqref{eqscat2} and \eqref{eqscat3} into \eqref{eqscat1}, we deduce the desired limit \eqref{claimscat}. Finally, since $\sup_{0<t<|b|^{-1}} \|v(t)\|_{L^{\infty}}<\infty$, by \eqref{defiglobalsol}, it follows that $\sup_{t>0} \, (1+t)^{\frac{N}{2}}\|u(t)\|_{L^{\infty}}<\infty$. The proof of Theorem \ref{scatres} is complete.

\begin{rem}
\normalfont The arguments in \eqref{eqscat3} also establish the following limit
\begin{equation}
\lim_{\substack{t\to \infty \\ t>0}} \|\langle x \rangle^s \big(e^{-it\Delta}u(t)-u_{+}\big)\|_{L^2}=0.
\end{equation}
\end{rem}

%%%%%%%%%%%%%%%%%%%%%%%%%%%%%%%%%%%%%%%%%%%%%%%%%%%%%%%%%%%%%%%%%%%%%%%%%%%%%%%%%%%%%%%%%%%%%%%%%%%%%%%%%%%%%%%%%%%%%%%%%%%%%%%%%%%%%%%%%%%%%%%%%%%%%%%%%%%%%%%%%%%%%%%%%%%%%%%%%%%%%%%%%%%%%%%%%%%%%%%%%%%%%%%%%%%%%%%%%%%%%%%%%%%%%%%%%%%%%%%%%%%%%%%%%%%%%%%%%%%%%%%%%%%%%%%%%%%%%%%%%%%%%%%%%%%%%%%%%

\section{Blow-up criterion} \label{S:blowup}

This section aims to establish Theorem \ref{blowupcriteria} and Corollary \ref{blowupcriteriacor}. We assume that $\max\{\frac{N+2}{N},\frac{4}{3}\}<p<2$, $0<\gamma<\min\{N(p-1)-2,\frac{(N+2)(p-1)-2}{2},\frac{N(3p-4)}{2p}\}$ and $\mu >0$. We consider $m \in \mathbb{R}^{+}$ as in the statement of Theorem \ref{blowupcriteria}, and $M_0$ and $M$ satisfying \eqref{condmainT2} and \eqref{condmainT3}, respectively. Then, we set the space $\mathfrak{X}$ by \eqref{spacedef} and \eqref{normspace}.

Let $v_0\in \mathfrak{X}$ verify \eqref{condi2} and $u \in C([0,T];\mathfrak{X})$ be the corresponding solution of \eqref{gHartree} with initial data $u_0=e^{\frac{ib|x|^2}{4}}v_0$ provided by Theorem \ref{mainTHM}. We first observe that
\begin{equation}\label{redeq0}
u\in C([0,T]; L^2(\mathbb{R}^N; |x|^2\, dx)),
\end{equation}
which is a direct consequence of the following computation
\begin{equation*}
\|\langle x \rangle u\|_{L^{\infty}_T L^2_x}\leq \|\langle x \rangle^{m} u\|_{L^{\infty}_{T,x}}\|\langle x \rangle^{-(m-1)}\|_{L^2_x}\lesssim \|\langle x \rangle^{m} u\|_{L^{\infty}_{T,x}},
\end{equation*}
and the fact that $m>\frac{N}{2}+1$. We remark that the condition $0<\gamma< \frac{(N+2)(p-1)-2}{2}$ in Theorem \ref{blowupcriteria} assures that $\frac{N+2}{2}<\frac{N-2\gamma}{2(2-p)}$, which is implicitly required in \eqref{condiblow}. Additionally, it follows
\begin{equation}\label{redeq0.1}
\Big(\frac{1}{|\cdot|^{N-\gamma}}\ast |u|^p \Big)|u|^{p}\in L^{\infty}([0,T];L^{1}(\mathbb{R}^N)).
\end{equation}
Indeed, we apply H\"older's inequality and Hardy-Littlewood-Sobolev inequality to get
\begin{equation*}
\begin{aligned}
\|\big(|\cdot|^{-(N-\gamma)}\ast |u|^p \big)|u|^{p}\|_{L^{\infty}_TL^1_x}\lesssim &  \||\cdot|^{-(N-\gamma)}\ast |u|^p  \|_{L^{\infty}_T L^{\frac{2N}{N-2\gamma}}_x}\||u|^{p}\|_{L^{\infty}_TL^{\frac{2N}{N+2\gamma}}_x}\\
\lesssim &  \||u|^p  \|_{L^{\infty}_T L^2_x}\||u|^{p}\|_{L^{\infty}_TL^{\frac{2N}{N+2\gamma}}_x}\\
\lesssim &  \|\langle x \rangle^{-mp} \|_{L^2}\|\langle x \rangle^{-mp}\|_{L^{\frac{2N}{N+2\gamma}}}\|\langle x \rangle^m u\|_{L^{\infty}_{T,x}}^{2p},
\end{aligned}
\end{equation*}
where the restriction \eqref{condiblow} implies $\langle x \rangle^{-mp} \in L^2(\mathbb{R}^N)\cap L^{\frac{2N}{N+2\gamma}}(\mathbb{R}^N)$. Next, we recall the variance \eqref{variance}
\begin{equation*}
V(t)=\int_{\mathbb{R}^N} |x|^2|u(x,t)|^2 \, dx,
\end{equation*}
and the energy \eqref{energycon}
\begin{equation*}
E[u(t)]=\frac{1}{2}\int_{\mathbb{R}^N} |\nabla u(x,t)|^2\, dx-\frac{\mu}{2p}\int_{\mathbb{R}^N}\Big(\frac{1}{|\cdot|^{N-\gamma}}\ast |u(\cdot,t)|^{p}\Big)(x,t)|u(x,t)|^p\, dx.
\end{equation*}
Notice that \eqref{redeq0} and \eqref{redeq0.1} establish the validity of $V(t)$ and $E[u(t)]$. Then, we deduce that the above solution $u$ of \eqref{gHartree} satisfies the following virial identities
\begin{equation}\label{redeq0.2}
V_t(t)=4\operatorname{Im}\int_{\mathbb{R}^N} \overline{u} (x \cdot \nabla u)\, dx,
\end{equation}
and
\begin{equation}\label{redeq0.3}
\begin{aligned}
V_{tt}(t)&=16E[u(t)]-8s_c\frac{(p-1)}{p}\mu \int_{\mathbb{R}^{N}}\big(|\cdot|^{-(N-\gamma)}\ast |u|^p\big)(x,t)|u|^p(x,t)\, dx\\
&=16(s_c(p-1)+1)E[u(t)]-8s_c(p-1)\|\nabla u\|_{L^2}^2,
\end{aligned}
\end{equation}
also, we recall the critical index \eqref{critiindex},
\begin{equation*}
s_c=\frac{N}{2}-\frac{\gamma+2}{2(p-1)}.
\end{equation*}
In particular, $s_c>0$, if and only if $0<\gamma<N(p-1)-2$, which is given by our assumption $p>\frac{N+2}{N}$. 

Consequently, \eqref{redeq0.2} and \eqref{redeq0.3} allow us to follow the same arguments as in the proof of \cite[Theorem 1.3]{AndySvetlan2} for the gHartree equation \eqref{gHartree} with $p\geq 2$ to deduce Theorem \ref{blowupcriteria}.

Next, we infer some consequences of Theorem \ref{blowupcriteria}. Since we are considering solutions of \eqref{gHartree} with initial condition $u_0=e^{\frac{ib|x|^2}{4}}v_0$, we  deduce the following identities
\begin{equation}\label{eqblowupcondi1}
\begin{aligned}
V(0)&=\|xu_0\|_{L^2}^2=\|xv_0\|_{L^2}^2,\\
V_t(0)&=4\operatorname{Im} \int_{\mathbb{R}^N} \overline{v_0}(x\cdot \nabla v_0)\, dx +2b\int |x|^2|v_0|^2 \, dx, \\
E[u_0]&=E[v_0]+\frac{b}{2}\operatorname{Im} \int_{\mathbb{R}^N} \overline{v_0}(x \cdot \nabla v_0)\, dx+\frac{|b|^2}{8}\int_{\mathbb{R}^N}|x|^2|v_0|^2 \, dx.
\end{aligned}
\end{equation}
Notice that $E[u_0]\to \infty$ and $V_t(0) \to -\infty$ as $b \to -\infty$. Let us rewrite some conditions in Theorem \ref{blowupcriteria} in terms of the equations in \eqref{eqblowupcondi1}. We have $\frac{E[u_0]V(0)}{(\omega_c M[u_0])^2} \geq 1$ if and only if
\begin{equation}\label{eqlbowupcondi2}
\begin{aligned}
E[v_0]+\frac{b}{2}\operatorname{Im} \int_{\mathbb{R}^N} \overline{v_0}(x \cdot \nabla v_0)\, dx+\frac{|b|^2}{8}\int_{\mathbb{R}^N}|x|^2|v_0|^2 \, dx-\frac{(\omega_c M[v_0])^2}{\|xv_0\|_{L^2}^2}\geq 0.
\end{aligned}
\end{equation}
Assuming that $\frac{\partial_{t}V(0)}{\omega_c M[u_0]}>0$ and $\frac{E[u_0]V(0)}{(\omega_c M[u_0])^2} <1 $, the inequality \eqref{blowupcondi} is equivalent to 
\begin{equation}\label{eqblowupcondi3}
\begin{aligned}
\frac{k_{c}|\partial_{t}V(0)|^2-32k_{c}E[u_0]V(0)+32(1+k_{c})(\omega_c M[v_0])^2}{k_{c}(\omega_c M[v_0])^2}<\frac{32(\omega_c M[v_0])^{2k_{c}}}{k_{c}(E[u_0]V(0))^{k_{c}}}.
\end{aligned}
\end{equation}
Likewise,  if $\frac{\partial_{t}V(0)}{\omega_c M[u_0]}<0$ and $\frac{E[u_0]V(0)}{(\omega_c M[u_0])^2} \geq 1$, \eqref{blowupcondi} is determined by
\begin{equation}\label{eqblowupcondi3.1}
\begin{aligned}
\frac{k_{c}|\partial_{t}V(0)|^2-32k_{c}E[u_0]V(0)+32(1+k_{c})(\omega_c M[v_0])^2}{k_{c}(\omega_c M[v_0])^2}>\frac{32(\omega_c M[v_0])^{2k_{c}}}{k_{c}(E[u_0]V(0))^{k_{c}}}.
\end{aligned}
\end{equation}
We remark that
\begin{equation}\label{eqblowupcondi4}
\begin{aligned}
|\partial_{t}V(0)|^2-32E[u_0]V(0)=&16\Big( \operatorname{Im} \int_{\mathbb{R}^N} \overline{v_0}(x \cdot \nabla v_0)\, dx \Big)^2-32E[v_0]\|xv_0\|_{L^2}^2.
\end{aligned}
\end{equation}
Now we assume that $v_0$ is a real-valued function. Then,
\begin{equation}\label{redeq1} 
\frac{E[u_0]V(0)}{(\omega_c M[u_0])^2}=\frac{\big(8E[v_0]+|b|^2\|xv_0\|_{L^2}^2\big)\|xv_0\|_{L^2}^2}{8(\omega_c M[v_0])^2},
\end{equation}
\begin{equation}\label{redeq2}
\frac{\partial_{t}V(0)}{\omega_c M[u_0]}=\frac{2b\|xv_0\|_{L^2}^2}{\omega_c M[v_0]},
\end{equation}
and
\begin{equation}\label{condionv1}
\begin{aligned}
\frac{E[u_0]V(0)}{(\omega_c M[u_0])^2}\geq 1 \quad \text{ if and only if } \quad |b|^2
\geq \frac{8(\omega_c M[v_0])^2-8E[v_0]\|xv_0\|_{L^2}^2}{\|xv_0\|_{L^2}^4}.
\end{aligned}
\end{equation}
Notice that $E[u_0]>0$, if $|b|^2 >\frac{-8E[v_0]}{\|xv_0\|_{L^2}^2}$. Now, we divide our analysis according to the sign of $b\neq 0$.
\begin{itemize}
\item[I)] \underline{Assume $b>0$}. Here, $\frac{\partial_{t}V(0)}{\omega_c M[u_0]}>0$, and thus, by \eqref{condionv1}, we can only verify the hypothesis of Theorem \ref{blowupcriteria} when
\begin{equation}\label{redeq2.1}
|b|< \frac{2\sqrt{2}}{\|xv_0\|_{L^2}^2}\Big((\omega_c M[v_0])^2-E[v_0]\|xv_0\|_{L^2}^2\Big)^{1/2}.
\end{equation}
This forces us to assume
\begin{equation}
\frac{E[v_0]\|xv_0\|_{L^2}^2}{(M[v_0])^2} < \omega_c^2.
\end{equation}
Hence, in virtue of \eqref{eqblowupcondi3} and \eqref{eqblowupcondi4}, we find
\begin{equation}\label{redeq2.2}
\begin{aligned}
|b|^2< \frac{8}{\|xv_0\|_{L^2}^4}\bigg(\frac{(\omega_c M[v_0])^{2k_{c}+2}}{\big((1+k_{c})(\omega_c M[v_0])^2-k_{c}E[v_0]\|xv_0\|_{L^2}^2\big)}\bigg)^{1/k_{c}}-\frac{8E[v_0]}{\|xv_0\|_{L^2}^2}.
\end{aligned}
\end{equation}
If the right-hand side of \eqref{redeq2.2} is positive, Theorem \ref{blowupcriteria} assures the existence of two numbers $b_1>b_0\geq 0$ such that the solution $u$ of \eqref{gHartree} associated to $u_0=e^{\frac{ib|x|^2}{2}}v_0$ determined by Theorem \ref{mainTHM} blows up in finite time for all $b_0<b< b_1$. In particular, if $E[v_0]>0$, one can take $b_0=0$.
\item[II)] \underline{Assume $b<0$}. In this case, \eqref{eqblowupcondi3.1} and \eqref{eqblowupcondi4} impose the condition 
\begin{equation}\label{condionv1.1}
\frac{E[v_0]\|xv_0\|_{L^2}^2}{(\omega_c M[v_0])^2}<\frac{1+k_{c}}{k_{c}}.
\end{equation} 
Hence, Theorem \ref{blowupcriteria} and \eqref{condionv1.1} yield the existence of $b_1\leq 0$ such that for all $b \leq b_1$ the solution $u$ of \eqref{gHartree} with initial condition $u_0=e^{\frac{ib|x|^2}{2}}v_0$ blows up in finite time.
\end{itemize}
Collecting the conclusion in a) and b), we deduce Corollary \ref{blowupcriteriacor}.

\begin{rem}\label{remblowup}
\normalfont \begin{itemize}
\item[(i)] The same conclusion in parts I) and II) above are valid assuming the weaker hypothesis $ \operatorname{Im} \int_{\mathbb{R}^N} \overline{v_0}(x \cdot \nabla v_0)\, dx =0$.
\item[(ii)] Assuming
\begin{equation}
2(1+k_{c})(\omega_c M[v_0])^2+k_c\Big( \operatorname{Im} \int_{\mathbb{R}^N} \overline{v_0}(x \cdot \nabla v_0)\, dx \Big)^2-2k_cE[v_0]\|xv_0\|_{L^2}^2>0,
\end{equation}
the equations \eqref{eqblowupcondi1}-\eqref{eqblowupcondi4}, and Theorem \ref{blowupcriteria} assure that there exists $b_1\leq 0$ such that for any $b \leq b_1$, the solution $u(t)$ of \eqref{gHartree} associated to $u_0=e^{\frac{ib|x|^2}{2}}v_0$ determined by Theorem \ref{mainTHM} blows up in finite time.
\end{itemize}
\end{rem}

%%%%%%%%%%%%%%%%%%%%%%%%%%%%%%%%%%%%%%%%%%%%%%%%%%%%%%%%%%%%%%%%%%%%%%%%%%%%%%%%%%%%%%%%%%%%%%%%%%%%%%%%%%%%%%%%%%%%%%%%%%%%%%%%%%%%%%%%%%%%%%%%%%%%%%%%%%%%%%%%%%%%%%%%%%%%%%%%%%%%%%%%%%%%%%%%%%%%%%%%%%%%%%%%%%%%%%%%%%%%%%%%%%%%%%%%%%%%%%%%%%%%%%%%%%%%%%%%%%
 
\bibliographystyle{abbrv}
	%	\nocite{*}
\bibliography{bibli}

\begin{thebibliography}{10}

\bibitem{AndySvetlan2}
A.~K. Arora and S.~Roudenko.
\newblock Well-posedness and blow-up properties for the generalized {H}artree
  equation.
\newblock {\em J. Hyperbolic Diff. Eq.}, 17(4):727--763, 2020.

\bibitem{2019gHAnudRoud}
A.~K. {Arora} and S.~{Roudenko}.
\newblock {Global behavior of solutions to the focusing generalized Hartree
  equation}.
\newblock {\em Michigan Math J., forthcoming}, 2021.

\bibitem{ARY2020}
A.~K. Arora, S.~Roudenko, and K.~Yang.
\newblock On the focusing generalized {H}artree equations.
\newblock {\em Math. in Appl. Sci. and Eng.}, 1(4):381--400, December 2020.

\bibitem{CAZENAVE2011}
T.~Cazenave, D.~Fang, and Z.~Han.
\newblock {Continuous dependence for NLS in fractional order spaces}.
\newblock {\em Ann. Inst. H. Poincar\'{e} Anal. Non Lin\'{e}aire},
  28(1):135--147, 2011.

\bibitem{CazHauNaum2020}
T.~Cazenave, Z.~Han, and I.~Naumkin.
\newblock {Asymptotic behavior for a dissipative nonlinear Schr\"odinger
  equation}.
\newblock {\em Nonlinear Anal.}, 205:112243, 2021.

\bibitem{CazNaum2016}
T.~Cazenave and I.~Naumkin.
\newblock {Local existence, global existence, and scattering for the nonlinear
  Schrödinger equation}.
\newblock {\em Comm. Contemp. Math.}, 19(02):1650038, 20 pp, 2017.

\bibitem{CazNaum2018}
T.~Cazenave and I.~Naumkin.
\newblock {Modified scattering for the critical nonlinear Schrödinger
  equation}.
\newblock {\em J. Funct. Anal.}, 274(2):402--432, 2018.

\bibitem{cazenave_weissler}
T.~Cazenave and F.~B. Weissler.
\newblock {The structure of solutions to the pseudo-conformally invariant
  nonlinear Schrödinger equation}.
\newblock {\em Proc. Royal Soc. Edinburgh: Sect. A Math.},
  117(3-4):251--–273, 1991.

\bibitem{Rieszweigh2}
D.~Cruz-Uribe and K.~Moen.
\newblock {One and two weight norm inequalities for Riesz potentials}.
\newblock {\em Illinois J. Math.}, 57(1):295--323, 2013.

\bibitem{JavierHarmo}
J.~Duoandikoetxea.
\newblock {\em Fourier Analysis}, volume~29 of {\em Grad. Stud. Math.}
\newblock Amer. Math. Soc., 2001.

\bibitem{DucRoud}
T.~Duyckaerts and S.~Roudenko.
\newblock {Going Beyond the Threshold: Scattering and Blow-up in the Focusing
  NLS Equation}.
\newblock {\em Comm. Math. Phys.}, 334(3):1573–--1615, 2015.

\bibitem{Holmer_2010}
J.~Holmer, R.~Platte, and S.~Roudenko.
\newblock {Blow-up criteria for the 3D cubic nonlinear Schrödinger equation}.
\newblock {\em Nonlinearity}, 23(4):977--1030, 2010.

\bibitem{LACEY}
M.~T. Lacey, K.~Moen, C.~P\'erez, and R.~H. Torres.
\newblock {Sharp weighted bounds for fractional integral operators}.
\newblock {\em J. Funct. Anal.}, 259(5):1073--1097, 2010.

\bibitem{LinaresMiyaGus}
F.~Linares, H.~Miyazaki, and G.~Ponce.
\newblock {On a class of solutions to the generalized KdV type equation}.
\newblock {\em Comm. Contemp. Math.}, 21(07):1850056, 2019.

\bibitem{LINARES2020}
F.~Linares, A.~Pastor, and J.~{Drumond Silva}.
\newblock {Dispersive blow-up for solutions of the Zakharov-Kuznetsov
  equation}.
\newblock {\em Ann. Henri Poincar\'{e} Anal. Non Lin\'{e}aire}, 2020.

\bibitem{LinaresI}
F.~Linares, G.~Ponce, and G.~Santos.
\newblock {On a Class of Solutions to the Generalized Derivative Schrödinger
  Equations}.
\newblock {\em Acta. Math. Sin.-English Ser}, 35(06):1057–--1073, 2019.

\bibitem{LinaresII}
F.~Linares, G.~Ponce, and G.~Santos.
\newblock {On a class of solutions to the generalized derivative Schrödinger
  equations II}.
\newblock {\em J. Diff. Eq.}, 267(1):97--118, 2019.

\bibitem{PavelI}
P.~M. Lushnikov.
\newblock {Collapse of Bose-Einstein condensates with dipole-dipole
  interactions}.
\newblock {\em Phys. Rev. A}, 66:051601, 2002.

\bibitem{PavelII}
P.~M. Lushnikov.
\newblock {Collapse and stable self-trapping for Bose-Einstein condensates with
  $1/{r}^{b}$-type attractive interatomic interaction potential}.
\newblock {\em Phys. Rev. A}, 82:023615, Aug 2010.

\bibitem{Miyazaki2020}
H.~Miyazaki.
\newblock {Lower bound for the lifespan of solutions to the generalized KdV
  equation with low degree of nonlinearity}.
\newblock {\em Advanced Studies in Pure Mathematics}, 85:303--313, 2020.

\bibitem{MOROZ}
V.~Moroz and J.~{Van Schaftingen}.
\newblock {Groundstates of nonlinear Choquard equations: Existence, qualitative
  properties and decay asymptotics}.
\newblock {\em J. Funct. Anal.}, 265(2):153--184, 2013.

\bibitem{MuckApcond}
B.~Muckenhoupt.
\newblock {Weighted Norm Inequalities for the Hardy Maximal Function}.
\newblock {\em Trans. Amer. Math. Soc.}, 165:207--226, 1972.

\bibitem{Rieszweigh}
B.~Muckenhoupt and R.~L. Wheeden.
\newblock {Weighted Norm Inequalities for Fractional Integrals}.
\newblock {\em Trans. Amer. Math. Soc.}, 192:261--274, 1974.

\bibitem{NahasPo}
J.~Nahas and G.~Ponce.
\newblock {On the Persistent Properties of Solutions to Semi-Linear
  Schrödinger Equation}.
\newblock {\em Comm. PDE}, 34(10):1208--1227, 2009.

\bibitem{SteinThe}
E.~M. Stein.
\newblock {The characterization of functions arising as potentials}.
\newblock {\em Bull. Amer. Math. Soc.}, 67(1):102--104, 1961.

\bibitem{stein1993harmonic}
E.~M. Stein.
\newblock {\em {Harmonic Analysis: Real-variable Methods, Orthogonality, and
  Oscillatory Integrals}}.
\newblock Monographs in harmonic analysis. Princeton University Press, 1993.

\bibitem{SGU}
E.~M. Stein and G.~Weiss.
\newblock {Fractional Integrals on $n$-dimensional Euclidean Space}.
\newblock {\em J. Math. Mechanics}, 7(4):503--514, 1958.

\bibitem{YRZ2020}
K.~Yang, S.~Roudenko, and Y.~Zhao.
\newblock Stable blow-up dynamics in the ${L}^2$-critical and
  ${L}^2$-supercritical generalized {H}artree equations.
\newblock {\em Stud. Appl. Math.}, 145(4):647--695, 2020.

\end{thebibliography}

\end{document}